\newcommand{\tn}[1]{\textnormal{#1}}
\renewcommand{\d}{\ \mathrm{d}}
\renewcommand{\div}{\mathrm{div}}
\newcommand{\card}{\mathrm{card}}
\newcommand{\N}{\mathbb{N}}
\newcommand{\R}{\mathbb{R}}
\newcommand{\RCupInf}{\R \cup \{\infty\}}
\newcommand{\RCupMInf}{\R \cup \{-\infty\}}
\newcommand{\SetNodes}{\mathcal{X}}
\newcommand{\SetEdges}{\mathcal{S}}
\newcommand{\SetKPre}{\mathcal{K}_{\tn{pre}}}
\newcommand{\SetK}{\mathcal{K}}
\newcommand{\SetKSmall}{K}
\newcommand{\SetJRho}{{\mathcal{J}_{\pm}}}
\newcommand{\SetJavg}{{\mathcal{J}_{avg}}}
\newcommand{\SetJavgMod}{{\hat{\mathcal{J}}_{avg}}}
\newcommand{\SetJequal}{{\mathcal{J}_{=}}}
\newcommand{\odens}{\mathbb{I}} 
\newcommand{\CE}{\mathcal{CE}}
\newcommand{\W}{\mathcal{W}}
\newcommand{\Action}{\mathcal{A}}
\newcommand{\F}{\mathcal{F}}
\newcommand{\G}{\mathcal{G}}
\newcommand{\EnergyFunctional}{\mathcal{E}}
\newcommand{\auxilaryFctAction}{\alpha}
\newcommand{\ActionU}{\widehat{\mathcal{A}}}
\newcommand{\prox}{\mathrm{prox}}
\newcommand{\proj}{\mathrm{proj}}
\newcommand{\indicatorFct}{{\mathcal{I}}}
\newcommand{\ol}[1]{\overline{#1}}
\DeclareMathOperator{\avg}{avg}
\DeclareMathOperator{\lindiscr}{\mathcal{I}} 
\newcommand{\InProdV}[2]{\langle #1,#2 \rangle_\pi}
\newcommand{\InProdE}[2]{\langle #1,#2 \rangle_Q}
\newcommand{\InProdH}[2]{\langle #1,#2 \rangle_H}
\newcommand{\NormV}[1]{\| #1 \|_\pi}
\newcommand{\NormE}[1]{\| #1 \|_Q}
\newcommand{\NormH}[1]{\| #1 \|_H}
\DeclareMathOperator{\id}{id}
\newcommand{\pr}{{\tn{pr}}}
\newcommand{\massNodes}{\rho}
\newcommand{\massEdges}{\vartheta}
\newcommand{\massVX}{\massNodes^{-}}
\newcommand{\massVY}{\massNodes^{+}}
\newcommand{\momentum}{m}
\newcommand{\LagrangeMultiplierCE}{\varphi}
\newcommand{\massElement}{\bar \rho}
\newcommand{\massElementSlack}{q}
\newcommand{\transportCostFunction}{\Phi}
\newcommand{\FESpaceZeroNodes}{V_{n,h}^0}
\newcommand{\FESpaceZeroEdges}{V_{e,h}^0}
\newcommand{\FESpaceOneNodes}{V_{n,h}^{1}}
\newcommand{\interval}[1]{I_{{#1}}}
\newcommand{\pProj}{p^{\pr}}
\newcommand{\nCone}{N}
\newcommand{\supdiff}{\partial^+}
\newcommand{\Ac}{\mathcal{A}}
\newcommand{\Bc}{\mathcal{B}}
\newcommand{\Mc}{\mathcal{M}}
\newcommand{\Pc}{\mathcal{P}}
\newcommand{\Wc}{\mathcal{W}}
\newcommand{\Entropy}{\mathcal{H}}
\newcommand{\weaklyStar}{\stackrel{*}{\rightharpoonup}}
\DeclareMathOperator\arctanh{arctanh}
\newtheorem{Definition}{Definition}[section]
\theoremstyle{remark}
\newtheorem{Remark}[Definition]{Remark}
\theoremstyle{plain}
\newtheorem{Proposition}[Definition]{Proposition}
\newtheorem{Lemma}[Definition]{Lemma}
\newtheorem{Theorem}[Definition]{Theorem}
\newtheorem{Corollary}[Definition]{Corollary}
\def\ie{\emph{i.e. }}
\DeclareMathOperator*{\argmin}{arg\,min}
\DeclareMathOperator{\inter}{int}
\newcommand{\iterl}[1]{#1^{(\ell)}}
\newcommand{\iterll}[1]{#1^{(\ell+1)}}
\newcommand{\iterz}[1]{#1^{(0)}}
\algrenewcommand{\algorithmiccomment}[1]{\hfill$//$ #1}
\newcommand{\LineIf}[1]{\State \textbf{if} #1}
\pgfplotsset{compat=newest}
\title{Computation of Optimal Transport on \\ Discrete Metric Measure Spaces}
\author{Matthias Erbar \footnote{Institute for Applied Mathematics, University of Bonn},
Martin Rumpf \footnote{Institute for Numerical Simulation, University of Bonn},
Bernhard Schmitzer \footnote{Institute for Applied Mathematics, University of M\"unster},
Stefan Simon\footnote{Institute for Numerical Simulation, University of Bonn}}
\date{\today}
\begin{document}

\maketitle

\begin{abstract}
  In this paper we investigate the numerical approximation of
    an analogue of the Wasserstein distance for optimal transport on
    graphs that is defined via a discrete modification of the
    Benamou--Brenier formula. This approach involves the logarithmic
  mean of measure densities on adjacent nodes of the graph.  For this
  model a variational time discretization of the probability densities
  on graph nodes and the momenta on graph edges is proposed.  A robust
  descent algorithm for the action functional is derived, which in
  particular uses a proximal splitting with an edgewise nonlinear
  projection on the convex subgraph of the logarithmic mean.  Thereby,
  suitable chosen slack variables avoid a global coupling of
  probability densities on all graph nodes in the projection step.
  For the time discrete action functional $\Gamma$--convergence to the
  time continuous action is established.  Numerical results for a
  selection of test cases show qualitative and quantitative properties
  of the optimal transport on graphs.  Finally, we use our
    algorithm to implement a JKO scheme for the gradient flow of the
    entropy in the discrete transportation distance, which is known to
    coincide with the underlying Markov semigroup, and test our results
    against a classical backward Euler discretization of this discrete
    heat flow.
\end{abstract}

\begin{center}
 {\bf Key Words:} optimal transport on graphs, proximal splitting, gradient flows\\
 {\bf AMS Subject Classifications: 
                65K10 
           \and 49M29 
           \and 49Q20 
           \and 60J27 
 }
\end{center}

\section{Introduction}
\label{sec:intro}
For a metric space $(\SetNodes,d)$ and a weighting exponent
$p \in [1,\infty)$ optimal transport induces the $p$-Wasserstein
distances $W_p$ on the probability measures over $\SetNodes$.  A
remarkable property of Wasserstein distances is that they form a
length space if the base space $(\SetNodes,d)$ is a length space,
inducing the so-called displacement interpolation between probability
measures \cite{McCannConvexity1997}. The celebrated Benamou--Brenier
formula for $W_2$ over $\R^n$ \cite{BeBr00} can be interpreted as an
explicit search for the shortest path between two probability
measures. In the last two decades the geometry of metric spaces has
extensively been studied by means of optimal transport. In explicit it
has been observed that the 2-Wasserstein metric over probability
densities in $\R^n$ formally resembles a Riemannian manifold
\cite{Ot01} and that various diffusion-type equations can be
interpreted as gradient flows for entropy-type functionals with
respect to this metric \cite{JKO1998}. For a comprehensive
introduction we refer to the monographs
\cite{Villani-OptimalTransport-09,Santambrogio-OTAM}.

Unfortunately, this rich geometry is not directly available when the
base space $\SetNodes$ is discrete, since $W_2$ degenerates and does
not admit geodesics.
Maas \cite{Ma11} introduced a transport-type Riemannian metric $\Wc$
on probability measures over a discrete space $\SetNodes$ equipped
with a reversible Markov kernel $Q$, based on an adaption of the
Benamou--Brenier formula.  A key ingredient in the construction is the
choice of a \emph{`mass averaging'} function $\theta$ that
interpolates the amount of mass on neighbouring graph vertices. For
the particular choice of $\theta$ being the logarithmic mean, the heat
equation (with respect to the underlying Markov kernel) arises as
gradient flow of the entropy with respect to this metric \cite{Ma11,
  Mielke-Nonlinearity-2011}, yielding a discrete analogue of Otto's
interpretation of diffusive PDEs, see also \cite{MaasDiscretePME2014}
for a generalization to non-linear evolution equations on discrete
spaces. In analogy to the Lott--Sturm--Villani theory the displacement
interpolation on graphs has been used to introduce a notion of Ricci
curvature lower bounds for discrete spaces equipped with Markov
kernels \cite{ErbarMaas-Ricci2012} that implies a variety of
functional inequalities in analogy to the theory of
Lott--Sturm--Villani. The study of transport-type distances on
discrete domains has various connections to the original Wasserstein
distances on continuous domains.  Approximating a torus with an
increasingly finer toroidal graph, the discrete transport metric $\Wc$
has been shown to converge to the continuous underlying 2-Wasserstein
distance on the torus in the sense of Gromov--Hausdorff \cite{GiMa13}.
Conversely, the introduction of a mass averaging function for discrete
spaces has in turn inspired the design of new non-local transport-type
metrics in continuous domains \cite{Erbar-Jump2014}.
\medskip

Computing classical Wasserstein distances $W_2$ numerically is often a
challenge.  While the classical Kantorovich formulation via transport
couplings is a standard linear program, its na\"ive dense form
requires $(\card \SetNodes)^2$ variables which may quickly become
computationally unfeasible as $\SetNodes$ increases in size.  On
arbitrary metric graphs $(\SetNodes,d)$ an additional problem arises:
only local edge lengths are usually prescribed and the full distance
function $d : \SetNodes \times \SetNodes \to \R$ is in general unknown
a priori. On large graphs, computing $d$ from local edge lengths may
be computationally prohibitive or even storing $d$ may exceed the
memory capacities.

Owing to its particular structure, the 1-Wasserstein distance over a
discrete graph can be reformulated as a min cost flow problem along
its edges, thus drastically reducing the number of required variables
if the graph is sparse and requiring no pre-computation of $d$, see
for instance \cite{NetworkFlows1993}. On continuous domains this
corresponds to \emph{Beckmann's problem} \cite{Santambrogio-OTAM}.  A
numerical scheme tailored to application on meshed surfaces is
presented in \cite{SolomonEMDSurfaces2014}.  A computational approach
that uses quadratic regularization to break the non-uniqueness of the
optimal flow is described in \cite{EssidSolomon-QuadraticRegOT-2017}.

For the 2-Wasserstein distance on continuous domains the
Benamou--Brenier formula serves a similar purpose, see for instance
\cite{PaPe14} for a numerical scheme based on proximal point
algorithms. However, this does not immediately carry over to discrete
graphs, as the mass averaging function $\theta$ introduces a
non-trivial coupling of the mass variables along graph edges.  In
\cite{SolomonTransportGraphs2016} a Benamou--Brenier-type transport
distance on discrete metric graphs is developed, similar to the
construction of Maas, and a corresponding numerical scheme is
developed. A crucial design choice is that $\theta$ is picked to be
the harmonic mean which allows the application of second-order convex
cone programs for numerical optimization.  This does not extend to
other choices of $\theta$ and thus, for instance, hinders the
numerical study of the gradient flow when $\theta$ is the logarithmic
mean.

\subsubsection*{Contribution}  In this article we present a scheme
for the numerical approximation of the distance $\Wc$ on discrete sets
$\SetNodes$ equipped with irreducible Markov kernels $Q$ as introduced
by Maas.  We pick up the Benamou--Brenier-type formulation and provide
a temporal discretization of the action functional to obtain a
finite-dimensional convex problem and prove $\Gamma$-convergence of
the discretized functional to the original problem, as well as strong
convergence of the discrete geodesics to the continuous geodesics.  To
overcome the strong coupling of mass variables along graph edges
caused by the mass averaging function we introduce a set of slack
variables to remedy this entanglement.  This allows us to apply a
robust proximal point algorithm for the optimization. Due to the slack
variables, all involved proximal mappings can be computed efficiently
by either solving a sparse linear program (if $Q$ is sparse) or by
decomposing them into independent low-dimensional sub-problems.

In particular this numerical scheme does not depend critically on the
choice of $\theta$ and can be quickly adapted to different
variants. We provide formulas for the logarithmic and geometric mean.
For a series of numerical test cases we visualize and discuss the
behaviour of the interpolating flow.
Finally, we adopt the algorithm to approximate gradient flows with
respect to the discrete transportation distance $\Wc$. In particular,
we test the algorithm against a classical backward Euler
discretization of the heat equation on a graph which coincides with
the gradient flow of the entropy.

\subsubsection*{Organization} The paper is organized as follows. At first we review the
construction of the $L^2$-Wasserstein metric on discrete spaces by
Maas \cite{Ma11} in Section \ref{sec:review}.  Then, in Section
\ref{sec:Discretization} we will derive the time discretization and
establish $\Gamma$-convergence of the time discrete action functional
and the convergence of time discrete geodesics to a continuous
geodesic.  Next, the proximal splitting algorithm with suitably chosen
slack variables is presented in detail in Section
\ref{sec:ProximalSplitting}.  Numerical results are discussed in
Section \ref{sec:numerics} and the experimental comparison of
solutions of a JKO scheme for the entropy and solutions of the Markov
semigroup are presented in Section \ref{sec:gradflow}.

\section{Optimal transport on graphs}
\label{sec:review}
In this section we briefly review the discrete transportation metric
on the space of probability measures over a graph and in particular
recall the basic definitions and discuss the analogy to the
$L^2$-Wasserstein metric on probability measures over $\R^n$. Then we
derive a priori bounds on feasible curves of measures.

\subsection{The discrete transportation distance}
Let $\SetNodes$ be a finite set and let
$Q : \SetNodes \times \SetNodes \to [0,\infty)$ be the transition rate
matrix of a continuous time Markov chain on $\SetNodes$.
I.e.~we have $Q(x,y)\geq0$ for
$x\neq y$ and make the convention that $Q(x,x)=0$ for all
$x\in\SetNodes$. Then $\SetNodes$ can be interpreted as the set of
vertices of a graph with directed edges $(x,y)$ for those
$(x,y) \in \SetNodes \times \SetNodes$ with positive weight $Q(x,y)$.
We assume the Markov chain to be irreducible or equivalently the
corresponding graph to be strongly connected. Thus, there exists a
unique stationary distribution $\pi : \SetNodes \to (0,1]$ of the
Markov chain with $\sum_{x \in \SetNodes} \pi(x) = 1$. We further
assume that the Markov chain is reversible with respect to $\pi$,
i.e.~the detailed balance condition $\pi(x) Q(x,y) = \pi(y) Q(y,x)$
holds for all $x,y \in \SetNodes$.  Now, the set of probability
densities on $\SetNodes$ with respect to $\pi$ is given by
\begin{align*}
 \Pc(\SetNodes) := \left\{ \massNodes: \SetNodes \to \R^+_0 : \sum_{x \in \SetNodes} \pi(x) \massNodes(x) = 1 \right\} \, .
\end{align*}
For brevity, in the following we will write $\R^{\SetNodes}$ and $\R^{\SetNodes \times \SetNodes}$ for the spaces of real functions over $\SetNodes$ and $\SetNodes \times \SetNodes$ respectively.

Next, we define the following inner products on $\R^{\SetNodes}$ and $\R^{\SetNodes \times \SetNodes}$
\begin{align}
 \label{def:innerProducts}
 \InProdV{\phi}{\psi} & := \sum_{x \in \SetNodes} \phi(x) \psi(x) \pi(x), &
 \InProdE{\Phi}{\Psi} & := \frac{1}{2} \sum_{x,y \in \SetNodes} \Phi(x,y) \Psi(x,y) Q(x,y) \pi(x)
 \end{align}
 for $\phi$, $\psi \in \R^{\SetNodes}$ and $\Phi$, $\Psi \in \R^{\SetNodes \times \SetNodes}$.
The corresponding induced norms are denoted by $\NormV{\cdot}$ and $\NormE{\cdot}$.
A discrete gradient $\nabla_{\SetNodes} : \R^{\SetNodes} \to \R^{\SetNodes \times \SetNodes}$ and a discrete divergence 
$\div_{\SetNodes} : \R^{\SetNodes \times \SetNodes} \to \R^{\SetNodes}$ are given by
\begin{align}
 \label{def:graddiv}
 (\nabla_{\SetNodes} \psi)(x,y) & := \psi(x) - \psi(y), &
 (\div_{\SetNodes} \Psi)(x) & := \frac{1}{2} \sum_{y \in \SetNodes} Q(x,y) (\Psi(y,x) - \Psi(x,y)).
 \end{align}
Then the duality between these two operators formulated as  the discrete integration by parts formula
\begin{align*}
  \InProdV{\phi}{\div_{\SetNodes} \Psi} = -\InProdE{\nabla_\SetNodes \phi}{\Psi}
\end{align*}
can easily be verified.
The associated discrete Laplace-operator $\Delta_{\SetNodes} : \R^{\SetNodes} \to \R^{\SetNodes}$ is given by
\begin{align*}
 \Delta_{\SetNodes} \psi (x) 
 := \div_{\SetNodes} (\nabla_{\SetNodes} \psi) (x)
 = \sum_{y \in \SetNodes} Q(x,y)\left[ \psi(y)-\psi(x) \right]
 = (Q-D)\psi(x) \, ,
\end{align*}
where $D=\text{diag}(\sum_yQ(x,y))_{x\in\SetNodes}$.  The graph divergence allows
to formulate a continuity equation for time-dependent probability
densities $\massNodes : [0,1] \to \R^{\SetNodes}$ and momenta
$\momentum : [0,1] \to \R^{\SetNodes \times \SetNodes}$ describing the
flow of mass along the graph edges. In explicit, we consider the
following definition of solutions to the continuity equation with
boundary values at time $t=0$ and $t=1$.
\begin{Definition}[Continuity equation]\label{def:CEStrong}
The set $\CE(\massNodes_A, \massNodes_B)$ of solutions of the continuity equations for given 
boundary data $\massNodes_A$, $\massNodes_B \in \Pc(\SetNodes)$ is defined as the set of all pairs $(\massNodes,\momentum)$ 
with $\massNodes:[0,1] \times \R^\SetNodes \to \R$ and $\momentum:[0,1] \times \R^{\SetNodes \times \SetNodes} \to \R$ measurable, such that  
\begin{equation}
\int_0^1 \InProdV{\partial_t \LagrangeMultiplierCE(t,\cdot)}{\massNodes(t,\cdot)} +
 \InProdE{\nabla_\SetNodes \LagrangeMultiplierCE(t,\cdot)}{\momentum(t,\cdot)}   \d t= 
\InProdV{\LagrangeMultiplierCE(1,\cdot)}{\massNodes_B} - \InProdV{\LagrangeMultiplierCE(0,\cdot)}{\massNodes_A}
\label{eq:CEStrong}
\end{equation}
for all $\LagrangeMultiplierCE \in C^1([0,1],\R^\SetNodes)$. 
\end{Definition}

For $m\in L^2((0,1),\R^{\SetNodes \times \SetNodes})$ (see Lemma \ref{Lemma:L2}) 
one gets $\massNodes \in H^{1,2}((0,1),\R^{\SetNodes})$ and thus 
$\partial_t \massNodes + \div_\SetNodes \momentum =0$ holds a.e.~. 
Furthermore, $\massNodes \in C^{0,\frac12}([0,1],\R^{\SetNodes \times \SetNodes})$ and 
$\massNodes(1,\cdot) =  \massNodes_B$, $\massNodes(0,\cdot) =  \massNodes_A$.
If $\massNodes(t,\cdot) \geq 0$ is ensured for all $t\in (0,1)$ via a finite energy property (see \eqref{def:localAction} below), then 
testing with $\LagrangeMultiplierCE(t,x) = \zeta(t)$ implies that $\massNodes(t,\cdot) \in \Pc(\SetNodes)$.

The Benamou--Brenier formula \cite{BeBr00} asserts that the squared
$L^2$-Wasserstein distance for probability measures in $\R^n$ is the
minimum of an action functional over solutions to the corresponding
continuity equation. Formally the action functional can be interpreted
as a Riemannian path length \cite{Ot01}. To construct an analogous
action functional for solutions
$(\massNodes,\momentum) \in \CE(\massNodes_A,\massNodes_B)$ a mass
density on edges has to be deduced from the the mass densities on the
edge nodes. To this end, one defines an averaging function
$\theta :(\R^+_0)^2 \to \R^+_0$ which satisfies: \smallskip

$\theta$ is continuous, concave, 1-homogeneous, and symmetric, $\theta$ is $C^\infty$ on $(0,+\infty)^2$,
$\theta(0,s) = \theta(s,0) = 0$ and $\theta(s,s)=s$ for $s \in \R^+_0$, $\theta(s,t) > 0$ if $s>0$ and $t>0$, and
$s \mapsto \theta(t,s)$ is monotone increasing on $\R^+_0$ for fixed $t \in \R^+_0$.
\smallskip

It will be useful to consider $\theta$ as a concave function
$\R^2 \to \RCupMInf$. Therefore, we will set $\theta(s,t) = -\infty$
when $\min\{s,t\}<0$.  Possible choices for $\theta$ are for example
the logarithmic mean $\theta_{\tn{log}}$ or the geometric mean
$\theta_{\tn{geo}}$ for $s$, $t \in \R^+_0$:
\begin{align}
\label{def:means}
 \theta_{\tn{log}}(s,t) =
\begin{cases} 0, & \text{ if } s = 0 \text{ or } t = 0 \\
              s, & \text{ if } s=t \\
              \frac{t-s}{\log(t) - \log(s)} & \text{ otherwise }
\end{cases}
\quad  , \qquad \qquad
 \theta_{\tn{geo}}(s,t) = \sqrt{st} \, .
\end{align}
Note that the arithmetic mean is not admissible.  Based on this
averaging function one can define the discrete transportation distance
on $\Pc(\SetNodes)$.
\begin{Definition}[Action functional and distance]\label{defi:definition_of_metric}
The action functional for measurable functions $\massNodes : [0,1] \to \R^\SetNodes$
and $\momentum : [0,1] \to \R^{\SetNodes \times \SetNodes}$ is defined as
\begin{multline} 
	\label{def:localAction}
	\Action(\massNodes,\momentum) = \frac12 \int_0^1 \sum_{x,y \in \SetNodes} \auxilaryFctAction\big( \massNodes(t,x), \massNodes(t,y), \momentum(t,x,y) \big) \, Q(x,y) \, \pi(x) \d t
	\\
\tn{with } \auxilaryFctAction  : \R^3 \to \RCupInf;  \quad
 (s,t,m)  \mapsto 
 \begin{cases}
 \frac{m^2}{\theta(s,t)} & \tn{if } \theta(s,t) > 0, \\
 0 & \tn{if } \theta(s,t) = 0 \tn{ and } m = 0, \\
 + \infty & \tn{else.}
 \end{cases}
\end{multline}
The energy is then given by
\begin{align*}
	\EnergyFunctional(\massNodes, \momentum) &
	= \Action(\massNodes, \momentum)
	+ \indicatorFct_{\CE(\massNodes_A,\massNodes_B)}(\massNodes, \momentum)\,,
\end{align*}
where $\indicatorFct_{\CE(\massNodes_A,\massNodes_B)}$ is the
indicator functional, which is zero for $(\massNodes,\momentum)$ in
$\CE(\massNodes_A,\massNodes_B)$ and $\infty$ otherwise.  The induced
discrete transportation distance is obtained by
\begin{equation}\label{def:metric}
 \Wc(\massNodes_A, \massNodes_B) = \sqrt{\inf \EnergyFunctional(\massNodes, \momentum)}\,.
\end{equation}
\end{Definition}

Note that $\auxilaryFctAction$ is convex and lower semi-continuous and
$\CE(\massNodes_A,\massNodes_B)$ is a convex set. Hence,
\eqref{def:metric} is a convex optimization problem.  In is shown in
\cite[Theorem 3.8]{Ma11} that the mapping
$\Wc: \Pc(\SetNodes) \times \Pc(\SetNodes) \to \R$ defines a metric on
$\Pc(\SetNodes)$, provided
\begin{align*}
  \int_0^1\frac{1}{\sqrt{\theta(1-r,1+r)}}\d r <\infty\;.
\end{align*}
This is the case for the logarithmic mean $\theta_{\log}$ and the
geometric mean $\theta_{\rm geo}$. In \cite[Theorem
3.2]{ErbarMaas-Ricci2012} it is shown that the infimum in
\eqref{def:metric} is attained by an optimal pair $(\rho,\psi)$. The
curve $(\rho_t)_{t\in[0,1]}$ is a constant speed geodesic for the
distance $\Wc$, i.e. it holds
$\Wc(\rho_t,\rho_s)=|t-s|\Wc(\rho_A,\rho_B)$ for all $s,t\in[0,1]$.

\subsection{A priori bounds}
In what follows we will investigate the numerical approximation of
$\Wc$ using a suitable Galerkin discretization in time and solving the
resulting discrete convex optimization problem.  Here the the
nonlinear averaging function $\theta$ and the resulting coupling of
the values of the probability desity on neighbouring nodes will
require special treatment in order to obtain a robust and effective
solution scheme.
To this end, we first discuss a few simplifications of the optimization problem \eqref{def:metric} that will help to reduce the computational complexity.
\begin{Remark}[Sparsity of kernel $Q$]
\label{rem:Sparsity}
Let $\SetEdges = \{ (x,y) \in \SetNodes^2 \,\colon\, Q(x,y)>0 \}$ be the set of `edges' indicated by non-zero transition probability. 
As $Q$ is reversible, one finds $(x,y) \in \SetEdges$ iff $(y,x) \in \SetEdges$. 
Furthermore,  
$\div_\SetNodes \momentum(t,\cdot)$ and $\Action(\massNodes,\momentum)$ 
for $\momentum : [0,1] \to \R^{\SetNodes \times \SetNodes}$ only depend on values of $\momentum(t,x,y)$ where $(x,y) \in \SetEdges$.
Hence,if the kernel $Q$ is sparse, i.e.~if $\SetEdges$ is only a small subset of $\SetNodes \times \SetNodes$ this implies a considerable reduction of computational complexity.
\end{Remark}
In addition, the following Lemma allows to replace the two variables $\momentum(t,x,y)$ and $\momentum(t,y,x)$ by one effective variable, further reducing the problem size.
\begin{Lemma}[Antisymmetry of optimal
  momentum] \label{Lem:AntisymmetryM} If
  $\Wc(\massNodes_A,\massNodes_B)$ is finite and if
  $\massNodes:[0,1]\to\R^\SetNodes$ and
  $\momentum : [0,1] \to \R^{\SetNodes \times \SetNodes}$ are optimal
  for \eqref{def:metric} then $\momentum(t,x,y) = -\momentum(t,y,x)$
  $t$-almost everywhere, whenever $(x,y) \in \SetEdges$ (see above
  remark for definition of $\SetEdges$).
\end{Lemma}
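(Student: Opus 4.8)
The plan is to show that replacing an optimal momentum $\momentum$ by its antisymmetric part does not increase the action while preserving membership in $\CE(\massNodes_A,\massNodes_B)$, and then to invoke strict convexity of $\auxilaryFctAction$ in its third argument (on the set where it is finite) to conclude that the optimal momentum must already be antisymmetric $t$-a.e. on $\SetEdges$.

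First I would define, for a given pair $(\massNodes,\momentum)$, the antisymmetrized momentum $\momentum^{\rm as}(t,x,y) := \tfrac12\big(\momentum(t,x,y) - \momentum(t,y,x)\big)$. The key observation is that $\div_\SetNodes$ depends on $\momentum$ only through its antisymmetric part: indeed from the definition \eqref{def:graddiv}, $(\div_\SetNodes \Psi)(x) = \tfrac12\sum_y Q(x,y)(\Psi(y,x)-\Psi(x,y))$ is unchanged if $\Psi$ is replaced by its antisymmetric part (using reversibility $Q(x,y)\pi(x) = Q(y,x)\pi(y)$ so the symmetrization of the kernel-weighted pairing is consistent). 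Hence $(\massNodes,\momentum^{\rm as}) \in \CE(\massNodes_A,\massNodes_B)$ whenever $(\massNodes,\momentum)$ is, since the defining identity \eqref{eq:CEStrong} only sees $\momentum$ through $\div_\SetNodes\momentum$ after integration by parts — more precisely, the term $\InProdE{\nabla_\SetNodes\LagrangeMultiplierCE}{\momentum}$ equals $-\InProdV{\LagrangeMultiplierCE}{\div_\SetNodes\momentum}$, which is insensitive to the symmetric part of $\momentum$.

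Next I would compare the actions. Fix $t$ and an edge pair $\{(x,y),(y,x)\}$. Since $\theta$ is symmetric, $\theta(\massNodes(t,x),\massNodes(t,y)) = \theta(\massNodes(t,y),\massNodes(t,x)) =: \bar\theta$, so the contribution of this pair to $\Action$ is proportional (with the common positive weight, using $Q(x,y)\pi(x)=Q(y,x)\pi(y)$) to $\auxilaryFctAction(\cdot,\cdot,\momentum(t,x,y)) + \auxilaryFctAction(\cdot,\cdot,\momentum(t,y,x))$, i.e. to $g(\momentum(t,x,y)) + g(\momentum(t,y,x))$ where $g(m) = m^2/\bar\theta$ if $\bar\theta>0$ (and the usual $\{0,+\infty\}$ convention otherwise). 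When $\bar\theta > 0$, $g$ is strictly convex, so by Jensen
\begin{align*}
 g(\momentum^{\rm as}(t,x,y)) + g(\momentum^{\rm as}(t,y,x))
 = 2\, g\!\left(\tfrac{\momentum(t,x,y) - \momentum(t,y,x)}{2}\right)
 \le g(\momentum(t,x,y)) + g(-\momentum(t,y,x)),
\end{align*}
and since $g$ is even the right-hand side equals $g(\momentum(t,x,y)) + g(\momentum(t,y,x))$; equality holds iff $\momentum(t,x,y) = -\momentum(t,y,x)$. When $\bar\theta = 0$ both sides are $0$ or $+\infty$ simultaneously and there is nothing to prove (finiteness of the action forces $\momentum(t,x,y)=\momentum(t,y,x)=0$ there anyway, which is already antisymmetric). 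Integrating over $t$ and summing over edges gives $\Action(\massNodes,\momentum^{\rm as}) \le \Action(\massNodes,\momentum)$, hence $\EnergyFunctional(\massNodes,\momentum^{\rm as}) \le \EnergyFunctional(\massNodes,\momentum)$.

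Finally, since $(\massNodes,\momentum)$ is assumed optimal and $\Wc(\massNodes_A,\massNodes_B)<\infty$, we have $\EnergyFunctional(\massNodes,\momentum^{\rm as}) = \EnergyFunctional(\massNodes,\momentum) < \infty$, so the inequality above is an equality for a.e.\ $t$ and every edge; by the strict-convexity equality case this forces $\momentum(t,x,y) = -\momentum(t,y,x)$ for $t$-a.e.\ $t$ and all $(x,y)\in\SetEdges$. The main technical point to be careful about — the only real obstacle — is the bookkeeping of the reversibility weights $Q(x,y)\pi(x)$ when pairing $(x,y)$ with $(y,x)$, so that the contributions of $\momentum(t,x,y)$ and $\momentum(t,y,x)$ to $\Action$ genuinely carry the same coefficient and the edgewise Jensen argument applies cleanly; detailed balance $\pi(x)Q(x,y)=\pi(y)Q(y,x)$ is exactly what makes this work.
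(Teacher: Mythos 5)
Your proposal is correct and follows essentially the same route as the paper's proof: the paper also antisymmetrizes via $\ol{\momentum}=\tfrac12(\momentum+\hat{\momentum})$ with $\hat{\momentum}(t,x,y)=-\momentum(t,y,x)$, checks invariance of the continuity equation, and uses detailed balance together with $\auxilaryFctAction(s,t,m)=\auxilaryFctAction(t,s,-m)$ and strict convexity in $m$ to force equality. Your edgewise Jensen computation is just the unrolled version of the paper's global convexity argument, with the same two key ingredients (reversibility of the weights and evenness/strict convexity of $m\mapsto m^2/\theta$).
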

\begin{proof}
Let $\massNodes : [0,1] \to \R^\SetNodes$ and $\momentum : [0,1] \to \R^{\SetNodes \times \SetNodes}$ be given such that $\EnergyFunctional(\massNodes,\momentum) < \infty$. Now set
\begin{align*}
\hat{\momentum}(t,x,y) := -\momentum(t,y,x)\,.
\end{align*}
One quickly verifies that $\div_\SetNodes \hat{\momentum} = \div_\SetNodes \momentum$ and that thus $(\massNodes,\hat{\momentum}) \in \CE(\massNodes_A,\massNodes_B)$ as well.
Besides, by using that $Q(x,y)\,\pi(x) = Q(y,x)\,\pi(y)$ and $\auxilaryFctAction(s,t,m)=\auxilaryFctAction(t,s,-m)$ one finds that $\Action(\massNodes,\hat{\momentum})=\Action(\massNodes,\momentum)$.
Let now $\ol{\momentum}=\frac12 (\momentum + \hat{\momentum})$. Note that $\ol{\momentum}(t,x,y)$ is anti-symmetric in $x$ and $y$. By convexity of $\CE(\massNodes_A,\massNodes_B)$ one gets $(\massNodes,\ol{\momentum}) \in \CE(\massNodes_A,\massNodes_B)$ and by convexity of $\Action$ one finds
\begin{align*}
\Action(\massNodes,\ol{\momentum}) \leq \frac12 \left(
\Action(\massNodes,\momentum) + \Action(\massNodes,\hat{\momentum})\right)
= \Action(\massNodes,\momentum)\,.
\end{align*}
Further, the finiteness of $\Action(\massNodes,\momentum)$ implies that $\momentum(t,x,y)=0$ when $\theta(\massNodes(t,x),\massNodes(t,y))=0$ 
and $(x,y) \in \SetEdges$ $t$-almost everywhere , values of $\momentum(t,x,y)$ for $(x,y) \notin \SetEdges$ will have no impact on $\Action$,
and the function $\R \ni z \mapsto \auxilaryFctAction(s,t,z)$ is even strictly convex for fixed $s,\,t > 0$.
Hence, we observe that $\Action(\massNodes,\ol{\momentum})<\Action(\massNodes,\momentum)$ unless $\ol{\momentum}$ already coincides with $\momentum$ for almost every $t$ and all $(x,y) \in \SetEdges$.
\end{proof}
In the $\Gamma$-convergence analysis we will make use on the following $L^2$ bound for the momentum.
Let us introduce the constants
\begin{align}\nonumber
  C^*:=&\max_{x\in\SetNodes} \sum_yQ(x,y)\;,\\\label{eq:constantsC}
  C_*:=&\min_{x,y\in\SetNodes, Q(x,y)>0}Q(x,y)\pi(x)\;.
\end{align}
\begin{Lemma}[$L^2$ bound for the momentum]\label{Lemma:L2}
Let $(\massNodes,\momentum): [0,1] \to \R^\SetNodes \times \R^{\SetNodes \times \SetNodes}$ 
be a measurable path with energy  $\EnergyFunctional(\massNodes,\momentum) \leq \widebar E<\infty$. Then, $\momentum$ and $\massNodes$ are uniformly bounded in 
$L^2((0,1), \R^{\SetNodes \times \SetNodes})$ and 
$H^{1,2}((0,1),\R^\SetNodes) \cap C^{0,\frac12}([0,1],\R^\SetNodes)$, respectively, with 
bounds solely depending on $\SetNodes$ and $\widebar E$.
\end{Lemma}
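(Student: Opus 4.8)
The plan is to upgrade the energy bound $\EnergyFunctional(\massNodes,\momentum)\le\widebar E$ first into a uniform $L^\infty$ bound on $\massNodes$, then to read off the $L^2$ bound on $\momentum$ straight from the action, and finally to use the continuity equation to convert these into the asserted regularity of $\massNodes$. All constants will in the end depend only on $\widebar E$ and the fixed data $\SetNodes$, $Q$, $\pi$.

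First I would establish that $\massNodes(t,\cdot)\in\Pc(\SetNodes)$ for a.e.\ $t$. Since $\theta(s,t)=-\infty$ as soon as $\min\{s,t\}<0$, finiteness of $\Action(\massNodes,\momentum)$ forces $\auxilaryFctAction(\massNodes(t,x),\massNodes(t,y),\momentum(t,x,y))<\infty$ for a.e.\ $t$ and every edge $(x,y)\in\SetEdges$, hence $\massNodes(t,x)\ge 0$ for a.e.\ $t$ and every $x$ (the graph is strongly connected, so every node lies on an edge). Testing \eqref{eq:CEStrong} with a spatially constant potential $\LagrangeMultiplierCE(t,x)=\zeta(t)$, for which $\nabla_\SetNodes\LagrangeMultiplierCE\equiv 0$, and using $\massNodes_A,\massNodes_B\in\Pc(\SetNodes)$, gives $\int_0^1\zeta'(t)\sum_x\pi(x)\massNodes(t,x)\d t=\zeta(1)-\zeta(0)$ for all $\zeta\in C^1([0,1])$, so $\sum_x\pi(x)\massNodes(t,x)=1$ for a.e.\ $t$. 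Together with nonnegativity this yields $0\le\massNodes(t,x)\le 1/\min_z\pi(z)$ for a.e.\ $t$ and all $x$.

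Next I would extract the $L^2$ bound on $\momentum$. The elementary point is that a concave, symmetric, $1$-homogeneous $\theta$ with $\theta(1,1)=1$ satisfies $\theta(s,t)\le\tfrac{s+t}{2}$ for $s,t\ge 0$ (apply concavity and symmetry to $(\tfrac{s+t}{2},\tfrac{s+t}{2})=\tfrac12(s,t)+\tfrac12(t,s)$). Combined with the previous step this gives $0\le\theta(\massNodes(t,x),\massNodes(t,y))\le 1/\min_z\pi(z)$, hence on every edge $\auxilaryFctAction(\massNodes(t,x),\massNodes(t,y),\momentum(t,x,y))\ge\big(\min_z\pi(z)\big)\momentum(t,x,y)^2$ (the degenerate case $\theta=0$, which forces $\momentum=0$ a.e., being trivial). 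We may assume $\momentum$ vanishes off $\SetEdges$, since this changes neither the energy nor $\div_\SetNodes\momentum$ (cf.\ Remark \ref{rem:Sparsity}), and then, using $Q(x,y)\pi(x)\ge C_*>0$ on $\SetEdges$,
\begin{align*}
 \widebar E\ \ge\ \Action(\massNodes,\momentum) &\ \ge\ \frac{\min_z\pi(z)}{2}\int_0^1\sum_{(x,y)\in\SetEdges}\momentum(t,x,y)^2\,Q(x,y)\,\pi(x)\d t \\
 &\ \ge\ \frac{C_*\min_z\pi(z)}{2}\,\|\momentum\|_{L^2((0,1),\R^{\SetNodes\times\SetNodes})}^2 ,
\end{align*}
which is the claimed bound.

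Finally, for the regularity of $\massNodes$ I would test \eqref{eq:CEStrong} with $\LagrangeMultiplierCE(t,\cdot)=\zeta(t)\,\mathbf 1_{\{x\}}$ and use the discrete integration-by-parts formula to see that each $\massNodes(\cdot,x)$ is weakly differentiable with $\partial_t\massNodes=-\div_\SetNodes\momentum$ a.e.; since $\div_\SetNodes$ is a fixed bounded linear map, the $L^2$ bound on $\momentum$ bounds $\partial_t\massNodes$ in $L^2((0,1),\R^\SetNodes)$, and together with the $L^\infty$ bound this bounds $\massNodes$ in $H^{1,2}((0,1),\R^\SetNodes)$. The Hölder estimate then follows from Cauchy--Schwarz, $|\massNodes(t,x)-\massNodes(s,x)|=\big|\int_s^t\partial_t\massNodes(r,x)\d r\big|\le|t-s|^{1/2}\|\partial_t\massNodes(\cdot,x)\|_{L^2}$, giving a representative in $C^{0,\frac12}([0,1],\R^\SetNodes)$ with seminorm controlled by $\|\partial_t\massNodes\|_{L^2}$ and sup-norm by the first step. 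The main obstacle is that very first step: the hypothesis is only on the energy, and one must coax out of finiteness of $\Action$ both the pointwise-in-time nonnegativity of $\massNodes$ and, via the test against spatially constant potentials, the normalization $\massNodes(t,\cdot)\in\Pc(\SetNodes)$ — it is the resulting $L^\infty$ bound that renders $\theta(\massNodes(t,x),\massNodes(t,y))$ bounded and thereby powers everything else, whereas the remaining steps are routine finite-dimensional estimates together with the one-dimensional Sobolev embedding $H^{1,2}(0,1)\hookrightarrow C^{0,1/2}([0,1])$.
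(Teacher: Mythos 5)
Your proof is correct and follows essentially the same route as the paper: finiteness of the energy forces nonnegativity and mass conservation of $\massNodes$, the concavity/symmetry bound $\theta(s,t)\le\tfrac{s+t}{2}$ controls the denominator of the action, and the continuity equation together with the embedding $H^{1,2}(0,1)\hookrightarrow C^{0,\frac12}([0,1])$ gives the regularity of $\massNodes$. The only (harmless) variation is in extracting the $L^2$ bound on $\momentum$: you use the pointwise bound $\theta(\massNodes(t,x),\massNodes(t,y))\le 1/\min_z\pi(z)$ coming from the $L^\infty$ bound on $\massNodes$, whereas the paper applies Cauchy--Schwarz against the aggregate bound $\sum_{x,y}\theta\,Q(x,y)\pi(x)\le C^*$; both yield constants depending only on $(\SetNodes,Q,\pi)$ and $\widebar E$.
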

\begin{proof}
   Since $\EnergyFunctional(\massNodes,\momentum) < \infty$, 
   we have $(\massNodes,\momentum) \in \CE(\massNodes_A, \massNodes_B)$, 
   and thus for a.e.~$t \in (0,1)$ the mass is preserved, \ie  $\sum_{x \in \SetNodes} \massNodes(t,x) \pi(x) = \sum_{x \in \SetNodes} \massNodes_A(x) \pi(x) = 1$. 
   In addition, $\massNodes(t,x)$ is non-negative for all $x \in \SetNodes$ and a.e.~$t \in (0,1)$.
   By symmetry and concavity of $\theta$ and since $\theta(s,s)=s$, we can estimate 
   \begin{align*}
     \theta(\massNodes(t,x), \massNodes(t,y) ) &= \frac12\theta( \massNodes(t,x),\massNodes(t,y))+\frac12\theta(\massNodes(t,y),\massNodes(t,x)) \\
     &\leq \theta\Big(\frac{\massNodes(t,x)+\massNodes(t,y)}{2},\frac{\massNodes(t,x)+\massNodes(t,y)}{2}\Big) = \frac{\massNodes(t,x) + \massNodes(t,y)}{2}
   \end{align*}
   and get
  \begin{align}\nonumber 
   \sum_{x,y \in \SetNodes} \theta( \massNodes(t,x), \massNodes(t,y)) Q(x,y) \pi(x) 
   \leq \frac12\sum_{x,y \in \SetNodes} (\massNodes(t,x) Q(x,y) \pi(x) + \massNodes(t,y) Q(y,x) \pi(y) ) \\
=  \frac12\sum_{x,y \in \SetNodes} (\massNodes(t,x) Q(y,x) \pi(y) + \massNodes(t,y) Q(x,y) \pi(x) )   =   C^*\, \sum_{x \in \SetNodes} \massNodes(t,x) \pi(x) =  C^*\, . \label{eq:ThetaMonotonictyArg}
  \end{align}
  Thus, using the Cauchy--Schwarz inequality we obtain
  \begin{align}\nonumber
  \Big( \sum_{x,y \in \SetNodes} |\momentum(t,x,y)| Q(x,y) \pi(x) \Big)^2
  \leq & \Big( \sum_{x,y \in \SetNodes} \auxilaryFctAction( \massNodes(t,x),\massNodes(t,y), \momentum(t,x,y) ) Q(x,y) \pi(x) \Big) \\
  & \cdot\Big( \sum_{x,y \in \SetNodes} \theta(\massNodes(t,x), \massNodes(t,y) Q(x,y) \pi(x) \Big) \, . \label{eq:L2BoundM}
  \end{align}
Integrating in time we obtain 
  \begin{align*}
  \int_0^1||\momentum(t,\cdot,\cdot)||^2_Q\d t = \int_0^1 \sum_{x,y \in \SetNodes} \momentum(t,x,y)^2 Q(x,y) \pi(x) \d t \leq  \frac{C^*}{C_*} \, \widebar E \,.
  \end{align*}
Finally,  using the continuity equation \eqref{eq:CEStrong} and $\momentum$ in $L^2((0,1),\R^{\SetNodes \times \SetNodes})$
we obtain that 
\begin{align*}
  \int_0^1||\partial_t\massNodes||^2_\pi\d t \leq \int_0^1\sum_x\Big|\sum_y\momentum(t,x,y)Q(x,y)\Big|^2\pi(x)
d t \leq C^*\int_0^1\sum_{x,y}\momentum(t,x,y)^2Q(x,y)\pi(x)d t\;.
\end{align*}

This implies that $\massNodes \in H^{1,2}((0,1),\R^\SetNodes)$ and via
the Sobolev embedding theorem we obtain that also
$\massNodes \in C^{0,\frac12}((0,1),\R^\SetNodes)$.
\end{proof}

\section{Discretization}\label{sec:Discretization}
\subsection{Galerkin discretization}\label{sec:Galerkin}
To approximate the minimizers of \eqref{def:metric} numerically we
choose a Galerkin discretization in time.  The time interval $[0,1]$
is divided into $N$ subintervals $\interval{i}=[t_i,t_{i+1})$ for
$i=0, \ldots, N-1$ with uniform step size $h=\frac{1}{N}$ and
$t_i=i\,h$.  Then, we define discrete spaces
\begin{align*}
 \FESpaceOneNodes & = \{ \psi_h \in C^0([0,1],\R^{\SetNodes})  \,\colon\,
 	\psi_h(\cdot)|_{\interval{i}} \tn{ is affine } \forall i=0,\ldots,N-1
 	\} \, , \\
 \FESpaceZeroNodes & = \{ \psi_h : [0,1] \to \R^{\SetNodes} \,\colon\,
 	\psi_h(\cdot)|_{\interval{i}} \tn{ is constant } \forall i=0, \ldots, N-1
 	\} \, , \\
 \FESpaceZeroEdges & = \{ \psi_h : [0,1] \to \R^{\SetNodes \times \SetNodes} \, \colon \,
 	\psi_h(\cdot)|_{\interval{i}} \tn{ is constant } \forall i=0, \ldots, N-1 \}\,.
\end{align*}
For a function $\psi_h \in \FESpaceZeroNodes$ or $\FESpaceZeroEdges$ we will often write $\psi_h(t_i)$ to refer to its value on the interval $\interval{i}=[t_i,t_{i+1})$.
For a function $\psi_h \in \FESpaceOneNodes$ the time-derivative can be interpreted as map
\begin{align*}
	\partial_t : \FESpaceOneNodes & \to \FESpaceZeroNodes\;, &
	(\partial_t \psi_h)(t_i) & =\frac{1}{h}(\psi_h(t_{i+1})-\psi_h(t_i))
	\tn{ for } i=0,\ldots,N-1\,.
\end{align*}

We pick $\FESpaceOneNodes \times \FESpaceZeroEdges$ as the space for discretized masses and momenta $(\massNodes_h,\momentum_h)$. That is, discrete masses $\massNodes_h$ are continuous and piecewise affine and the corresponding momenta $\momentum_h$ will be piecewise constant. $\partial_t \massNodes_h$ and $\div_\SetNodes \momentum_h$ then lie in $\FESpaceZeroNodes$.
In analogy to Definition \ref{def:CEStrong} we define discrete solutions of the continuity equation.
\begin{Definition}
The set of solutions to the discretized continuity equation for given boundary values $\massNodes_A, \massNodes_B \in \R^\SetNodes$ is given by
 \begin{multline}\label{eq:CEDisc}
   \CE_h(\massNodes_A, \massNodes_B)
   = \Big\{ (\massNodes_h, \momentum_h) \in \FESpaceOneNodes \times \FESpaceZeroEdges \; : \; 
      h \sum_{i=0}^{N-1} \InProdV{\partial_t \massNodes_h(t_i,\cdot) + \div_\SetNodes \momentum_h(t_i,\cdot)}{\LagrangeMultiplierCE_h(t_i,\cdot)} = 0 \; \forall \LagrangeMultiplierCE_h \in \FESpaceZeroNodes \, , \\
      \massNodes_h(t_0, x) = \massNodes_A(x) \, , \massNodes_h(t_N, x) = \massNodes_B(x)
      \Big\} \, .
\end{multline}
\end{Definition}
One can quickly verify that
$\CE_h(\massNodes_A,\massNodes_B) = \CE(\massNodes_A,\massNodes_B)
\cap (\FESpaceOneNodes \times \FESpaceZeroEdges)$
and that $\partial_t \massNodes_h + \div_\SetNodes \momentum_h=0$
holds for a.e.~$t$ when
$(\massNodes_h,\momentum_h) \in \CE_h(\massNodes_A,\massNodes_B)$.
Next, we define a fully discrete action functional in analogy to
Definition~\ref{defi:definition_of_metric} and subsequently a discrete
version of the transport metric $\Wc$.

\begin{Definition}[Time-discrete action and transportation distance]
The averaging operator $\avg_h$ takes a measure $\psi \in \Mc([0,1],\R^{\SetNodes})$ to its average values on time intervals $\interval{i}$:
\begin{align*}
	\avg_h : \Mc([0,1],\R^{\SetNodes}) & \to \FESpaceZeroNodes\,, \quad
	(\avg_h \psi)(t_i)  = \psi(\interval{i})\;
	\tn{ for } i=0,\ldots,N-1\,.
\end{align*}
Analogously we declare the $\avg_h$ operator for $\R^{\SetNodes \times \SetNodes}$-valued measures. Note that for $\psi_h \in \FESpaceOneNodes$ one finds
$(\avg_h \psi_h)(t_i)=\tfrac12(\psi_h(t_i)+\psi_h(t_{i+1}))\,$.
For $(\massNodes,\momentum) \in \Mc([0,1],\R^{\SetNodes}) \times \Mc([0,1],\R^{\SetNodes \times \SetNodes}) $ the discrete approximation for the action is given by
\begin{align*}
	\Action_h(\massNodes, \momentum) & =
	\Action(\avg_h \massNodes, \avg_h \momentum) \\
	&=\frac{h}{2} \sum_{i=0}^{N-1} \sum_{x,y \in \SetNodes} \auxilaryFctAction \left( \avg_h \massNodes(t_i,x) , \avg_h \massNodes(t_i,y), \avg_h \momentum(t_i,x,y) \right) Q(x,y) \pi(x) \, .
\end{align*}
Finally, the time discrete energy functional is defined by
$ \EnergyFunctional_h(\massNodes, \momentum) =
\Action_h(\massNodes,\momentum) +
\indicatorFct_{\CE_h(\massNodes_A,\massNodes_B)}(\massNodes,
\momentum) $
and for the associated time discrete approximation of the
transportation distance one obtains
\begin{align}\label{def:metricDiscrete}
 \W_h(\massNodes_A, \massNodes_B) = \sqrt{\inf \EnergyFunctional_{h}(\massNodes, \momentum)}\,.
\end{align}
\end{Definition}

Note that the indicator function of the discrete continuity equation entails the constraint $(\massNodes,\momentum) \in \FESpaceOneNodes \times \FESpaceZeroEdges$.
These spaces can be represented by finite-dimensional vectors, the operators $\partial_t$ and $\avg_h$ can be represented as finite-dimensional matrices and the continuity equation becomes a finite-dimensional affine constraint. 
Thus, \eqref{def:metricDiscrete} is indeed a finite-dimensional convex optimization problem. Its numerical solution by using proximal mappings will be detailed in Section~\ref{sec:ProximalSplitting}.

\subsection[Gamma-convergence]{$\Gamma$-convergence}

In the following, we will prove a $\Gamma$-convergence result of the discrete energy functional, which will justify our discretization.
First, we construct explicitly continuous and discrete trajectories between an arbitrary probability distribution on $\SetNodes$ and the uniform probability density $\odens \in \Pc(\SetNodes)$ given by $\odens(x) = 1$. We show that these trajectories have uniformly bounded energy, which will be essential in the $\Gamma$-$\limsup$ inequality in Theorem~\ref{thm:GammaConvergence}. Let us define the 
Lagrange interpolation operator $\lindiscr_h: C^0([0,1],\R^\SetNodes) \to \FESpaceOneNodes; \rho \mapsto \lindiscr_h(\rho)$ given by
\begin{align*}
\left(\lindiscr_h \massNodes \right)(t_i,x) := \massNodes(t_i,x) \quad \forall i=0,\ldots,N \, .
\end{align*}

\begin{Proposition}
	\label{prop:FiniteCostToFlatMeasure}
	There is some constant $C(\SetNodes) < \infty$ such that for
        any $\massNodes_A \in \Pc(\SetNodes)$ there is a trajectory
        $(\massNodes,\momentum) \in \CE(\massNodes_A,\odens)$ with
        $\Action(\massNodes,\momentum) \leq C(\SetNodes)$ and
        $(\lindiscr_h\massNodes,\avg_h\momentum) \in
        \CE_h(\massNodes_A,\odens)$
        with
        $\Action_h(\lindiscr_h\massNodes,\avg_h\momentum)) \leq
        C(\SetNodes)$ for every $h=1/N$.
\end{Proposition}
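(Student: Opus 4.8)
The plan is to construct a single trajectory from $\massNodes_A$ to the flat density $\odens$ that simultaneously serves as the continuous competitor and, after applying the interpolation/averaging operators, as the discrete competitor, with the energy bound derived once and for all from the structure of $\theta$ near the constant state $\odens$.

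\textbf{Step 1: construction of the trajectory.} The natural choice is the linear interpolation $\massNodes(t,\cdot) = (1-t)\,\massNodes_A + t\,\odens$ at the level of densities. Then $\partial_t \massNodes = \odens - \massNodes_A$ is constant in $t$, and $\massNodes(t,x) \in [0, \max_x \massNodes_A(x)]$, so $\massNodes(t,\cdot) \in \Pc(\SetNodes)$ for all $t$; moreover $\massNodes(t,x) > 0$ for $t \in (0,1]$ at every node. To produce an admissible momentum one must solve the discrete continuity equation $\partial_t \massNodes + \div_\SetNodes \momentum = 0$ pointwise in $t$; since the graph is strongly connected, $\div_\SetNodes$ is surjective onto the mean-zero functions (and $\partial_t\massNodes$ has mean zero), so a bounded right inverse exists. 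I would fix $\momentum(t,\cdot) = R(\odens - \massNodes_A)$ for a fixed linear right inverse $R$ of $-\div_\SetNodes$; then $\momentum$ is constant in $t$ and antisymmetric (one may symmetrize as in Lemma~\ref{Lem:AntisymmetryM}), with $\|\momentum(t,\cdot)\|$ bounded by a constant depending only on $\SetNodes$ (since $\|\odens - \massNodes_A\|$ is bounded by a constant depending only on $\SetNodes$, as $\massNodes_A$ ranges over the compact set $\Pc(\SetNodes)$).

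\textbf{Step 2: energy bound for the continuous competitor.} Here is the delicate point and the main obstacle: near $t=0$, some components $\massNodes(t,x)$ vanish like $t\,\massNodes_A(x)$ or stay at $0$ if $\massNodes_A(x)=0$, so $\theta(\massNodes(t,x),\massNodes(t,y))$ may degenerate and $\auxilaryFctAction = m^2/\theta$ could blow up. To control this one uses the structure of $\theta$: by $1$-homogeneity and monotonicity, $\theta(\massNodes(t,x),\massNodes(t,y)) \geq \theta(c\,t, c\,t) = c\,t$ on the set of edges where $\min\{\massNodes_A(x),\massNodes_A(y)\} > 0$ — wait, this is not quite enough when one endpoint has $\massNodes_A = 0$. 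The clean fix is to first route all the mass to a strictly positive density: I would instead use a two-phase trajectory. On $[0,1/2]$, flow linearly from $\massNodes_A$ to $\odens$ but \emph{reparametrized} so that the momentum is supported where $\theta$ is bounded below — concretely, note that along the straight line $\massNodes(t,\cdot)$, for any edge $(x,y)\in\SetEdges$ we have $\theta(\massNodes(t,x),\massNodes(t,y)) \geq \theta(t\,\odens(x),\,t\,\odens(y)) = t$ by monotonicity in each argument (since $\massNodes(t,z) = (1-t)\massNodes_A(z) + t \geq t$ for all $z$!). This is the key observation: because $\odens \equiv 1 > 0$, every component of the interpolant is bounded below by $t$, so $\theta(\massNodes(t,x),\massNodes(t,y)) \geq t$ on all edges. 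Hence $\auxilaryFctAction(\massNodes(t,x),\massNodes(t,y),\momentum(t,x,y)) \leq \momentum(t,x,y)^2/t$, and since $\momentum$ is constant in $t$, $\int_0^1 \Action$-integrand-in-$t$ picks up $\int_0^1 t^{-1}\,dt$, which diverges. So a genuinely reparametrized trajectory is needed: replace $t$ by $\tau(t)$ with $\tau(0)=0$, $\tau(1)=1$, chosen so that $\int_0^1 \dot\tau(t)^2/\tau(t)\,dt < \infty$ — e.g. $\tau(t) = t^2$ gives $\int_0^1 (2t)^2/t^2\,dt = 4 < \infty$. With $\massNodes(t,\cdot) = (1-\tau(t))\massNodes_A + \tau(t)\odens$ and $\momentum(t,\cdot) = \dot\tau(t)\,R(\odens-\massNodes_A)$, one gets $\Action(\massNodes,\momentum) \leq \frac12 \|R(\odens-\massNodes_A)\|_Q^2 \int_0^1 \dot\tau(t)^2/\tau(t)\,dt \cdot (\text{const})$, bounded by $C(\SetNodes)$ uniformly in $\massNodes_A$.

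\textbf{Step 3: the discrete competitor.} For the discrete bound, note $\lindiscr_h\massNodes$ is continuous piecewise affine, and $(\lindiscr_h\massNodes, \avg_h\momentum)$ solves the discrete continuity equation because $\partial_t(\lindiscr_h\massNodes)(t_i) = \tfrac1h\int_{\interval i}\partial_t\massNodes\,dt$ and $\div_\SetNodes$ commutes with $\avg_h$, so the identity $\partial_t\massNodes + \div_\SetNodes\momentum = 0$ integrates over each $\interval i$ to give the discrete relation; the boundary values $\lindiscr_h\massNodes(t_0)=\massNodes_A$, $\lindiscr_h\massNodes(t_N)=\odens$ hold by definition of $\lindiscr_h$. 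For the energy, $\Action_h(\lindiscr_h\massNodes,\avg_h\momentum) = \Action(\avg_h\lindiscr_h\massNodes,\avg_h\momentum)$, where $(\avg_h\lindiscr_h\massNodes)(t_i) = \tfrac12(\massNodes(t_i)+\massNodes(t_{i+1}))$ is the midpoint value of the affine interpolant, which again is a convex combination $(1-s_i)\massNodes_A + s_i\odens$ with $s_i = \tfrac12(\tau(t_i)+\tau(t_{i+1})) \geq \tfrac12\tau(t_{i+1}) \geq \tfrac12\int_{t_i}^{t_{i+1}}\dot\tau\,dt/h \cdot(\ldots)$ — more simply $s_i \geq \tau(t_i)$ and $s_i > 0$ for $i \geq 1$; handling $i=0$ requires $\tau(t_1) = \tau(h) > 0$, which holds. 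Then $\theta((\avg_h\lindiscr_h\massNodes)(t_i,x),(\avg_h\lindiscr_h\massNodes)(t_i,y)) \geq s_i$ on every edge by the same lower-bound argument, and using Jensen's inequality for the convex map $\auxilaryFctAction$ together with the relation $\avg_h\momentum(t_i) = \tfrac1h\int_{\interval i}\momentum\,dt$, one concludes $\Action_h(\lindiscr_h\massNodes,\avg_h\momentum) \leq \Action(\massNodes,\momentum) \leq C(\SetNodes)$ — the inequality $\auxilaryFctAction(\avg_h\massNodes,\avg_h\momentum) \leq \avg_h \auxilaryFctAction(\massNodes,\massNodes',\momentum)$ averaged over $\interval i$ is exactly convexity of $\auxilaryFctAction$, which is stated in the paper. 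This last monotonicity-under-averaging step, reconciling the discrete midpoint average with the time-average of the momentum, is the part that needs the most care but is routine given convexity and the uniform lower bound on $\theta$.
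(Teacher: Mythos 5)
Your proposal is essentially correct and reaches the same bound, but it organizes the construction differently from the paper. The core idea is identical: use the quadratic reparametrization $\tau(t)=t^2$ so that every node density is bounded below by $\tau(t)$ (hence $\theta \geq t^2$ by monotonicity and $\theta(s,s)=s$) while the momentum scales like $\dot\tau(t)=2t$, making the integrand $\dot\tau^2/\tau$ bounded. Where you differ is in producing the momentum field: the paper first treats Dirac initial data $\rho_A^x$, builds explicit path flows $M[x,y]$ along edges of the strongly connected graph with entries bounded by $\card\SetNodes/C_*$, and then handles general $\massNodes_A$ by superposition together with sub-additivity of $\Action$ (convexity plus 1-homogeneity); you instead apply a fixed bounded linear right inverse $R$ of $-\div_\SetNodes$ directly to $\odens-\massNodes_A$ and use compactness of $\Pc(\SetNodes)$ for uniformity. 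Your route is shorter and avoids the superposition step; the paper's route has the advantage of producing a fully explicit constant $C(\SetNodes)=2\widetilde C(\SetNodes)^2C^*$, whereas yours leaves the norm of $R$ implicit.

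One step deserves repair. In Step 3 you assert $\Action_h(\lindiscr_h\massNodes,\avg_h\momentum)\leq\Action(\massNodes,\momentum)$ ``by Jensen's inequality for the convex map $\auxilaryFctAction$.'' Jensen compares $\auxilaryFctAction$ evaluated at the \emph{integral} averages $\tfrac1h\int_{\interval{i}}\massNodes\d t$ with the integral of $\auxilaryFctAction$; but the discrete action evaluates $\theta$ at the \emph{trapezoidal} average $\tfrac12(\massNodes(t_i)+\massNodes(t_{i+1}))$ of the interpolated trajectory. For your quadratic-in-$t$ trajectory these two averages differ, and since nodes with $\massNodes_A(z)<1$ move up while nodes with $\massNodes_A(z)>1$ move down as $s$ increases, the comparison of $\theta$ at the two averages has no definite sign; so the inequality $\Action_h\leq\Action$ does not follow from convexity alone. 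However, the bound you actually need follows from the ingredients you already have, exactly as in the paper: $(\avg_h\lindiscr_h\massNodes)(t_i,z)\geq s_i=(i^2+i+\tfrac12)h^2$ gives $\theta\geq s_i$, while $|(\avg_h\momentum)(t_i,x,y)|=\tfrac{\tau(t_{i+1})-\tau(t_i)}{h}|R(\odens-\massNodes_A)(x,y)|\leq(2i+1)h\,C$, and the ratio $(2i+1)^2/(i^2+i+\tfrac12)\leq 4$ is uniformly bounded, yielding $\Action_h\leq 2C^2C^*$ directly. With that substitution your argument is complete.
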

\begin{proof}
For $x \in \SetNodes$ let $\rho^x_A\in \Pc(\SetNodes)$ be the probability density on $\SetNodes$ with all mass concentrated on $x$. That is, 
$\rho^x_A =  \tfrac1{\pi(x)} \delta_x$, where $\delta_x$ is the usual Kronecker symbol with $\delta_x(y) = 1$ if $x=y$ and $0$ else.
\medskip

\emph{Construction of elementary flows:}
For $(x,y) \in \SetNodes \times \SetNodes$, $x\neq y$, with $Q(x,y)>0$ we define $L[x,y] \in \R^{\SetNodes \times \SetNodes}$ as follows:
\begin{align*}
L[x,y](a,b) = \begin{cases}
\frac{1}{Q(x,y)\,\pi(x)} & \tn{if } (a,b) = (x,y), \\
\frac{-1}{Q(x,y)\,\pi(x)} & \tn{if } (a,b) = (y,x), \\
0 & \tn{else.}
\end{cases}
\end{align*}
Then $\div_\SetNodes L[x,y]=\rho^y_A -\rho^x_A$.
Now, for any $(x,y) \in \SetNodes \times \SetNodes$, $x \neq y$, there exists a path $(x=x_0,x_1,\ldots,x_K=y)$ with $K<\card\, \SetNodes$ with $Q(x_k,x_{k+1})>0$ for $k=0,\ldots, K-1$. We can add the corresponding $L(x_k,x_{k+1})$ along these edges to construct a flow $M[x,y]$ with $\div_\SetNodes M[x,y]=\rho^y_A -\rho^x_A$.
All entries of all $M[x,y]$ are bounded (in absolute value) by $\widetilde C(\SetNodes):=\card\SetNodes/C_*$, where $C_*$ is defined in \eqref{eq:constantsC}. For $x=y$, $M[x,x]$ is simply zero.

Now assume $\massNodes_A=\rho_A^x$ for some $x \in \SetNodes$. Let
$\momentum_0 = \sum_{y \in \SetNodes} M[x,y]\,\pi(y)\,$. One finds
\begin{align*}
\div_\SetNodes \momentum_0 = \sum_{y \in \SetNodes} \left(\tfrac1{\pi(y)}  \delta_y -\tfrac1{\pi(x)}  \delta_x\right) \pi(y) = \odens-\rho_A^x\,.
\end{align*}
Again, every entry of $\momentum_0$ is bounded in absolute value by $\widetilde C(\SetNodes)$. Now let
$\momentum(t) =  2\,\momentum_0\,t$,
$\massNodes(t) =  \massNodes_A^x + (\div_\SetNodes \momentum_0)\,t^2 = (1-t^2) \cdot \rho_A^x  + t^2 \cdot \odens\,$.
We find $(\massNodes,\momentum) \in \CE(\rho_A^x,\odens)$.
One has $|\momentum(t,x,y)| \leq t \cdot 2\widetilde C(\SetNodes)$ and $\massNodes(t,x)\geq t^2$ and using the monotonicity of $\auxilaryFctAction$ for the action $\Action$ we get
\begin{align*}
\Action(\massNodes,\momentum) & \leq \frac12 \int_0^1 \sum_{x,y \in \SetNodes} \frac{(t \cdot 2 \widetilde C(\SetNodes))^2}{t^2} Q(x,y)\,\pi(x)\, \d t = 2\widetilde C(\SetNodes)^2C^*\,.
\end{align*}
\medskip

\emph{Construction of discrete counterparts:}
For fixed $h=1/N$ let $\massNodes_h=\lindiscr_h \massNodes$ and $\momentum_h = \avg_h \momentum$.
By construction $(\massNodes_h,\momentum_h) \in \CE_h(\rho_A^x,\odens)$. Then, one finds
$\momentum_h(t_i,x,y)  \leq (i+\tfrac12)\,h\,2\widetilde C(\SetNodes)$,
$\massNodes_h(t_i,x)  \geq i^2\,h^2$, 
$(\avg_h \massNodes_h)(t_i,x)  \geq (i^2 + i + \tfrac12)\,h^2\,$,
and thus
\begin{align*}
\Action_h(\massNodes_h, \momentum_h) & = \Action(\avg_h \massNodes_h,\momentum_h) \leq \frac12 \sum_{i=0}^{N-1}
h \frac{h^2\,4\widetilde C(\SetNodes)^2\,(i+\tfrac12)^2}{h^2 (i^2 + i + \tfrac12)}
\sum_{x,y \in \SetNodes} Q(x,y)\,\pi(x)
\leq 2\widetilde C(\SetNodes)^2C^*\,.
\end{align*}
\medskip

\emph{Extension for arbitrary initial data:}
For given $x \in \SetNodes$ let $(\massNodes^x,\momentum^x)$ be the (continuous) trajectory between $\rho_A^x$ and $\odens$ as constructed above. Any $\massNodes_A$ is a superposition of various $\rho^x_A$:
\begin{align*}
\massNodes_A = \sum_{x \in \SetNodes} \massNodes_A(x) \,\delta_x = \sum_{x \in \SetNodes} \massNodes_A(x) \pi(x) \,\rho^x_A
 \end{align*}
By linearity of the continuity equation the trajectory 
$(\massNodes,\momentum) = \sum_{x \in \SetNodes} \massNodes_A(x)\,\pi(x) \cdot (\massNodes^x,\momentum^x)
$
then lies in $\CE(\massNodes_A,\odens)$.
Since $\Action$ is convex and 1-homogeneous, it is sub-additive. Therefore,
\begin{align*}
\Action(\massNodes,\momentum) & \leq \sum_{x \in \SetNodes} \massNodes_A(x)\,\pi(x)\,\Action(\massNodes^x,\momentum^x) \leq \sum_{x \in \SetNodes} \massNodes_A(x)\,\pi(x)\,2\,\widetilde C(\SetNodes)^2C^*=2\,\widetilde C(\SetNodes)^2C^*\,.
\end{align*}
For the discrete trajectory the reasoning is completely
analogous. Thus the claim follows with
$C(\SetNodes)=2\,\widetilde C(\SetNodes)^2C^*$.
\end{proof}

\begin{Corollary}
	\label{cor:UniformWcBound}
	The above strategy can be used to construct trajectories between arbitrary $\massNodes_A$, $\massNodes_B$ via $\odens$ as intermediate state. This establishes that $\Wc$ and $\Wc_h$ are uniformly bounded on $\Pc(\SetNodes)^2$.
\end{Corollary}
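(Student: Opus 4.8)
The plan is to reduce the two-point bound to the one-point bounds of Proposition~\ref{prop:FiniteCostToFlatMeasure} by routing every transport through the flat measure $\odens$. Proposition~\ref{prop:FiniteCostToFlatMeasure} supplies, for each $\massNodes_A \in \Pc(\SetNodes)$, a pair $(\massNodes,\momentum) \in \CE(\massNodes_A,\odens)$ with $\Action(\massNodes,\momentum) \le C(\SetNodes)$ together with a discrete counterpart $(\lindiscr_h\massNodes,\avg_h\momentum) \in \CE_h(\massNodes_A,\odens)$ with $\Action_h \le C(\SetNodes)$; in particular $\Wc(\massNodes_A,\odens)^2 \le C(\SetNodes)$ and $\Wc_h(\massNodes_A,\odens)^2 \le C(\SetNodes)$ for every $h=1/N$. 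For $\Wc$ the conclusion is then immediate from the triangle inequality: $\Wc$ is a metric on $\Pc(\SetNodes)$ by \cite[Theorem 3.8]{Ma11} and symmetric, so $\Wc(\massNodes_A,\massNodes_B) \le \Wc(\massNodes_A,\odens) + \Wc(\odens,\massNodes_B) \le 2\sqrt{C(\SetNodes)}$, a bound depending only on $\SetNodes$.

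For $\Wc_h$, where no triangle inequality is yet at our disposal, the plan is to concatenate the two one-sided trajectories explicitly. Fix $h=1/N$ with $N$ even; invoke Proposition~\ref{prop:FiniteCostToFlatMeasure} at the coarser scale $h'=2h$ to obtain a discrete trajectory from $\massNodes_A$ to $\odens$ on $[0,1]$ with $N/2$ subintervals, reparametrise time by the factor $\tfrac12$ so that it lives on $[0,\tfrac12]$ on the grid of step $h$, and do the same with the time-reversal of the corresponding trajectory from $\massNodes_B$ to $\odens$, placed on $[\tfrac12,1]$. Time-reversal is harmless because $\auxilaryFctAction(s,t,m)=\auxilaryFctAction(s,t,-m)$; both pieces equal $\odens$ at $t=\tfrac12$, so the concatenation is continuous and piecewise affine in the masses and piecewise constant in the momenta, i.e.~it lies in $\FESpaceOneNodes \times \FESpaceZeroEdges$, and one checks directly from the definition of $\CE_h$ that it solves the discrete continuity equation with boundary data $\massNodes_A,\massNodes_B$. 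Under the rescaling each momentum is doubled while the time step is halved, so $\auxilaryFctAction$ picks up a factor $4$ and the time weight a factor $\tfrac12$; hence each half contributes at most $2C(\SetNodes)$ to $\Action_h$, the concatenated trajectory has $\Action_h \le 4C(\SetNodes)$, and therefore $\Wc_h(\massNodes_A,\massNodes_B) \le 2\sqrt{C(\SetNodes)}$, uniformly in $\massNodes_A$, $\massNodes_B$ and $h$. (The same concatenation bounds $\Wc$ too, so \cite{Ma11} is not strictly needed.)

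The one mildly delicate point — and the main thing to get right — is the interaction of the time rescaling with the fixed grid: Proposition~\ref{prop:FiniteCostToFlatMeasure} produces a trajectory on all of $[0,1]$, so compressing it onto $[0,\tfrac12]$ forces us to call the proposition at the finer scale $h'=2h$ and is cleanest when $N$ is even; for odd $N\ge 3$ one splits instead at $t_{\lfloor N/2\rfloor}$, the two halves carrying $\lfloor N/2\rfloor$ and $\lceil N/2\rceil$ subintervals of step $h$, and the argument goes through verbatim with a marginally larger constant since each half still has length $\ge\tfrac13$ (the degenerate case $N=1$, where $\Wc_1$ can genuinely be infinite, plays no role). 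Apart from this bookkeeping, the continuity-equation check for the reparametrised and concatenated trajectory and the factor-$2$ scaling of $\Action_h$ are routine computations from the definitions of $\CE_h$ and $\Action_h$, and the reparametrisation/concatenation recipe is precisely ``the above strategy'' referred to in the statement.
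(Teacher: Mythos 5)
Your proposal is correct and follows exactly the route the paper intends: the Corollary is stated without proof precisely because ``the above strategy'' means concatenating the two trajectories of Proposition~\ref{prop:FiniteCostToFlatMeasure} through $\odens$, with the $2$-homogeneity of $\auxilaryFctAction$ in the momentum giving the factor-$4$ rescaling of the action and hence the uniform bound $2\sqrt{C(\SetNodes)}$. Your extra bookkeeping on grid compatibility (even versus odd $N$, and the degenerate case $N=1$, which indeed plays no role since the Corollary is only invoked for $h\to 0$) is a careful filling-in of details the paper leaves implicit, not a different argument.
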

\begin{Remark}
	In \cite{Ma11} it is shown that $\Wc$ is bounded if the constant
	$
		C_\theta := \int_0^1 \frac{1}{\sqrt{\theta(1-r,1+r)}} \d r
	$
	is finite. 
	Here, we assumed that $\theta(s,s)=s$ for $s \in \R^+_0$ and that  $s \mapsto \theta(s,t)$  is increasing on $\R^+_0$ for fixed $t \in \R^+_0$ 
	which implies that $\theta(s,t) \geq \min\{s,t\}$ for $s,\,t \in \R^+_0$. This is sufficient for $C_\theta<\infty$.
\end{Remark}
\begin{Theorem}[$\Gamma$-convergence of time discrete energies]
	\label{thm:GammaConvergence}
	Let $\massNodes_A$, $\massNodes_B$ be fixed temporal boundary conditions. Then, the sequence of functionals $(\EnergyFunctional_h)_h$ $\Gamma$-converges for $h \to 0$ to the functional $\EnergyFunctional$ with respect to the weak$\ast$ topology in $\Mc([0,1],\R^{\SetNodes} \times \R^{\SetNodes \times \SetNodes})$.
\end{Theorem}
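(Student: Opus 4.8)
The plan is to verify the two defining inequalities of $\Gamma$-convergence separately. A point used throughout is that, by Lemma~\ref{Lemma:L2} applied slicewise, all relevant sublevel sets $\{\EnergyFunctional\leq\bar E\}$ and $\{\EnergyFunctional_h\leq\bar E\}$ lie in a fixed ball of $H^{1,2}((0,1),\R^{\SetNodes})\times L^2((0,1),\R^{\SetNodes\times\SetNodes})$; hence they are relatively compact and \emph{metrizable} for the weak$\ast$ topology on $\Mc([0,1],\R^{\SetNodes}\times\R^{\SetNodes\times\SetNodes})$, which is exactly what legitimises the diagonal extraction in the last step.

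\emph{The $\liminf$-inequality.} Given $(\massNodes_h,\momentum_h)\weaklyStar(\massNodes,\momentum)$, I may assume $L:=\liminf_h\EnergyFunctional_h(\massNodes_h,\momentum_h)<\infty$ and, along a subsequence realising $L$, $\EnergyFunctional_h(\massNodes_h,\momentum_h)\leq\bar E$. Then $(\massNodes_h,\momentum_h)\in\CE_h\subset\CE(\massNodes_A,\massNodes_B)$, and since $\avg_h\momentum_h=\momentum_h$ on $\FESpaceZeroEdges$ we have $\Action(\avg_h\massNodes_h,\momentum_h)=\Action_h(\massNodes_h,\momentum_h)\leq\bar E$; finiteness of $\Action_h$ forces $\avg_h\massNodes_h(t,\cdot)\geq 0$, and the continuity equation gives $\sum_x\avg_h\massNodes_h(t,x)\pi(x)=1$, so the estimate in the proof of Lemma~\ref{Lemma:L2} applies verbatim to $(\avg_h\massNodes_h,\momentum_h)$ and yields a uniform $L^2$-bound on $\momentum_h$, hence via $\partial_t\massNodes_h=-\div_\SetNodes\momentum_h$ a uniform bound of $\massNodes_h$ in $H^{1,2}\cap C^{0,\frac12}$. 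After a further subsequence $\massNodes_h\to\massNodes$ uniformly (Arzel\`a--Ascoli), $\momentum_h\weakly\momentum$ in $L^2$, and also $\avg_h\massNodes_h\to\massNodes$ uniformly; passing to the limit in the weak continuity equation tested against $C^1$ functions gives $(\massNodes,\momentum)\in\CE(\massNodes_A,\massNodes_B)$ with $\massNodes\geq 0$. Finally $\Action$ is the integral of the jointly convex, lower semicontinuous integrand $\auxilaryFctAction$, with the $\massNodes$-slots converging strongly and the $\momentum$-slot weakly in $L^2$, so the classical sequential lower semicontinuity theorem for convex integral functionals (Ioffe-type) yields $L=\lim_h\Action(\avg_h\massNodes_h,\momentum_h)\geq\Action(\massNodes,\momentum)=\EnergyFunctional(\massNodes,\momentum)$.

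\emph{The $\limsup$-inequality, positive case.} For $(\massNodes,\momentum)$ with $\EnergyFunctional(\massNodes,\momentum)<\infty$ (otherwise nothing is to be shown) Lemma~\ref{Lemma:L2} gives $\massNodes\in C^{0,\frac12}$, $\momentum\in L^2$. The natural candidate is $(\lindiscr_h\massNodes,\avg_h\momentum)$: the identity $(\partial_t\lindiscr_h\massNodes)(t_i)=\tfrac1h(\massNodes(t_{i+1})-\massNodes(t_i))=\tfrac1h\int_{\interval{i}}\partial_t\massNodes=-\div_\SetNodes(\avg_h\momentum)(t_i)$ shows $(\lindiscr_h\massNodes,\avg_h\momentum)\in\CE_h(\massNodes_A,\massNodes_B)$, and $\lindiscr_h\massNodes\to\massNodes$ uniformly together with $\avg_h\momentum\to\momentum$ strongly in $L^2$ gives the required weak$\ast$ convergence. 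Since the $\avg_h$-values of both $\lindiscr_h\massNodes$ and $\momentum$ on $\interval{i}$ are time-averages over $\interval{i}$, Jensen's inequality and joint convexity of $\auxilaryFctAction$ give $\Action_h(\lindiscr_h\massNodes,\avg_h\momentum)=\Action(\avg_h\lindiscr_h\massNodes,\avg_h\momentum)\leq\Action(\lindiscr_h\massNodes,\momentum)$. If $\massNodes$ is bounded below by $\delta>0$, then $\lindiscr_h\massNodes\geq\delta/2$ for small $h$, so by $1$-homogeneity of $\theta$ the integrand is dominated by $2\delta^{-1}|\momentum|^2\in L^1$; as it converges pointwise a.e.\ (continuity of $\auxilaryFctAction$ on $\{\theta>0\}$, the set $\{\theta=0,\,\momentum\neq0\}$ being null by finiteness of $\Action$), dominated convergence gives $\Action(\lindiscr_h\massNodes,\momentum)\to\Action(\massNodes,\momentum)$, hence $\limsup_h\EnergyFunctional_h(\lindiscr_h\massNodes,\avg_h\momentum)\leq\EnergyFunctional(\massNodes,\momentum)$.

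\emph{Removing positivity, and the main obstacle.} For a general finite-energy $(\massNodes,\momentum)$ I approximate it, in energy and weak$\ast$, by curves $(\massNodes^\varepsilon,\momentum^\varepsilon)\in\CE(\massNodes_A,\massNodes_B)$ amenable to the positive case, then conclude via the diagonal lemma using the metrizability above. On $[\sqrt\varepsilon,1-\sqrt\varepsilon]$ take a rescaled convex combination of $(\massNodes,\momentum)$ with the zero-cost stationary curve at $\odens$, which is $\geq\varepsilon$ there and whose action is $\leq\tfrac{1-\varepsilon}{1-2\sqrt\varepsilon}\Action(\massNodes,\momentum)$ by convexity and $2$-homogeneity in $\momentum$; this shifts the temporal boundary data, so near $t=0$ and $t=1$ glue in boundary layers of length $\sqrt\varepsilon$ of the quadratic-profile type used in the proof of Proposition~\ref{prop:FiniteCostToFlatMeasure}, connecting $\massNodes_A$ to $(1-\varepsilon)\massNodes_A+\varepsilon\odens$ and back, whose action is $O(\sqrt\varepsilon)$ and along which $\massNodes^\varepsilon(s,\cdot)\gtrsim s^2$ while $|\momentum^\varepsilon(s,\cdot)|\lesssim s$ near $t=0$ (symmetrically at $t=1$). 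The key point making the $\limsup$ work \emph{on these layers} is that, although $\massNodes^\varepsilon$ is not bounded below there, the interpolant satisfies $\lindiscr_h\massNodes^\varepsilon(s,\cdot)\gtrsim t_i^2$ on $\interval{i}$ (and $\gtrsim sh$ on $\interval{0}$) while $|\momentum^\varepsilon(s,\cdot)|\lesssim s$, so the quotient defining $\auxilaryFctAction$ stays uniformly bounded in $h$ and $s$ and dominated convergence again applies. I expect this last construction --- reconciling the \emph{prescribed, possibly vanishing} boundary data $\massNodes_A,\massNodes_B$ with the positivity needed to run the recovery argument, while controlling the action of the thin boundary layers where $\theta$ degenerates --- to be the genuine technical core; everything else (compactness, stability of the linear continuity equation under weak$\ast$ limits, lower/upper semicontinuity of the convex functional $\Action$) is soft.
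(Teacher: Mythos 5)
Your proof is correct and follows essentially the same route as the paper: weak$\ast$ lower semicontinuity of the jointly convex action for the $\Gamma$-$\liminf$, and for the $\Gamma$-$\limsup$ the same two-step recovery construction (mixing with the uniform density $\odens$ via the quadratic boundary layers of Proposition~\ref{prop:FiniteCostToFlatMeasure}, then piecewise-affine/piecewise-constant time discretization). The only differences are in bookkeeping: the paper couples the regularization parameter to $h$ (choosing $\delta$ grid-aligned in terms of the modulus of continuity) and bounds $\Action_{m,h}$ by the explicit factor $(1+\Delta/\epsilon)\,\Action_m$, whereas you fix $\varepsilon$, control the discretization error by Jensen's inequality applied to the interpolant together with dominated convergence, and then extract a diagonal sequence using weak$\ast$ metrizability on bounded sets of measures.
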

\begin{proof}
To establish $\Gamma$-convergence, we have to verify the $\Gamma$-$\liminf$ and $\Gamma$-$\limsup$ properties.
\medskip

For the $\Gamma$-$\liminf$ property, we have to demonstrate that  the inequality
\begin{align} \label{eq:liminf}
\Action(\massNodes,\momentum)
+ \indicatorFct_{\CE(\massNodes_A,\massNodes_B)}(\massNodes,\momentum)
\leq \liminf_{h \to 0}
\Action_h(\massNodes_h,\momentum_h)
+ \indicatorFct_{\CE_h(\massNodes_A,\massNodes_B)}(\massNodes_h,\momentum_h)
\end{align}
holds for all sequences $(\massNodes_h,\momentum_h) \weaklyStar (\massNodes,\momentum)$ in $\Mc([0,1],\R^{\SetNodes} \times \R^{\SetNodes \times \SetNodes})$.
As $\CE(\massNodes_A,\massNodes_B)$ is weak-$\ast$ closed and $\CE_h(\massNodes_A,\massNodes_B) \subset \CE(\massNodes_A,\massNodes_B)$ the statement is trivial if there is no subsequence with $(\massNodes_h,\momentum_h) \in \CE_h(\massNodes_A,\massNodes_B)$. 
Thus, we may assume that all $(\massNodes_h,\momentum_h)$ fulfill the discrete continuity equation,  that $(\massNodes,\momentum)$
fulfills the continuous continuity equation, and  all $\rho_h$ are non-negative.

Now, $\massNodes_h \weaklyStar \massNodes$ implies $\avg_h \massNodes_h \weaklyStar \massNodes$ and
$\Action_h(\massNodes_h,\momentum_h) = \Action(\avg_h \massNodes_h,\momentum_h)\,$.
Since $\auxilaryFctAction$ is jointly convex and lower semi-continuous in $\massNodes$ and $\momentum$, the action functional $\Action$ is weak-$\ast$ lower semi-continuous and \eqref{eq:liminf} holds. \medskip

To verify the $\Gamma$-$\limsup$ property we need to show that for any $(\massNodes,\momentum) \in \Mc([0,1],\R^{\SetNodes} \times \R^{\SetNodes \times \SetNodes})$ there exists a recovery sequence $(\massNodes_h,\momentum_h) \weaklyStar (\massNodes,\momentum)$ with
\begin{align}\label{eq:limsup}
\limsup_{h \to 0}
\Action_h(\massNodes_h,\momentum_h)
+ \indicatorFct_{\CE_h(\massNodes_A,\massNodes_B)}(\massNodes_h,\momentum_h)
\leq \Action(\massNodes,\momentum)
+ \indicatorFct_{\CE(\massNodes_A,\massNodes_B)}(\massNodes,\momentum)\,.
\end{align}
We may assume $\Action(\massNodes,\momentum)<\infty$ and $(\massNodes,\momentum) \in \CE(\massNodes_A,\massNodes_B)$. 
Using Lemma \ref{Lemma:L2} this implies in particular that $\massNodes \in C^{0,\frac12}([0,1],\R^{\SetNodes})$.
For such a trajectory $(\massNodes,\momentum)$ we will construct a recovery sequence in two steps:
First, the continuous trajectory $(\massNodes,\momentum)$ is regularized, then, the regularized still time continuous trajectory is discretized using local averaging in time.
The regularization is necessary to control the effect of the discontinuity of $\auxilaryFctAction$ at the origin, see \eqref{def:localAction}.

\medskip

Let $(\massNodes_{A,\odens},\momentum_{A,\odens}) \in \CE(\massNodes_A,\odens)$ be the trajectory from $\massNodes_A$ to $\odens$ as constructed in Proposition \ref{prop:FiniteCostToFlatMeasure}, analogously let $(\massNodes_{\odens,B},\momentum_{\odens,B}) \in \CE(\odens,\massNodes_B)$ be the corresponding trajectory from $\odens$ to $\massNodes_B$ with $(\massNodes_{\odens,B},\momentum_{\odens,B})(t,\cdot)  := (\massNodes_{B,\odens},-\momentum_{B,\odens})(1-t,\cdot)$. Then, 
for $\delta \in (0,\frac12)$ and $\epsilon=\delta^2$ we define
\begin{align*}
\massNodes_\delta(t) & = \begin{cases}
(1-\epsilon) \cdot \massNodes_A + \epsilon \cdot \massNodes_{A,\odens}(t/\delta) &
\tn{for } t \in [0,\delta), \\
(1-\epsilon) \cdot \massNodes((t-\delta)/(1-2\delta)) + \epsilon \cdot \odens & 
\tn{for } t \in [\delta,1-\,\delta), \\
(1-\epsilon) \cdot \massNodes_B + \epsilon \cdot \massNodes_{\odens,B}((t-(1-\delta))/\delta) & \tn{for } t \in [1-\delta,1]
\end{cases} \\
\intertext{and}
\momentum_\delta(t) & = \begin{cases}
\epsilon \cdot \delta^{-1} \cdot \momentum_{A,\odens}(t/\delta) & 
\tn{for } t \in [0,\delta), \\
(1-\epsilon) \cdot (1-2\delta)^{-1} \cdot \momentum((t-\delta)/(1-2\delta)) & 
\tn{for } t \in [\delta,1-\,\delta), \\
\epsilon \cdot \delta^{-1} \cdot \momentum_{\odens,B}((t-(1-\delta))/\delta) & \tn{for } t \in [1-\delta,1]\,.
\end{cases}
\end{align*}
One finds that $(\massNodes_\delta,\momentum_\delta) \in \CE(\massNodes_A,\massNodes_B)$.
To evaluate the action of $(\massNodes_\delta,\momentum_\delta)$ we decompose it into the contributions of the time intervals $\interval{l}=[0,\delta]$, $\interval{m}=[\delta,1-\delta]$ and $\interval{r}=[1-\delta,1]$:
\newcommand{\ActionInt}{\Ac^{\tn{int}}}
\begin{align*}
\Action(\massNodes_\delta,\momentum_\delta)=\Action_l + \Action_m + \Action_r
\qquad \tn{with} \qquad
\Action_\chi = \int_{\interval{\chi}} \ActionInt(\massNodes_\delta(t),\momentum_\delta(t))\d t
\tn{ for } \chi \in \{l,m,r\}\,.
\end{align*}
where
\begin{align*}
\ActionInt & : \R^{\SetNodes} \times \R^{\SetNodes \times \SetNodes} \to \RCupInf, &
(\massNodes,\momentum) & \mapsto \frac12 \sum_{x,y \in \SetNodes} \auxilaryFctAction\big( \massNodes(x), \massNodes(y), \momentum(x,y) \big) \, Q(x,y) \, \pi(x)\,.
\end{align*}
$\ActionInt$ is jointly convex and 1-homogeneous and therefore sub-additive. Moreover, it is 2-homogeneous in the second argument. Therefore we obtain
\begin{align*}
\nonumber
\Action_m & \leq \frac{(1-\epsilon)}{(1-2\delta)^{2}} \int_{\interval{m}}
\ActionInt\big(
\massNodes((t-\delta)/(1-2\delta)),
\momentum((t-\delta)/(1-2\delta))
\big)
\d t \\\label{eq:LimSupEMBound}
&= \frac{(1-\epsilon)}{(1-2\delta)}
\int_0^1 \ActionInt\big(\massNodes(t),\momentum(t)\big) \d t
= \frac{(1-\epsilon)}{(1-2\delta)}
\Action(\massNodes,\momentum)\,.
\end{align*}
Further, using Proposition  \ref{prop:FiniteCostToFlatMeasure} we obtain $\Action_{l} + \Action_{r} \leq 2 \,C(\SetNodes) \, \delta$.

\medskip
Next, we discretize in time. Since
$\massNodes \in C^{0,\frac12}([0,1],\R^{\SetNodes})$ we have
$|\massNodes(t,x)-\massNodes(t',x)|\leq g(|t-t'|)$ with
$g(s):= C \cdot s^{\frac12}$ for all $x \in \SetNodes$. Now let
$\Delta=g(2h)$ and choose the regularization parameter
$\delta=\min\{i \cdot h \,\colon\, i \in \N,\, i\cdot h \geq
\Delta^{\frac14} \}$
and as before $\epsilon=\delta^2$. Obviously $\Delta$, $\delta$ and
$\epsilon \to 0$ as $h \to 0$. In particular, for $h$ sufficiently
small $2 \geq 1/(1-2\delta)$ and thus
$\Delta=g(2h)\geq g(h/(1-2\delta))$. Therefore, $\Delta$ is a uniform
upper bound for the variation of $\massNodes_\delta$ on any interval
of the size $h$. We now set
\begin{align*}
\massNodes_h  = \lindiscr_h \massNodes_\delta, \quad
\momentum_h  = \avg_h \momentum_\delta\,,
\end{align*}
and note that
$(\massNodes_h,\momentum_h) \in \CE_h(\massNodes_A,\massNodes_B)$.  As
$\delta \to 0$ one finds
$(\massNodes_\delta,\momentum_\delta) \weaklyStar
(\massNodes,\momentum)$
and for $h \to 0$ we obtain
$(\massNodes_\delta-\massNodes_h,\momentum_\delta-\momentum_h)
\weaklyStar 0$.
This implies that
$(\massNodes_h,\momentum_h) \weaklyStar (\massNodes,\momentum)$.
\medskip

Note that $\delta$ was chosen to be an integer multiple of $h$. So the division of $[0,1]$ into the three intervals $[0,\delta]$, $[\delta,1-\delta]$ and $[1-\delta,1]$ in the construction of $(\massNodes_\delta,\momentum_\delta)$ is compatible with the grid discretization of step size $h$.
Therefore, as above, the discrete action decomposes into three contributions which we denote 
$\Action_h(\massNodes_h,\momentum_h)=\Action_{l,h} + \Action_{m,h} + \Action_{r,h}$.
Again, using joint 1-homogeneity and sub-additivity of $\auxilaryFctAction$, as well as the 2-homogeneity in the second argument one obtains
\begin{align*}
\Action_{l,h} \leq \frac{\epsilon}{\delta} \cdot \Action_h(\lindiscr_h \massNodes_{A,\odens}, \avg_h \momentum_{A,\odens}),\quad 
\Action_{r,h} \leq \frac{\epsilon}{\delta} \cdot \Action_h(\lindiscr_h \massNodes_{\odens, B}, \avg_h \momentum_{\odens, B})\,.
\end{align*}
Using Proposition \ref{prop:FiniteCostToFlatMeasure} we observe that
\begin{align*}
\Action_{l,h} + \Action_{r,h} \leq 2 \,C(\SetNodes) \, \delta\,.
\end{align*}
In view of \eqref{eq:LimSupEMBound}, it remains to estimate
$\Action_{m,h}$ by a suitable constant times $\Action_m$.
\newcommand{\intervalsM}{S_m} To this end, let
$\intervalsM \subset \{0,\ldots,N-1\}$ the the set of discrete indices
such that $\interval{i} \subset \interval{m}$ for $i \in \intervalsM$.
Then $\Action_m$ is given by
\begin{align*}
\Action_m = \frac12 \sum_{i\in\intervalsM} \sum_{x,y \in \SetNodes}
\left[ \int_{\interval{i}}
\auxilaryFctAction\big(\massNodes_\delta(t,x),\massNodes_\delta(t,y),\momentum_\delta(t,x,y)\big)\,\d t
\right] Q(x,y)\,\pi(x)\,.
\end{align*}
Since $\auxilaryFctAction$ is convex, by Jensen's inequality one finds
\begin{align*}
\int_{\interval{i}} \auxilaryFctAction\big(\massNodes_\delta(t,x),\massNodes_\delta(t,y),\momentum(t,x,y)\big)\,\d t
&\geq h \cdot
\auxilaryFctAction\big( (\avg_h \massNodes_\delta)(t_i,x), (\avg_h \massNodes_\delta)(t_i,y), (\avg_h \momentum_\delta)(t_i,x,y) \big) \,.
\end{align*}
The discretized action $\Action_{m,h}$ is a weighted sum of the form
\begin{align*}
\Action_{m,h} & =
\frac12 \sum_{i \in \intervalsM} \sum_{x,y \in \SetNodes} 
h \cdot \auxilaryFctAction \big((\avg_h \lindiscr_h \massNodes_\delta)(t_i,x), (\avg_h \lindiscr_h \massNodes_\delta)(i,y), \avg_h m_\delta)(t_i,x,y) Q(x,y)\,\pi(x) \,.
\end{align*}
By construction $\massNodes_\delta$ is bounded from below by $\epsilon$ on $\interval{m}$ on all nodes and its variation within each discretization interval is bounded by $\Delta$. Therefore, for any $i \in \intervalsM$, $z \in \SetNodes$ one finds
\begin{align*}
(\avg_h \massNodes_\delta)(t_i,z) & \leq (\avg_h \lindiscr_h \massNodes_\delta)(t_i,z) + \Delta\,, \quad 
(\avg_h \lindiscr_h \massNodes_\delta)(t_i,z)  \geq \epsilon\,.
\end{align*}
Due to the monotonicity of $s\to \tfrac{s}{s+\Delta}$ we obtain 
\begin{align*}
\frac{(\avg_h \lindiscr_h \massNodes_\delta)(t_i,z)}{(\avg_h \massNodes_\delta)(t_i,z)}
\geq \frac{(\avg_h \lindiscr_h \massNodes_\delta)(t_i,z)}{(\avg_h \lindiscr_h \massNodes_\delta)(t_i,z) + \Delta}
\geq \frac{\epsilon}{\epsilon + \Delta}.
\end{align*}
Taking into account the joint 1-homogeneity of $\theta$ and the monotonicity of $\theta$ in each single argument this implies for all $x,y \in \SetNodes$ that
\begin{align*}
\frac{\theta\big(
(\avg_h \lindiscr_h \massNodes_\delta)(t_i,x),
(\avg_h \lindiscr_h \massNodes_\delta)(t_i,y)\big)
}{
\theta\big(
(\avg_h \massNodes_\delta)(t_i,x),
(\avg_h \massNodes_\delta)(t_i,y)\big)
}
\geq \frac{\epsilon}{\epsilon + \Delta} = \frac{1}{1 + \Delta/\epsilon}\,.
\end{align*}
Hence, 
\begin{align*}
\Action_{m,h} & =
\frac12 \sum_{i \in \intervalsM} \sum_{x,y \in \SetNodes} 
h \cdot \frac{(\avg_h m_\delta)^2(t_i,x,y)}{\theta((\avg_h \lindiscr_h \massNodes_\delta)(t_i,x), (\avg_h \lindiscr_h \massNodes_\delta)(t_i,y))} Q(x,y)\,\pi(x) \\
&\leq \frac12 (1 + \Delta / \epsilon) \sum_{i \in \intervalsM} \sum_{x,y \in \SetNodes} 
h \cdot \frac{(\avg_h m_\delta)^2(t_i,x,y)}{\theta((\avg_h  \massNodes_\delta)(t_i,x), (\avg_h  \massNodes_\delta)(t_i,y))} Q(x,y)\,\pi(x)  = (1 + \Delta / \epsilon) \Action_{m}\,.
\end{align*}
Our choice of $\delta$ implies that $\epsilon = \delta^2 \geq \Delta^\frac12$ and thus $\Delta/\epsilon \leq \epsilon$. 
Altogether, we obtain for $h$ sufficiently small 
\begin{align*}
\Action_h(\massNodes_h,\momentum_h)  =\Action_{l,h} + \Action_{m,h} + \Action_{r,h} \leq 2\, C(\SetNodes) \, \delta + 
(1 +  \epsilon) \, \frac{1-\epsilon}{1-2\delta} \, \Action(\massNodes,\momentum)\,.
\end{align*}
Since $\delta \to 0$, $\epsilon \to 0$ as $h \to 0$, this establishes the $\Gamma$-$\limsup$ property.
\end{proof}

Next, we establish convergence of the discrete optimizers to a
continuous solution.  To establish compactness we first show a uniform
bound for the $L^2$ norm of the discrete momenta, in analogy to Lemma
\ref{Lemma:L2}.

\begin{Lemma}[$L^2$ bound for the discrete momentum]
\label{Lemma:L2Disc}
Let $(\massNodes_h,\momentum_h) \in \FESpaceOneNodes \times \FESpaceZeroEdges$ with discrete energy $\EnergyFunctional_h(\massNodes_h,\momentum_h) \leq \ol{E} < \infty$.
Then, there exists a constant $\ol{M}<\infty$ only depending on $(\SetNodes,Q,\pi)$ and $\ol{E}$ (and not on $h$), 
such that $\|m_h\|_{L^2([0,1],\R^{\SetNodes \times \SetNodes})} \leq \ol{M}$.
\end{Lemma}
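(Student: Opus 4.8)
The strategy mirrors the proof of Lemma~\ref{Lemma:L2}, with the subtlety that in the discrete setting the action is evaluated at the \emph{time-averaged} masses $\avg_h\massNodes_h$ rather than at $\massNodes_h$ itself, so I must first relate $\theta$ evaluated at averaged masses to the averaged masses and then control everything through the discrete continuity equation. The key quantitative input I would isolate at the start: since $\massNodes_h\in\FESpaceOneNodes$ is continuous, piecewise affine and nonnegative (nonnegativity follows because $\EnergyFunctional_h(\massNodes_h,\momentum_h)<\infty$ forces finiteness of the action, hence $\theta((\avg_h\massNodes_h)(t_i,x),(\avg_h\massNodes_h)(t_i,y))>0$ wherever $(\avg_h\momentum_h)(t_i,x,y)\neq0$, but more simply because one argues as in Lemma~\ref{Lemma:L2} that the discrete continuity equation with a constant test function preserves total mass $\sum_x\massNodes_h(t,x)\pi(x)=1$ and the affine interpolation of the two prescribed probability densities stays in $\Pc(\SetNodes)$), we have $(\avg_h\massNodes_h)(t_i,x)=\tfrac12(\massNodes_h(t_i,x)+\massNodes_h(t_{i+1},x))\ge 0$ and $\sum_x(\avg_h\massNodes_h)(t_i,x)\pi(x)=1$ for each $i$.

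\textbf{Step 1 (pointwise-in-time bound on $\sum\theta$).} For each interval index $i$ I apply the exact same concavity--symmetry argument as in \eqref{eq:ThetaMonotonictyArg}, but to the averaged masses: using $\theta(s,t)\le\tfrac{s+t}{2}$ (from symmetry, concavity and $\theta(s,s)=s$) and the detailed balance $Q(x,y)\pi(x)=Q(y,x)\pi(y)$,
\begin{align*}
  \sum_{x,y\in\SetNodes}\theta\big((\avg_h\massNodes_h)(t_i,x),(\avg_h\massNodes_h)(t_i,y)\big)\,Q(x,y)\,\pi(x)
  \;\le\; C^*\sum_{x\in\SetNodes}(\avg_h\massNodes_h)(t_i,x)\,\pi(x)\;=\;C^*\,.
\end{align*}

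\textbf{Step 2 (Cauchy--Schwarz on each interval).} On interval $\interval{i}$, writing $\massElement_i(z):=(\avg_h\massNodes_h)(t_i,z)$ and $m_i(x,y):=(\avg_h\momentum_h)(t_i,x,y)$, Cauchy--Schwarz with respect to the measure $Q(x,y)\pi(x)$ gives, exactly as in \eqref{eq:L2BoundM},
\begin{align*}
  \Big(\sum_{x,y}|m_i(x,y)|\,Q(x,y)\,\pi(x)\Big)^2
  \;\le\;\Big(\sum_{x,y}\auxilaryFctAction\big(\massElement_i(x),\massElement_i(y),m_i(x,y)\big)Q(x,y)\pi(x)\Big)\cdot C^*\,,
\end{align*}
and also $\sum_{x,y}m_i(x,y)^2Q(x,y)\pi(x)\le\tfrac{1}{C_*}\big(\sum_{x,y}|m_i(x,y)|Q(x,y)\pi(x)\big)^2$ since each weight $Q(x,y)\pi(x)\ge C_*$ on $\SetEdges$ (and $m_i$ may be taken to vanish off $\SetEdges$, cf.\ Remark~\ref{rem:Sparsity}). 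Multiplying by $h$ and summing over $i=0,\dots,N-1$, the right-hand action sum telescopes into $\tfrac{2}{C_*}\,\Action_h(\massNodes_h,\momentum_h)\cdot C^*\le\tfrac{2C^*}{C_*}\ol E$.

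\textbf{Step 3 (from $\avg_h\momentum_h$ back to $\momentum_h$).} Since $\momentum_h\in\FESpaceZeroEdges$ is piecewise constant, $(\avg_h\momentum_h)(t_i,\cdot)=\momentum_h(t_i,\cdot)$, so in fact $\|\momentum_h\|_{L^2([0,1],\R^{\SetNodes\times\SetNodes})}^2=h\sum_{i}\sum_{x,y}\momentum_h(t_i,x,y)^2$, and comparing with the weighted norm costs only the fixed factor $\min_{(x,y)\in\SetEdges}Q(x,y)\pi(x)=C_*$ (and there are no other nonzero entries). Hence $\|\momentum_h\|_{L^2}^2\le\tfrac{1}{C_*}\cdot\tfrac{2C^*}{C_*}\ol E=:\ol M^2$, a constant depending only on $(\SetNodes,Q,\pi)$ and $\ol E$. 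This is exactly the claim.

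\textbf{Main obstacle.} The only genuinely delicate point, and the one I would spell out carefully, is that the action is evaluated at $\avg_h\massNodes_h$, not at $\massNodes_h$; one must make sure the mass-conservation and nonnegativity facts needed in Step~1 are available \emph{for the averaged masses}, which is why I deduce them from testing the discrete continuity equation with a constant $\LagrangeMultiplierCE_h$ and from the affineness of $\massNodes_h$. Everything else is a line-by-line transcription of Lemma~\ref{Lemma:L2}, with sums over $i$ replacing the time integral and the telescoping/Jensen structure of $\Action_h=\Action(\avg_h\massNodes_h,\avg_h\momentum_h)$ doing the rest; no $h$-dependence enters because the constants $C^*,C_*$ are intrinsic to $(\SetNodes,Q,\pi)$.
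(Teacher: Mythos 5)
Your argument is correct and follows the paper's proof essentially line for line: the same Cauchy--Schwarz splitting against $\theta$, the same bound $\sum_{x,y}\theta(\avg_h\massNodes_h(t_i,x),\avg_h\massNodes_h(t_i,y))Q(x,y)\pi(x)\leq C^*$ via concavity, detailed balance, nonnegativity and mass conservation of the averaged densities, and the same passage to the unweighted $L^2$ norm via the constants $C_*,C^*$ and finiteness of $\SetNodes$. The only difference is that you make the final norm-comparison step explicit where the paper simply invokes finite-dimensionality, which is a harmless (indeed slightly more careful) elaboration.
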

\begin{proof}
The proof works in complete analogy to Lemma \ref{Lemma:L2}. We bound
\begin{multline*}
\Big( \sum_{x,y \in \SetNodes} |\momentum_h(t_i,x,y)| Q(x,y) \pi(x) \Big)^2
\leq  \Big( \sum_{x,y \in \SetNodes} \auxilaryFctAction( \avg_h \massNodes_h(t_i,x),\avg_h
\massNodes_h(t_i,y), \momentum(t_i,x,y) ) Q(x,y) \pi(x) \Big) \\
 	{} \cdot  \Big( \sum_{x,y \in \SetNodes} \theta(\avg_h \massNodes_h(t_i,x), \avg_h
 		\massNodes(t_i,y)) Q(x,y) \pi(x) \Big)
\end{multline*}
and
\begin{align*}
\sum_{x,y \in \SetNodes} \theta(\avg_h \massNodes_h(t_i,x), \avg_h
 		\massNodes(t_i,y))\, Q(x,y) \pi(x)
\leq  C^*\,,
\end{align*}
where $C^*$ is defined in \eqref{eq:constantsC}. Here, we have used
that $(\massNodes_h,\momentum_h) \in \CE(\massNodes_A, \massNodes_B)$
which implies that mass is preserved, \ie
$\sum_{x \in \SetNodes} \avg_h \massNodes_h(t_i,x) \pi(x) = \sum_{x
  \in \SetNodes} \massNodes_h(t_i+\frac{h}{2},x) \pi(x) = \sum_{x \in
  \SetNodes} \massNodes_A(x) \pi(x) = 1$
for all $i=0,\ldots, N-1$, and that since
$\Action_h(\massNodes_h,\momentum_h)<\infty$ one has
$\avg_h \massNodes_h \geq 0$.  Now, once more using that $\SetNodes$
is finite and integrating (or summing) in time establishes the bound.
\end{proof}

\begin{Theorem}[Convergence of discrete geodesics]
\label{thm:Convergence} 
For fixed temporal boundary conditions $\massNodes_A$, $\massNodes_B$
any sequence $(\massNodes_h,\momentum_h)$ of minimizers of
$\EnergyFunctional_h$ is uniformly bounded in
$C^{0,\frac12}([0,1],\R^\SetNodes) \times L^2((0,1),\R^{\SetNodes
  \times \SetNodes})$
for $h \to 0$. Up to selection of a subsequence,
$\massNodes_h \to \massNodes$ strongly in
$C^{0,\alpha}([0,1],\R^\SetNodes)$ for any $\alpha\in0,\frac12)$ and
$\momentum_h \to \momentum$ weakly in $L^2$ with
$( \massNodes,\momentum)$ being a minimizer of the energy
$\EnergyFunctional$.
\end{Theorem}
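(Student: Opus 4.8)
The statement is a textbook "compactness plus $\Gamma$-convergence $\Rightarrow$ convergence of minimizers", built on the a priori bounds already established. First I would record that discrete minimizers exist: $\Action_h$ is convex and lower semicontinuous on the affine set $\CE_h(\massNodes_A,\massNodes_B) \subset \FESpaceOneNodes \times \FESpaceZeroEdges$, and its sublevel sets there are compact because $\momentum_h$ is bounded (Lemma~\ref{Lemma:L2Disc}) and then, via $\partial_t\massNodes_h + \div_\SetNodes \momentum_h = 0$ together with the fixed value $\massNodes_h(t_0,\cdot)=\massNodes_A$, the nodal values of $\massNodes_h$ are bounded as well. Next, comparing each minimizer $(\massNodes_h,\momentum_h)$ with the explicit discrete competitor joining $\massNodes_A$ to $\massNodes_B$ through the flat measure $\odens$ (Proposition~\ref{prop:FiniteCostToFlatMeasure} and Corollary~\ref{cor:UniformWcBound}) yields a uniform bound $\EnergyFunctional_h(\massNodes_h,\momentum_h) \leq \ol{E} < \infty$, and in particular $\inf\EnergyFunctional < \infty$.

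\textbf{Compactness.} From $\EnergyFunctional_h(\massNodes_h,\momentum_h)\leq\ol E$ and Lemma~\ref{Lemma:L2Disc} we get $\|\momentum_h\|_{L^2((0,1),\R^{\SetNodes\times\SetNodes})}\leq\ol M$ uniformly in $h$. Since $(\massNodes_h,\momentum_h)\in\CE_h(\massNodes_A,\massNodes_B)\subset\CE(\massNodes_A,\massNodes_B)$, the relation $\partial_t\massNodes_h + \div_\SetNodes\momentum_h = 0$ holds a.e., so exactly as in the proof of Lemma~\ref{Lemma:L2} one bounds $\partial_t\massNodes_h$ in $L^2$ by $C^*\|\momentum_h\|_{L^2}^2$; combined with the fixed endpoints this gives a uniform bound for $\massNodes_h$ in $H^{1,2}((0,1),\R^\SetNodes)$, hence (Sobolev embedding, finite-dimensional target) a uniform bound in $C^{0,\frac12}([0,1],\R^\SetNodes)$. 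By Arzel\`a--Ascoli together with the compact embedding $C^{0,\frac12}\hookrightarrow C^{0,\alpha}$ for $\alpha\in(0,\frac12)$, a subsequence satisfies $\massNodes_h\to\massNodes$ strongly in $C^{0,\alpha}$; passing to a further subsequence, $\momentum_h\weakly\momentum$ in $L^2$. These two convergences imply $(\massNodes_h,\momentum_h)\weaklyStar(\massNodes,\momentum)$ in $\Mc([0,1],\R^\SetNodes\times\R^{\SetNodes\times\SetNodes})$, and since $\CE(\massNodes_A,\massNodes_B)$ is weak-$\ast$ closed, $(\massNodes,\momentum)\in\CE(\massNodes_A,\massNodes_B)$.

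\textbf{Identification of the limit.} Applying the $\Gamma$-$\liminf$ inequality of Theorem~\ref{thm:GammaConvergence} along $(\massNodes_h,\momentum_h)\weaklyStar(\massNodes,\momentum)$ gives $\EnergyFunctional(\massNodes,\momentum)\leq\liminf_{h}\EnergyFunctional_h(\massNodes_h,\momentum_h)$. For the reverse estimate, fix $\eta>0$ and pick $(\tilde\massNodes,\tilde\momentum)$ with $\EnergyFunctional(\tilde\massNodes,\tilde\momentum)<\inf\EnergyFunctional+\eta$; the $\Gamma$-$\limsup$ part of Theorem~\ref{thm:GammaConvergence} supplies a recovery sequence $(\hat\massNodes_h,\hat\momentum_h)$ with $\limsup_h\EnergyFunctional_h(\hat\massNodes_h,\hat\momentum_h)\leq\EnergyFunctional(\tilde\massNodes,\tilde\momentum)$. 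Minimality of $(\massNodes_h,\momentum_h)$ gives $\EnergyFunctional_h(\massNodes_h,\momentum_h)\leq\EnergyFunctional_h(\hat\massNodes_h,\hat\momentum_h)$, so $\liminf_h\EnergyFunctional_h(\massNodes_h,\momentum_h)\leq\inf\EnergyFunctional+\eta$. Combining, $\EnergyFunctional(\massNodes,\momentum)\leq\inf\EnergyFunctional+\eta$, and letting $\eta\to0$ shows that $(\massNodes,\momentum)$ minimizes $\EnergyFunctional$ (and en passant $\Wc_h(\massNodes_A,\massNodes_B)\to\Wc(\massNodes_A,\massNodes_B)$).

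\textbf{Main obstacle.} The only genuinely delicate point is the compactness step: extracting from the purely discrete energy bound a uniform $C^{0,\frac12}$ estimate for $\massNodes_h$, i.e. checking that the piecewise-affine structure of $\massNodes_h$ and the discrete continuity equation really produce an $H^{1,2}$ bound with a constant independent of $h$, and that no constant degenerates as $h\to0$. Everything else is the standard fundamental theorem of $\Gamma$-convergence, and it is worth noting that the compactness we obtain is exactly of the type the lower semicontinuity of $\Action$ requires, namely strong convergence of the density component together with weak $L^2$ convergence of the momentum.
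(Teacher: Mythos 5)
Your proposal is correct and follows essentially the same route as the paper: a uniform energy bound from Corollary~\ref{cor:UniformWcBound}, the $L^2$ momentum bound of Lemma~\ref{Lemma:L2Disc}, the discrete continuity equation to upgrade to an $H^{1,2}$ and hence $C^{0,\frac12}$ bound on $\massNodes_h$, and the standard $\Gamma$-convergence argument (which you spell out via liminf/limsup where the paper simply cites it) to identify the limit as a minimizer. The only cosmetic difference is the order of steps — the paper first extracts a weak-$\ast$ limit and identifies it, then strengthens the convergence, while you establish the strong compactness first — but the ingredients and logic are identical.
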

\begin{proof} 
For a sequence of minimizers $(\massNodes_h,\momentum_h)$ the discrete energy $\EnergyFunctional_h(\massNodes_h,\momentum_h)$ is uniformly bounded by Corollary \ref{cor:UniformWcBound} .
Since $(\massNodes_h,\momentum_h) \in \CE(\massNodes_A,\massNodes_B)$ the total variation of all $\massNodes_h$ is uniformly bounded. Further, by Lemma \ref{Lemma:L2Disc} the $L^2$ norm $\|\momentum_h\|_{L^2([0,1],\R^{\SetNodes \times \SetNodes})}$ is uniformly bounded. 
Hence, the sequence $(\massNodes_h,\momentum_h)_h$ has a weakly$\ast$ (in the sense of measures) convergent subsequence, which by Theorem \ref{thm:GammaConvergence} and a standard  consequence of  $\Gamma$ convergence theory converges weakly$\ast$ to some minimizer $(\massNodes,\momentum)$ of $\EnergyFunctional$).
\medskip

Using the continuity equation this convergence can be strengthened.
We already know that $(\massNodes_h,\momentum_h)$ solves the continuity equation $\partial_t \massNodes_h = - \div_\SetNodes \momentum_h$. 
Thus, the uniform bound for $\momentum_h$ in $L^2((0,1),\R^{\SetNodes \times \SetNodes})$ implies that $\massNodes_h$ is uniformly bounded in $H^{1,2}(\R^\SetNodes)$.
From this we obtain by the Sobolev embedding theorem that  $(\massNodes_h)_h$ is uniformly bounded in $C^{0,\frac12}(\R^\SetNodes)$ and compact in $C^{0,\alpha}(\R^\SetNodes)$ for all $\alpha \in [0,\frac12)$.
\end{proof}

\section{Optimization with Proximal Splitting}
\label{sec:ProximalSplitting}
\subsection{Slack Variables and Proximal Splitting}
The computation of the discrete transportation distance
\eqref{def:metricDiscrete} and the associated transport path require
the solution of a finite-dimensional non-smooth convex optimization
problem.  To this end, we apply a proximal splitting approach with
suitably choosen slack variables.  The proximal mapping of a convex
and lower semi-continuous function $f:H \to \R \cup \{ \infty \}$ on a
Hilbert space $H$ with norm $\NormH{\cdot}$ is defined as
 \begin{align}
  \prox_f(x) = \argmin_{y \in H} \frac{1}{2} \NormH{x-y}^2 + f(y)\,.
 \end{align}
Furthermore, the indicator function of a closed convex set $K \subset H$ is given by 
$\indicatorFct_K(x) = 0$ for $x\in K$ and $\infty$ elsewise.
In particular, $\prox_{\indicatorFct_K} =  \proj_K$, where $\proj_K$ is the projection onto $K$. For a function $f : H \mapsto \RCupInf$ its Fenchel conjugate is given by
\begin{align}
	f^\ast(y) = \sup_{x \in H} \, \InProdH{y}{x} - f(x)\,.
\end{align}
If $f(x)<\infty$ for some $x \in H$, then $f^\ast$ is convex and lower semi-continuous.
For more details and an introduction to convex analysis see e.g.~\cite{ConvexFunctionalAnalysis-11}.
The practical applicability of proximal splitting schemes depends on whether the objective can be split into terms such that the proximal mapping for each term can be computed efficiently.
In \cite{PaPe14} a spatiotemporal discretization with staggered grids of the classical Benamou--Brenier formulation \cite{BeBr00} of optimal transport of Lebesgue densities on $\R^n$ was presented and several proximal splitting methods were considered to solve the discrete problem.
However, this approach can not directly be transfered to problem \eqref{def:metricDiscrete} since the action $\Action$ couples the variables $\massNodes$ and $\momentum$ in a non-linear way via the terms $\alpha(\momentum(t_i,x,y), \avg_h \massNodes(t_i,x), \avg_h \massNodes(t_i,y))$ spatially over the whole graph according to the transition kernel $Q$ 
and temporally via the averaging operator $\avg_h$.
Thus, the proximal mapping of the $\Action$-term is not separable in space or time and thus requires the solution of a fully coupled, nonlinear minimization problem.
As a remedy, we propose to introduce auxiliary variables to decouple the variables and rewrite the action $\Action$ with terms where variables only interact locally, thus leading to separable, hence simpler, proximal mappings.

\begin{Lemma}
For $(\massNodes,\momentum) \in \FESpaceOneNodes \times \FESpaceZeroEdges$ one finds
\begin{align}
\label{eq:actionReform1}
\Action_h(\massNodes,\momentum)  = \Action(\avg_h\massNodes,\momentum) =
\inf\left\{\ActionU(\massEdges,\momentum) + \indicatorFct_{\SetKPre}(\avg_h \massNodes,\massEdges)
\colon \massEdges \in \FESpaceZeroEdges \right\}
\end{align}
with the convex set
\begin{align}
 \SetKPre  := \left\{ (\massElement, \massEdges ) \in \FESpaceZeroNodes\times\FESpaceZeroEdges \, \colon \,
0 \leq \massEdges(t_i,x,y) \leq \theta(\massElement(t_i,x), \massElement(t_iy))
\, \forall i=0,\ldots,N-1, \ \forall x,y \in \SetNodes \right\}
\end{align}
and the edge-based action
\begin{align}
\label{def:ActionU}
\ActionU(\massEdges,\momentum) & := \frac12 \int_0^1 \sum_{x,y \in \SetNodes}
\transportCostFunction(\massEdges(t,x,y),\momentum(t,x,y)) \, Q(x,y)\,\pi(x) \d t\\
& \text{with} \quad
\transportCostFunction(\massEdges,\momentum) := \begin{cases}
\frac{\momentum^2}{\massEdges} & \text{if } \massEdges>0, \\
0 & \text{if } (\momentum,\massEdges) = (0,0), \nonumber \\
+\infty & \text{else.}
\end{cases}
\end{align}
\end{Lemma}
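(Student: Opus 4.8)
The strategy is to reduce \eqref{eq:actionReform1} to a purely scalar statement and then exploit that everything in sight is piecewise constant in time, so that the infimum over $\massEdges$ decouples into independent one-dimensional minimizations, one per index $(i,x,y)\in\{0,\dots,N-1\}\times\SetNodes\times\SetNodes$. As a preliminary step, note that for $\momentum\in\FESpaceZeroEdges$ we have $\avg_h\momentum=\momentum$, hence $\Action_h(\massNodes,\momentum)=\Action(\avg_h\massNodes,\avg_h\momentum)=\Action(\avg_h\massNodes,\momentum)$, which is the first equality. For $\massNodes\in\FESpaceOneNodes$ we have $\avg_h\massNodes\in\FESpaceZeroNodes$ with $(\avg_h\massNodes)(t_i)=\tfrac12(\massNodes(t_i)+\massNodes(t_{i+1}))$, so $(\avg_h\massNodes,\massEdges)$ lies in the domain of $\indicatorFct_{\SetKPre}$, and integrating over the intervals $\interval{i}$ (on which $\avg_h\massNodes$, $\momentum$ and $\massEdges$ are constant) gives
\begin{align*}
\Action(\avg_h\massNodes,\momentum)&=\frac h2\sum_{i=0}^{N-1}\sum_{x,y\in\SetNodes}\auxilaryFctAction\big((\avg_h\massNodes)(t_i,x),(\avg_h\massNodes)(t_i,y),\momentum(t_i,x,y)\big)\,Q(x,y)\,\pi(x),\\
\ActionU(\massEdges,\momentum)&=\frac h2\sum_{i=0}^{N-1}\sum_{x,y\in\SetNodes}\transportCostFunction\big(\massEdges(t_i,x,y),\momentum(t_i,x,y)\big)\,Q(x,y)\,\pi(x),
\end{align*}
while $\indicatorFct_{\SetKPre}(\avg_h\massNodes,\massEdges)$ equals $0$ if $0\le\massEdges(t_i,x,y)\le\theta\big((\avg_h\massNodes)(t_i,x),(\avg_h\massNodes)(t_i,y)\big)$ for all $i,x,y$, and $+\infty$ otherwise.

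The core is the scalar identity
\[
\auxilaryFctAction(s,t,m)=\inf\big\{\transportCostFunction(\massEdges,m)\,:\,\massEdges\in\R,\ 0\le\massEdges\le\theta(s,t)\big\}
\qquad\text{for all }s,t,m\in\R,
\]
where the infimum over the empty set is $+\infty$. Indeed, if $\theta(s,t)>0$ then $\massEdges\mapsto\transportCostFunction(\massEdges,m)=m^2/\massEdges$ is non-increasing on $(0,\theta(s,t)]$ (strictly decreasing when $m\neq0$) and $\transportCostFunction(0,m)\in\{0,+\infty\}$ with $\transportCostFunction(0,m)\ge m^2/\theta(s,t)$ in either case, so the infimum is $m^2/\theta(s,t)=\auxilaryFctAction(s,t,m)$, attained at $\massEdges=\theta(s,t)$. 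If $\theta(s,t)=0$ the only feasible point is $\massEdges=0$, so the infimum is $\transportCostFunction(0,m)$, which is $0$ for $m=0$ and $+\infty$ otherwise, i.e. exactly $\auxilaryFctAction(s,t,m)$. Finally, if $\theta(s,t)<0$ — which by the convention $\theta(s,t)=-\infty$ for $\min\{s,t\}<0$ means $\theta(s,t)=-\infty$ — the feasible set is empty and both sides are $+\infty$. Thus the two agree in all cases, and whenever $\theta(s,t)\ge0$ the minimizer is $\massEdges^*=\theta(s,t)$.

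To conclude, observe that the objective on the right of \eqref{eq:actionReform1} is a sum over $(i,x,y)$ of terms each depending only on the single scalar $\massEdges(t_i,x,y)$: the weight $\tfrac h2 Q(x,y)\pi(x)$ is nonnegative, and the membership constraint in $\SetKPre$ is imposed coordinatewise. Hence the infimum over $\massEdges\in\FESpaceZeroEdges$ is obtained by minimizing each coordinate separately; for $Q(x,y)>0$ the scalar identity yields precisely $\auxilaryFctAction\big((\avg_h\massNodes)(t_i,x),(\avg_h\massNodes)(t_i,y),\momentum(t_i,x,y)\big)$, while for $Q(x,y)=0$ the $\ActionU$-term vanishes and only feasibility of $\massEdges(t_i,x,y)$ is required. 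Moreover the coordinatewise minimizer $\massEdges^*(t_i,x,y)=\theta\big((\avg_h\massNodes)(t_i,x),(\avg_h\massNodes)(t_i,y)\big)$ is piecewise constant, hence lies in $\FESpaceZeroEdges$, so the infimum is attained. Summing the coordinate minima multiplied by $\tfrac h2 Q(x,y)\pi(x)$ gives exactly $\Action(\avg_h\massNodes,\momentum)=\Action_h(\massNodes,\momentum)$; and in the degenerate case where some $\theta\big((\avg_h\massNodes)(t_i,x),(\avg_h\massNodes)(t_i,y)\big)=-\infty$, strong connectedness of the graph forces $x$ to have an incident edge in $\SetEdges$, so both $\Action(\avg_h\massNodes,\momentum)$ and the right-hand side are $+\infty$. (For completeness, $\SetKPre$ is convex because, for fixed $i,x,y$, the set $\{(a,b,c):c\ge0,\ c\le\theta(a,b)\}$ is the intersection of a half-space with the hypograph of the concave function $\theta$, and $\SetKPre$ is a finite intersection of preimages of such sets under coordinate projections.)

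I expect no genuine obstacle here; the only thing demanding care is the bookkeeping of the degenerate cases — $\theta=0$, $\theta=-\infty$ coming from negative node values, and edges with $Q(x,y)=0$ — so that the identity reads correctly (including as $+\infty=+\infty$) and the optimal $\massEdges$ is certified to stay in the discrete space $\FESpaceZeroEdges$.
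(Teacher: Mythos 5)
Your proposal is correct and follows essentially the same route as the paper: the lower bound comes from the monotonicity of $\transportCostFunction$ in its first argument under the constraint $\massEdges \leq \theta$, and the upper bound from the explicit (feasible, piecewise constant) choice $\massEdges^*(t_i,x,y)=\theta\big((\avg_h\massNodes)(t_i,x),(\avg_h\massNodes)(t_i,y)\big)$. Your extra bookkeeping of the degenerate cases ($\theta=0$, $\theta=-\infty$, $Q(x,y)=0$) is a welcome refinement of the paper's terser argument but does not change the substance.
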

Note that $\transportCostFunction$ is the integrand of the Benamou--Brenier action functional and that $\auxilaryFctAction(s,t,\momentum) = \transportCostFunction(\theta(s,t),\momentum)$.
\begin{proof}
The first equality is merely the definition of $\Action_h$ and using the fact that $\avg_h \momentum = \momentum$ for $\momentum \in \FESpaceZeroEdges$.
For the second equality note that for any $\massEdges \in \FESpaceZeroEdges$ with $(\massElement,\massEdges) \in \SetKPre$ one has $\massEdges(t_i,x,y) \leq \theta(\massElement(t_i,x),\massElement(t_i,y))$. By monotonicity of $\transportCostFunction$ in its first argument this implies 
$\transportCostFunction(\massEdges(t_i,x,y),\momentum(t_i,x,y)) \geq \alpha(\massElement(t_i,x),\massElement(t_i,y),\momentum(t_i,x,y))$ and hence
\begin{align}\label{eq:Aest}
\Action(\massElement,\momentum) \leq \inf \left\{\ActionU(\massEdges,\momentum) + \indicatorFct_{\SetKPre}(\massElement,\massEdges) : \massEdges \in \R^{\SetNodes \times \SetNodes} \right\}.
\end{align}
Further, we obviously have that
$\bar\massEdges(t_i,x,y):=\theta(\massElement(t_i,x),\massElement(t_i,y))$
satisfies $(\massElement,\massEdges)\in\SetKPre$ and
$\ActionU(\bar\massEdges,\momentum)=\Action(\massElement,\momentum)$. Hence,
we have equality in \eqref{eq:Aest}.
\end{proof}

The proximal mapping of the function $\ActionU$ can be computed separately for each time interval and graph edge. 
However, the set $\SetKPre$ still couples the variables $\avg_h \massNodes$ and $\massEdges$ according to the graph structure and the averaging operator $\avg_h$ couples the variables of $\massNodes$ in time.
To resolve this, we introduce a second set of auxiliary variables.
\begin{Lemma}
For $\massElement \in \R^\SetNodes$, $\massEdges \in \R^{\SetNodes \times \SetNodes}$ one finds
\begin{align} \nonumber
\indicatorFct_{\SetKPre}(\avg_h \massNodes,\massEdges)  = \inf \Big\{ &
\indicatorFct_{\SetJavg}(\massNodes,\massElement)
+ \indicatorFct_{\SetJequal}(\massElement,\massElementSlack)
+ \indicatorFct_{\SetJRho}(\massElementSlack,\massVX,\massVY)
+ \indicatorFct_{\SetK}(\massVX,\massVY,\massEdges)
\, : \,\\
& (\massElement,\massElementSlack,\massVX, \massVY) \in
(\FESpaceZeroNodes)^2 \times (\FESpaceZeroEdges)^2
\Big\} \label{eq:actionReform2}
\end{align}
\begin{align} \intertext{where}
\SetJavg & := \left\{
(\massNodes, \massElement) \in \FESpaceOneNodes \times \FESpaceZeroNodes
\, \colon \,
\massElement = \avg_h \massNodes
     \right	\}, \\
\SetJequal & := \left\{(\massElement,\massElementSlack) \in (\FESpaceZeroNodes)^2
\,\colon\, \massElement=\massElementSlack \right\}, \\
\label{def:SetJ}
\SetJRho & := \left\{ (\massElementSlack, \massVX, \massVY)
\in \FESpaceZeroNodes \times (\FESpaceZeroEdges)^2
\,:\,
\massElementSlack(t_i,x) = \massVX(t_i,x,y), \,
\massElementSlack(t_i,y) = \massVY(t_i,x,y)
\right\},\\
\label{def:SetK}
\SetK & := \left\{ (\massVX, \massVY, \massEdges) \in (\FESpaceZeroEdges)^3
\,:\,
(\massVX(t_i,x,y), \massVY(t_i,x,y), \massEdges(t_i,x,y)) \in \SetKSmall \right\}, \\
\intertext{with}
\SetKSmall & := \{ (\massVX, \massVY, \massEdges) \in \R^3 \,:\,
0 \leq \massEdges \leq \theta(\massVX,\massVY) \}.
\end{align}
\end{Lemma}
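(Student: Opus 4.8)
The statement to prove is the identity \eqref{eq:actionReform2} expressing $\indicatorFct_{\SetKPre}(\avg_h \massNodes,\massEdges)$ as an infimum of a sum of four indicator functions over the auxiliary variables $(\massElement,\massElementSlack,\massVX,\massVY)$. Since all the functions on the right-hand side are indicators of sets, the infimum is either $0$ or $+\infty$: it equals $0$ precisely when there exist auxiliary variables making \emph{all four} constraints simultaneously satisfiable, and $+\infty$ otherwise. So the claim reduces to the set-theoretic assertion that $(\avg_h\massNodes,\massEdges)\in\SetKPre$ if and only if one can choose $(\massElement,\massElementSlack,\massVX,\massVY)$ with $(\massNodes,\massElement)\in\SetJavg$, $(\massElement,\massElementSlack)\in\SetJequal$, $(\massElementSlack,\massVX,\massVY)\in\SetJRho$, and $(\massVX,\massVY,\massEdges)\in\SetK$. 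The plan is to prove the two implications separately.

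\textbf{The ``$\geq$'' direction (feasibility of the right-hand side implies membership in $\SetKPre$).} Suppose auxiliary variables satisfying all four constraints exist. From $\SetJavg$ we get $\massElement=\avg_h\massNodes$; from $\SetJequal$, $\massElementSlack=\massElement$, hence $\massElementSlack=\avg_h\massNodes$; from $\SetJRho$, for every edge $(x,y)$ and every $i$ we have $\massVX(t_i,x,y)=\massElementSlack(t_i,x)=(\avg_h\massNodes)(t_i,x)$ and likewise $\massVY(t_i,x,y)=(\avg_h\massNodes)(t_i,y)$; finally $\SetK$ (via $\SetKSmall$) gives $0\leq\massEdges(t_i,x,y)\leq\theta(\massVX(t_i,x,y),\massVY(t_i,x,y))=\theta((\avg_h\massNodes)(t_i,x),(\avg_h\massNodes)(t_i,y))$. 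This is exactly the defining inequality of $\SetKPre$ evaluated at $(\avg_h\massNodes,\massEdges)$, so the left-hand side is $0$.

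\textbf{The ``$\leq$'' direction (membership in $\SetKPre$ implies feasibility).} Conversely, assume $(\avg_h\massNodes,\massEdges)\in\SetKPre$. We construct the auxiliary variables explicitly: set $\massElement:=\avg_h\massNodes$ and $\massElementSlack:=\massElement$ (so the $\SetJavg$ and $\SetJequal$ constraints hold by construction), and for each $i$ and each ordered pair $(x,y)$ define $\massVX(t_i,x,y):=\massElementSlack(t_i,x)$ and $\massVY(t_i,x,y):=\massElementSlack(t_i,y)$, which makes $(\massElementSlack,\massVX,\massVY)\in\SetJRho$. It then remains to check $(\massVX,\massVY,\massEdges)\in\SetK$, i.e.\ $(\massVX(t_i,x,y),\massVY(t_i,x,y),\massEdges(t_i,x,y))\in\SetKSmall$ for all $i,x,y$; but $0\leq\massEdges(t_i,x,y)\leq\theta((\avg_h\massNodes)(t_i,x),(\avg_h\massNodes)(t_i,y))=\theta(\massVX(t_i,x,y),\massVY(t_i,x,y))$ holds precisely because $(\avg_h\massNodes,\massEdges)\in\SetKPre$. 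Hence all four constraints are met and the infimum is $0=\indicatorFct_{\SetKPre}(\avg_h\massNodes,\massEdges)$.

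\textbf{Main obstacle.} There is essentially no analytic difficulty here; the proof is a bookkeeping exercise that unwinds the chain of equality constraints $\massElement\to\massElementSlack\to(\massVX,\massVY)$ introduced solely to \emph{duplicate} the node value $(\avg_h\massNodes)(t_i,z)$ once per incident edge, thereby localizing the nonlinear constraint $\massEdges\leq\theta(\cdot,\cdot)$ to $\SetKSmall$. The only point that needs a word of care is that all auxiliary objects genuinely lie in the prescribed finite-element spaces: since $\massElement=\avg_h\massNodes\in\FESpaceZeroNodes$ and the edge variables are defined pointwise-per-interval from $\massElement$, they are indeed piecewise constant in time, i.e.\ elements of $\FESpaceZeroEdges$, so the infimum in \eqref{eq:actionReform2} is taken over a nonempty set whenever $(\avg_h\massNodes,\massEdges)\in\SetKPre$. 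Combining the two chains of implications yields the stated identity.
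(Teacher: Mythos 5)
Your proof is correct and follows essentially the same route as the paper: the first three constraints force the unique choice $\massElement=\avg_h\massNodes$, $\massElementSlack=\massElement$, $\massVX(t_i,x,y)=\massElementSlack(t_i,x)$, $\massVY(t_i,x,y)=\massElementSlack(t_i,y)$, after which membership in $\SetK$ is equivalent to $(\avg_h\massNodes,\massEdges)\in\SetKPre$. The only difference is presentational — you split the equivalence into two explicit implications, while the paper states the uniqueness of the feasible tuple and the resulting equivalence in one step.
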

\begin{proof}
For fixed $\massNodes \in \FESpaceOneNodes$ there is precisely one tuple $(\massElement,\massElementSlack,\massVX,\massVY)$ such that 
\[(\massNodes,\massElement) \in \SetJavg, \quad (\massElement,\massElementSlack) \in \SetJequal,\; \text{and} \quad (\massElementSlack,\massVX,\massVY) \in \SetJRho\,,\] 
given by $\massElement=\avg_h \massNodes$, $\massElementSlack=\massElement$, $\massVX(t_i,x,y)=\massElementSlack(t_i,x)$, $\massVY(t_i,x,y) = \massElementSlack(t_i,y)$. For this $(\massVX,\massVY)$ one finds $(\massVX, \massVY, \massEdges) \in \SetK$ if and only if $(\avg_h \massNodes,\massEdges) \in \SetKPre$.
\end{proof}

The function $\indicatorFct_{\SetJavg}$ relates the values of $\massNodes$ on time nodes to the average values on the adjacent time intervals, $\indicatorFct_{\SetJRho}$ communicates the values of $\massElementSlack$ on graph nodes to the adjacent graph edges and $\indicatorFct_\SetK$ ensures the mass averaging via the function $\theta$. The additional splitting via $\indicatorFct_{\SetJequal}$ will later simplify partition of the final optimization problem into primal and dual component.
The sets $\SetJavg$, $\SetJequal$, $\SetJRho$ and $\SetK$ are all products of simpler low-dimensional sets, implying simpler computation of the relevant proximal mappings and projections.

This gives us an equivalent formulation for the discrete minimization problem \eqref{def:metricDiscrete}:
\begin{align} \nonumber
\W_h(\massElement_A, \massElement_B)^2
=  \inf
\Big\{ & (\F + \G)(\massNodes_h,\momentum_h,\massEdges_h,\massVX_h,\massVY_h,\massElement_h, \massElementSlack_h)
\,:\,\\
&(\massNodes_h,\momentum_h,\massEdges_h,\massVX_h,\massVY_h,\massElement_h, \massElementSlack_h) \in \FESpaceOneNodes \times (\FESpaceZeroEdges)^4 \times (\FESpaceZeroNodes)^2
\Big\} 
\label{def:metricWithUAndV}
\end{align}
with
\begin{align*}
\F(\massNodes_h,\momentum_h,\massEdges_h,\massVX_h,\massVY_h,\massElement_h, \massElementSlack_h)
& :=
\ActionU(\massEdges_h, \momentum_h)
+ \indicatorFct_{\SetJRho}(\massElementSlack_h,\massVX_h, \massVY_h) 
+ \indicatorFct_{\SetJavg}(\massNodes_h,\massElement_h), \\
\G(\massNodes_h,\momentum_h,\massEdges_h,\massVX_h,\massVY_h,\massElement_h, \massElementSlack_h)
& :=
\indicatorFct_{\CE_h(\massNodes_A, \massNodes_B)}(\massNodes_h, \momentum_h) 
+ \indicatorFct_{\SetK}(\massVX_h,\massVY_h,\massEdges_h)
+ \indicatorFct_{\SetJequal}(\massElement_h,\massElementSlack_h).
\end{align*}

The structure of this optimization problem is well suited for the first order primal-dual algorithm presented in \cite{ChPo11}. 
We consider the Hilbert space $H = \FESpaceOneNodes \times (\FESpaceZeroEdges)^4 \times (\FESpaceZeroNodes)^2$ composed of tuples of functions in space and time
with the scalar product
\begin{align}
& \InProdH{\left( \massNodes_{h,1},\momentum_{h,1},\massEdges_{h,1},\massVX_{h,1},\massVY_{h,1},\massElement_{h,1}, \massElementSlack_{h,1}  \right)}
          {\left( \massNodes_{h,2},\momentum_{h,2},\massEdges_{h,2},\massVX_{h,2},\massVY_{h,2},\massElement_{h,2}, \massElementSlack_{h,2} \right)} \nonumber \\
& \quad := h \sum_{i=0}^N \InProdV{\massNodes_{h,1}(t_i,\cdot)}{\massNodes_{h,2}(t_i,\cdot)}
   + h \sum_{i=0}^{N-1}         
         \InProdV{\massElement_{h,1}(t_i,\cdot)}{\massElement_{h,2}(t_i,\cdot)}
         +\InProdV{\massElementSlack_{h,1}(t_i,\cdot)}{\massElementSlack_{h,2}(t_i,\cdot)} \nonumber \\
& \qquad + h \sum_{i=0}^{N-1} 
          \InProdE{\momentum_{h,1}(t_i,\cdot)}{\momentum_{h,2}(t_i,\cdot)} 
        +\InProdE{\massEdges_{h,1}(t_i,\cdot)}{\massEdges_{h,2}(t_i,\cdot)} \nonumber \\
  & \qquad + h \sum_{i=0}^{N-1}        \InProdE{\massVX_{h,1}(t_i,\cdot)}{\massVX_{h,2}(t_i,\cdot)}
         +\InProdE{\massVY_{h,1}(t_i,\cdot)}{\massVY_{h,2}(t_i,\cdot)} \,.
\label{eq:ScalarProductDiscrete}
\end{align}
and the induced norm denoted by $\NormH{\cdot}$.
Then applying \cite[Algorithm 1]{ChPo11} to solve problem \eqref{def:metricWithUAndV} with $\F, \,\G: \,H \to \RCupInf$
amounts to iteratively compute for initial data $(\iterz{a},\iterz{b}) \in H^2$ and $\iterz{\widebar{a}}=\iterz{a}$  
\begin{align}
\iterll{b} & = \prox_{\sigma\,\F^\ast}(\iterl{b}+\sigma\,\iterl{\widebar{a}}),\nonumber  \\
\iterll{a} & = \prox_{\tau\,\G}(\iterl{a}-\tau\,\iterll{b}), \label{eq:ChambollePock} \\
\iterll{\widebar{a}} & = \iterll{a} + \lambda \cdot (\iterll{a}-\iterl{a}).\nonumber
\end{align}
where $\tau$, $\sigma>0$, $\lambda \in [0,1]$ . 
As demonstrated in \cite{ChPo11}  the iterates converge to a minimizer in \eqref{def:metricWithUAndV} if $\tau \cdot \sigma < 1$.
For some $(\massNodes_h,\momentum_h,\massEdges_h,\massVX_h,\massVY_h,\massElement_h, \massElementSlack_h) \in H$ one finds
\begin{align*}
 \F^\ast(\massNodes_h,\momentum_h,\massEdges_h,\massVX_h,\massVY_h,\massElement_h, \massElementSlack_h) 
  = \ActionU^\ast(\massEdges_h,\momentum_h)
  + \indicatorFct_{\SetJRho}^\ast(\massElementSlack_h,\massVX_h, \massVY_h)
  + \indicatorFct_{\SetJavg}^\ast(\massNodes_h, \massElement_h)
\end{align*}
and the proximal mapping 
$(\massNodes^\pr_h,\momentum^\pr_h,\massEdges^\pr_h,{\massVX_h}^\pr,{\massVY_h}^\pr, \massElement^\pr_h, \massElementSlack^\pr_h) 
= \prox_{\sigma\,\F^\ast}(\massNodes_h, \momentum_h, \massEdges_h, \massVX_h, \massVY_h, \massElement_h, \massElementSlack_h)$ decomposes as follows:
\begin{align*}
 (\massEdges^\pr_h,\momentum^\pr_h)
 & = \prox_{\sigma\,\ActionU^\ast}(\massEdges_h,\momentum_h)\,, \\
 (\massElementSlack^\pr_h,{\massVX_h}^\pr,{\massVX_h}^\pr) 
 & = \prox_{\sigma\,\indicatorFct_{\SetJRho}^\ast}(\massElementSlack_h,\massVX_h,\massVY_h) \, , \\
 (\massNodes^\pr_h, \massElement^\pr_h) 
 & = \prox_{\sigma \, \indicatorFct_{\SetJavg}^\ast}(\massNodes_h,\massElement_h) \, .
\end{align*}
Likewise, for 
$(\massNodes^\pr_h,\momentum^\pr_h,\massEdges^\pr_h,{\massVX_h}^\pr,{\massVY_h}^\pr,\massElement^\pr_h, \massElementSlack^\pr_h) 
= \prox_{\tau\,\G}(\massNodes_h,\allowbreak \momentum_h,\allowbreak \massEdges_h,\allowbreak \massVX_h,\allowbreak \massVY_h, \massElement_h, \massElementSlack_h)$ one finds
\begin{align*}
 (\massNodes^\pr_h,\momentum^\pr_h) & =
 \proj_{\CE_h(\massNodes_A,\massNodes_B)}(\massNodes_h,\momentum_h)\,, \\
 ({\massVX}^\pr_h,{\massVY}^\pr_h,\massEdges_h) 
 & = \proj_{\SetK}(\massVX_h,\massVY_h,\massEdges_h)\, , \\
 (\massElement^\pr_h, \massElementSlack^\pr_h) 
 &= \proj_{\SetJequal}(\massElement_h, \massElementSlack_h) \, .
\end{align*}
Each of the proximal maps is performed with respect to the norm $\NormH{\cdot}$ restricted to the relevant variables.
\bigskip

In what follows, we will study these maps in detail.
In fact, we will observe that 
$\prox_{\ActionU^\ast}$ and $\proj_{\SetK}$ can be separated into low-dimensional problems over each time-step and edge $(x,y) \in \SetNodes \times \SetNodes$,
$\prox_{\indicatorFct_{\SetJRho}^\ast}$ splits into low-dimensional problems for each time-step and node $x \in \SetNodes$,
$\prox_{\indicatorFct_{\SetJequal}}$ is a simple pointwise update,
$\prox_{\indicatorFct_{\SetJavg}^\ast}$ decouples for each node $x \in \SetNodes$ to a sparse linear system in time,
and $\proj_{\CE_h(\massNodes_A,\massNodes_B)}$ can be computed solving a linear system, which is sparse if $Q$ is sparse.
Consequently, $\prox_{\F^\ast}$ and $\prox_\G$ can be computed efficiently and ensure that the above scheme is well-suited to solve \eqref{def:metricWithUAndV}.

\subsection[Projection onto CE]{Projection onto $\CE_h(\massNodes_A,\massNodes_B)$}
For given $(\massNodes_h,\momentum_h) \in \FESpaceOneNodes \times \FESpaceZeroEdges$ we need to solve the following problem:
\begin{align} \label{eq:ProjCE}
 \proj_{\CE_h(\massNodes_A,\massNodes_B)}(\massNodes_h,\momentum_h) 
 = \argmin_{(\massNodes^\pr_h,\momentum^\pr_h) \in  \CE_h(\massNodes_A,\massNodes_B)}
 \frac{h}{2} \sum_{i=0}^N \NormV{\massNodes^\pr_h(t_i,\cdot)-\massNodes_h(t_i,\cdot)}^2 + \frac{h}{2} \sum_{i=0}^{N-1} \NormE{\momentum^\pr_h(t_i,\cdot)-\momentum_h(t_i,\cdot)}^2
\end{align}
To this end we take into account the following dual formulation.
\begin{Proposition}\label{prop:ProjCEDual}
The solution $(\massNodes^\pr_h,\momentum^\pr_h)$ to \eqref{eq:ProjCE} is given by
\begin{subequations}
\label{eq:ProjCEPDRelationGF}
\begin{align}
 \label{eq:ProjCEPDRelationGF:A}
 \massNodes^\pr_h(t_i,x) & = \massNodes_h(t_i,x) + \frac{\LagrangeMultiplierCE_h(t_i,x) - \LagrangeMultiplierCE_h(t_{i-1},x)}{h} \,, \quad \forall i=1,\ldots,N-1 \, , \\
 \label{eq:ProjCEPDRelationGF:B}
 \massNodes^\pr_h(t_0,x) & = \massNodes_A(x) \, , \; \massNodes^\pr_h(t_N,x) = \massNodes_B(x) \, , \\
 \label{eq:ProjCEPDRelationGF:C}
 \momentum^\pr_h(t_i,x,y) &= \momentum_h(t_i,x,y) + \nabla_\SetNodes \LagrangeMultiplierCE_h(t_i,x,y)  \,, \quad \forall i=1,\ldots,N-1 \, .
\end{align}
\end{subequations}
where $\LagrangeMultiplierCE_h$  solves the space time elliptic equation

\begin{align}
& \pi(x) \frac{ \LagrangeMultiplierCE_h(t_1,x) - \LagrangeMultiplierCE_h(t_{0},x) }{h^2} + \pi(x)\,\triangle_\SetNodes \LagrangeMultiplierCE_h(t_0,x)
  = - \pi(x) \left( \frac{\massNodes_h(t_{1},x) - \massNodes_A(x)}{h} + \div \momentum_h(t_0,x) \right) \; , \nonumber \\
& \pi(x) \frac{ -\LagrangeMultiplierCE_h(t_{N-1},x) + \LagrangeMultiplierCE_h(t_{N-2},x)}{h^2} + \pi(x) \triangle_\SetNodes \LagrangeMultiplierCE_h(t_{N-1},x)  \nonumber \\
& \qquad \qquad  \qquad \qquad \qquad  \qquad \qquad 
  = - \pi(x) \left( \frac{\massNodes_B(x) - \massNodes_h(t_{N-1},x)}{h} + \div \momentum_h(t_{N-1},x) \right) \nonumber \\
 &\pi(x) \frac{\LagrangeMultiplierCE_h(t_{i+1},x) - 2 \LagrangeMultiplierCE_h(t_i,x) + \LagrangeMultiplierCE_h(t_{i-1},x) }{h^2} + 
 \pi(x)\,\triangle_\SetNodes \LagrangeMultiplierCE_h(t_i,x)   \nonumber\\
& \qquad \qquad  \qquad \qquad \qquad  \qquad \qquad = - \pi(x) \left( \frac{\massNodes_h(t_{i+1},x) - \massNodes_h(t_i,x)}{h} + \div \momentum_h(t_i,x) \right) \label{eq:ProjCELM}
\end{align}
for $i=1,\ldots,N-2$ and $x \in \SetNodes$.
\end{Proposition}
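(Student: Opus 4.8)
The solution to \eqref{eq:ProjCE} is the minimizer of a strictly convex quadratic over the affine subspace $\CE_h(\massNodes_A,\massNodes_B)\subset\FESpaceOneNodes\times\FESpaceZeroEdges$, so it exists, is unique, and is characterized by its first-order (Karush--Kuhn--Tucker) conditions. The plan is to introduce a Lagrangian, read off the optimality relations \eqref{eq:ProjCEPDRelationGF}, and substitute them into primal feasibility to obtain the space-time elliptic system \eqref{eq:ProjCELM}. The affine constraint splits into two parts: the prescribed endpoint values $\massNodes^\pr_h(t_0,\cdot)=\massNodes_A$ and $\massNodes^\pr_h(t_N,\cdot)=\massNodes_B$, which I keep as hard constraints, and the weak discrete continuity equation tested against $\FESpaceZeroNodes$ (as in \eqref{eq:CEDisc}), for which I introduce a multiplier $\LagrangeMultiplierCE_h\in\FESpaceZeroNodes$. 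This gives the Lagrangian
\begin{align*}
\mathcal{L}(\massNodes^\pr_h,\momentum^\pr_h,\LagrangeMultiplierCE_h)
& = \tfrac{h}{2}\sum_{i=0}^{N}\NormV{\massNodes^\pr_h(t_i,\cdot)-\massNodes_h(t_i,\cdot)}^2
  + \tfrac{h}{2}\sum_{i=0}^{N-1}\NormE{\momentum^\pr_h(t_i,\cdot)-\momentum_h(t_i,\cdot)}^2 \\
& \quad + h\sum_{i=0}^{N-1}\InProdV{\partial_t\massNodes^\pr_h(t_i,\cdot)+\div_\SetNodes\momentum^\pr_h(t_i,\cdot)}{\LagrangeMultiplierCE_h(t_i,\cdot)}\,,
\end{align*}
to be made stationary over $(\massNodes^\pr_h,\momentum^\pr_h)$ subject to the two endpoint conditions.

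For the momentum, the discrete integration-by-parts identity $\InProdV{\phi}{\div_\SetNodes\Psi}=-\InProdE{\nabla_\SetNodes\phi}{\Psi}$ rewrites the momentum part $h\sum_i\InProdV{\div_\SetNodes\momentum^\pr_h(t_i,\cdot)}{\LagrangeMultiplierCE_h(t_i,\cdot)}$ of the coupling term as $-h\sum_i\InProdE{\nabla_\SetNodes\LagrangeMultiplierCE_h(t_i,\cdot)}{\momentum^\pr_h(t_i,\cdot)}$, so the Euler equation in $\momentum^\pr_h(t_i,\cdot)$ (with respect to $\InProdE{\cdot}{\cdot}$) is $\momentum^\pr_h(t_i,\cdot)=\momentum_h(t_i,\cdot)+\nabla_\SetNodes\LagrangeMultiplierCE_h(t_i,\cdot)$, i.e.\ \eqref{eq:ProjCEPDRelationGF:C}. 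For the mass, I insert $(\partial_t\massNodes^\pr_h)(t_i)=\tfrac1h(\massNodes^\pr_h(t_{i+1})-\massNodes^\pr_h(t_i))$ into the coupling term and, for each interior index $1\le i\le N-1$, collect the two contributions in which $\massNodes^\pr_h(t_i,\cdot)$ appears; the Euler equation in $\massNodes^\pr_h(t_i,\cdot)$ (with respect to $\InProdV{\cdot}{\cdot}$) then yields \eqref{eq:ProjCEPDRelationGF:A}. Relation \eqref{eq:ProjCEPDRelationGF:B} is just the prescribed boundary data, and stationarity in $\LagrangeMultiplierCE_h$ reproduces primal feasibility, which is equivalent to the pointwise identity $\partial_t\massNodes^\pr_h(t_i,\cdot)+\div_\SetNodes\momentum^\pr_h(t_i,\cdot)=0$ for $i=0,\ldots,N-1$ since this residual already lies in $\FESpaceZeroNodes$ and the test functions exhaust $\FESpaceZeroNodes$.

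Substituting \eqref{eq:ProjCEPDRelationGF} into this last identity and using $\div_\SetNodes\nabla_\SetNodes=\triangle_\SetNodes$ converts the second time differences of $\LagrangeMultiplierCE_h$ into the centred three-point stencil $\LagrangeMultiplierCE_h(t_{i+1},\cdot)-2\LagrangeMultiplierCE_h(t_i,\cdot)+\LagrangeMultiplierCE_h(t_{i-1},\cdot)$ for $1\le i\le N-2$, which gives the third line of \eqref{eq:ProjCELM}; for $i=0$ and $i=N-1$ the frozen values $\massNodes_A$ and $\massNodes_B$ enter in place of $\massNodes^\pr_h(t_0,\cdot)$ and $\massNodes^\pr_h(t_N,\cdot)$, producing the one-sided first and second lines. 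Conversely, any solution $\LagrangeMultiplierCE_h$ of \eqref{eq:ProjCELM} inserted into \eqref{eq:ProjCEPDRelationGF} satisfies the full optimality system and hence, by convexity, is the minimizer sought in \eqref{eq:ProjCE}. Finally, \eqref{eq:ProjCELM} is solvable: its left-hand operator is a symmetric negative-semidefinite discrete space-time Laplacian (a discrete Neumann Laplacian in time combined with $\triangle_\SetNodes$ in space) whose kernel consists of the space-time constants, and the right-hand side pairs to zero with the constant $\odens$, because $\sum_i\InProdV{\odens}{\div_\SetNodes\momentum_h(t_i,\cdot)}=0$ (as $\nabla_\SetNodes\odens=0$) and the time-difference terms telescope to $\massNodes_B-\massNodes_A$, for which $\InProdV{\odens}{\massNodes_B-\massNodes_A}=1-1=0$ since $\massNodes_A,\massNodes_B\in\Pc(\SetNodes)$. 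The solution is unique up to an additive space-time constant, which is harmless since \eqref{eq:ProjCEPDRelationGF} involves $\LagrangeMultiplierCE_h$ only through differences.

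The work here is essentially bookkeeping rather than a conceptual obstacle: the delicate points are handling the temporal boundary correctly (which index ranges the relations \eqref{eq:ProjCEPDRelationGF:A}--\eqref{eq:ProjCEPDRelationGF:C} hold on, and how the frozen nodes $t_0,t_N$ feed into the one-sided equations of \eqref{eq:ProjCELM}) and fixing the sign of $\LagrangeMultiplierCE_h$ consistently --- here via the $+$ sign of the coupling term in $\mathcal{L}$ --- so that the signs in \eqref{eq:ProjCEPDRelationGF} and \eqref{eq:ProjCELM} come out as stated.
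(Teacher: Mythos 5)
Your proposal is correct and follows essentially the same route as the paper: form the Lagrangian with a multiplier $\LagrangeMultiplierCE_h \in \FESpaceZeroNodes$ for the discrete continuity equation, read off \eqref{eq:ProjCEPDRelationGF:A} and \eqref{eq:ProjCEPDRelationGF:C} from stationarity, and substitute into primal feasibility to obtain \eqref{eq:ProjCELM}. The only (harmless) differences are that you keep the endpoint conditions as hard constraints instead of introducing separate multipliers $\lambda_A,\lambda_B$, and that you fold the solvability discussion (kernel of the operator consists of space-time constants, right-hand side orthogonal to them since $\massNodes_A,\massNodes_B$ have equal mass) into the proof, whereas the paper places it in the text following the proposition.
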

The factors $\pi(x)$ in \eqref{eq:ProjCELM} could be canceled but they will simplify further analysis.
\begin{proof}
We define the Lagrangian corresponding to \eqref{eq:ProjCE} as
\begin{align*}
 L[\massNodes_h^\pr, \momentum_h^\pr,\LagrangeMultiplierCE_h,\lambda_A,\lambda_B]
 =&  \frac{h}{2} \sum_{i=0}^N \NormV{\massNodes^\pr_h(t_i,\cdot)-\massNodes_h(t_i,\cdot)}^2 + \frac{h}{2} \sum_{i=0}^{N-1} \NormE{\momentum^\pr_h(t_i,\cdot)-\momentum_h(t_i,\cdot)}^2 \\
 & +h \sum_{i=0}^{N-1} \sum_{x \in \SetNodes} \LagrangeMultiplierCE_h(t_i,x) \left( \frac{\massNodes_h^\pr(t_{i+1},x) - \massNodes_h^\pr(t_i,x)}{h} + \div_\SetNodes \momentum_h^\pr(t_i,x) \right) \pi(x) \\
 &+  \sum_{x \in \SetNodes} \left( \lambda_B(x) ( \massNodes_h(t_N,x) - \massNodes_B(x) ) + \lambda_A(x) ( \massNodes_h(t_0,x) - \massNodes_A(x) ) \right) \pi(x)
\end{align*}
where $\lambda_A, \lambda_B$ are the Lagrange multipliers for the boundary conditions $\massNodes_h(t_0,\cdot) = \massNodes_A$, $\massNodes_h(t_N,\cdot) = \massNodes_B$.
The optimality condition in $\massNodes_h$ and $\momentum_h$ directly imply \eqref{eq:ProjCEPDRelationGF:A} and \eqref{eq:ProjCEPDRelationGF:C}. \eqref{eq:ProjCEPDRelationGF:B} reflects the boundary conditions, which are to be ensured in $\CE_h(\massNodes_A,\massNodes_B)$.
Inserting these relations into the continuity equation $\partial_t \massNodes_h^\pr + \div \momentum_h^\pr=0$ leads to the system of equations \eqref{eq:ProjCELM}.
\end{proof}

The Lagrange multiplier $\LagrangeMultiplierCE_h$ in Proposition \ref{prop:ProjCEDual} lives in $\FESpaceZeroNodes$ which can be identified with $\R^{N  \card \SetNodes}$. We equip this space with the canonical basis
\begin{align*}
(\LagrangeMultiplierCE_h^{i,x})_{i=0,\ldots, N-1,\,x\in \SetNodes}
\quad \tn{where} \quad
(\LagrangeMultiplierCE_h^{i,x})(t_j,y) = \delta_{i,j} \cdot \delta_{x,y}
\end{align*}
and the standard Euclidean inner product with respect to this basis.
Then the elliptic equation \eqref{eq:ProjCELM} can be written as a linear system $S Z = F$ 
for a coordinate vector $Z = (\LagrangeMultiplierCE_h(t_i,x))_{i=0,\ldots N-1,\, x\in X}$,
a matrix $S \in \R^{(N\card \SetNodes) \times (N \card \SetNodes)}$
and a vector $F \in \R^{N  \card \SetNodes}$.
The matrix $S$ is symmetric since $\pi(x)Q(x,y) = \pi(y) Q(y,x)$ and the matrix representation of $\triangle_\SetNodes$ is $Q - \text{diag}(\sum_yQ(\cdot,y))$.
Furthermore, $S$ is sparse if $Q$ is sparse.
However, the matrix $S$ is not invertible, its kernel is spanned by functions that are constant in space and time. 
To see this, assume that a non constant $Z$  
is in the kernel of $S$ and denote by $\phi_h$ the associated function in $\FESpaceZeroNodes$. Now, 
let  $I_+(\mu) := \{(i,x) \in \{0,\ldots, N-1\} \times \SetNodes\,:\,\phi_h(i,x)>\mu\}$ for $\mu = \min \phi_h(i,x)$
and define $\psi_h\in \FESpaceZeroNodes$ via $\psi(t_i, x) = 1$ if $(i,x)\in I_+(\mu)$ and $\psi_h(t_i, x) = 0$ else. 
Let $W$ be the associated nodal vector to $\psi$. 
By assumption on $Z$ the set $I_+(\mu)$ is non empty and thus it is easy to see that $W^\top S Z <0$ and thus $Z$ 
can not be in the kernel of $S$, which proves the claim.

We impose the additional constraint $\sum_{i=0}^{N-1} \sum_{x \in \SetNodes} \LagrangeMultiplierCE_h(t_i,x) = 0$ to remove this ambiguity. 
This can be written as $w^\top \LagrangeMultiplierCE_h=0$ where $w \in \R^{N  \card \SetNodes}$ is the vector with entries $w^{i,x} = 1$ leading to the linear system 
\begin{align*}
 \begin{pmatrix}
  S & w \\
  w^T & 0 
 \end{pmatrix}
 \begin{pmatrix}
  Z\\
  \lambda
 \end{pmatrix}
 =
 \begin{pmatrix}
  F \\
  0
 \end{pmatrix}
 \, .
\end{align*}
This system is uniquely solvable and the solution implies $\lambda=0$ if $F \perp w$ (in the Euclidean sense), which is true because $\massNodes_A$ and $\massNodes_B$ are assumed to be of equal mass.

\subsection[{Proximal Mapping of A*}]{Proximal Mapping of $\ActionU^\ast$}
\newcommand{\SetBB}{\Bc}
The function $\ActionU$ is convex and 1-homogeneous, hence its Fenchel conjugate is the indicator function of a convex set and the proximal mapping of $\ActionU^\ast$ is a projection.
For $(\massEdges,\momentum) \in (\FESpaceZeroEdges)^2$ one has
\begin{align*}
\ActionU(\massEdges,\momentum) & = h \sum_{i=0}^{N-1} \sum_{x,y \in \SetNodes}
\transportCostFunction(\massEdges(t_i,x,y),\momentum(t_i,x,y)) \, Q(x,y)\,\pi(x)\,.
\end{align*}
Following \cite{BeBr00} a direct calculation for $(p,q) \in (\FESpaceZeroEdges)^2$ yields
\begin{align*}
\ActionU^\ast(p,q) & = \sup_{(\massEdges,\momentum) \in (\FESpaceZeroEdges)^2}
h \sum_{i=0}^{N-1}
\Big[
\InProdE{p(t_i,\cdot,\cdot)}{\massEdges(t_i,\cdot,\cdot)}
+ \InProdE{q(t_i,\cdot,\cdot)}{\momentum(t_i,\cdot,\cdot)} \\[-1ex]
& \qquad \qquad \qquad \qquad - \frac12 \sum_{(x,y) \in \SetNodes \times \SetNodes} \transportCostFunction(\massEdges(t_i,x,y),\momentum(t_i,x,y))\,Q(x,y)\,\pi(x) \Big] \\
& = \frac{h}{2} \sum_{ \genfrac{}{}{0pt}{}{i=0,\ldots, N-1 }{(x,y) \in \SetNodes \times \SetNodes} }
\transportCostFunction^\ast(p(t_i,x,y),q(t_i,x,y))\,Q(x,y)\,\pi(x)
=\sum_{ \genfrac{}{}{0pt}{}{i=0,\ldots, N-1 }{(x,y) \in \SetNodes \times \SetNodes} }
\indicatorFct_\SetBB(p(t_i,x,y),q(t_i,x,y))
\end{align*}
with $\transportCostFunction^\ast=\indicatorFct_\SetBB$ for $\SetBB = \{ (p,q) \in \R^2\,:\, p+\frac{q^2}{4} \leq 0 \}\,$.
Thus the proximal mapping separates into two-dimensional problems for each time interval and graph edge and $(p^{\pr},q^{\pr}) = \prox_{\sigma\,\ActionU^\ast}(p,q)$ precisely if
\begin{align*}
(p^{\pr}(t_i,x,y),q^{\pr}(t_i,x,y)) = \proj_{\SetBB}(p(t_i,x,y),q(t_i,x,y))\,,
\end{align*}
where $\proj_{\SetBB}$ is the projection with respect to the standard Euclidean distance on $\R^2$ and  a Newton scheme in $\R$ can be used 
to solve for this projection.
Since this proximal mapping is a projection, it is in particular independent of the step size $\sigma$.

\subsection[Projection onto K]{Projection onto $\SetK$}
For given $(\massVX,\massVY,\massEdges) \in (\FESpaceZeroEdges)^3$ we need to solve
\begin{align*}
\proj_{\SetK}(\massVX,\massVY,\massEdges) =\!\! \argmin_{
({\massVX}^\pr,{\massVY}^\pr,\massEdges^\pr) \in \SetK}
\frac{h}{2} \sum_{i=0}^{N-1} \Big( &
\NormE{{\massVX}^\pr(t_i,\cdot,\cdot)-\massVX(t_i,\cdot,\cdot)}^2
+ \NormE{{\massVY}^\pr(t_i,\cdot,\cdot)-\massVY(t_i,\cdot,\cdot)}^2 \\[-1ex]
& + \NormE{\massEdges^\pr(t_i,\cdot,\cdot)-\massEdges(t_i,\cdot,\cdot)}^2
\Big)\,.
\end{align*}
Recall that $\SetK$ is a product of the tree-dimensional closed convex set $\SetKSmall$, as indicated in \eqref{def:SetK}. Therefore 
$({\massVX}^\pr,{\massVY}^\pr,\massEdges^\pr) = \proj_{\SetK}(\massVX,\massVY,\massEdges)$
decouples into the edgewise projection in each time step, i.e.
\begin{align*}
({\massVX}^\pr(t_i,x,y),{\massVY}^\pr(t_i,x,y),\massEdges^\pr(t_i,x,y)) = \proj_{\SetKSmall}(\massVX(t_i,x,y),\massVY(t_i,x,y),\massEdges(t_i,x,y))
\end{align*}
where this projection is with respect to the standard Euclidean distance on $\R^3$.
Let us denote by $\supdiff \theta(x)$ the super-differential of $\theta$ at $x \in \R^2$, which is the analogue of the sub-differential for concave functions.
More precisely, $\supdiff \theta(x) = -\partial (-\theta)(x)$, where $\partial (-\theta)(x)$ is the sub-differential of the convex function $x \mapsto -\theta(x)$ at $x$.
Then the projection $\pProj = \proj_{\SetKSmall}(p)$ of  $p \in \R^3$ is characterized by \cite[Prop.~ 6.46]{ConvexFunctionalAnalysis-11}
\begin{align}
\label{def:nCone}
p - \pProj & \in \nCone_\SetKSmall(\pProj) :=
\{ z \in \R^3 \,:\, \langle z, q-\pProj \rangle \leq 0 \, \forall \, q \in \SetKSmall \}\,,
\end{align}
where $\nCone_\SetKSmall(\pProj)$ is the normal cone of $\SetKSmall$ at $\pProj$.
To solve this inclusion we distinguish the following cases:
\begin{Lemma}
\label{lem:ProjKNCones}
For an averaging function $\theta:\R^2 \to \R$ fulfilling the assumptions listed in Section \ref{sec:intro} 
and for $\SetKSmall := \{ p \in \R^3\, : \, 0 \leq p_3 \leq \theta(p_1,p_2) \}$
the normal cone $\nCone_\SetKSmall(\pProj)$ for $\pProj \in \SetKSmall$ is given by:
\begin{enumerate}[label=(\roman*)]
\item 
\label{item:LemmaNConeTrivial}
Trivial projection: $p = \pProj \in \inter \SetKSmall \!= \! \{ (p_1,p_2,p_3) \in \R^3 : 0 \! <\!  p_3 \! < \! \theta(p_1,p_2)\}$, then $\nCone_\SetKSmall(\pProj) = \{0\}$.
\item 
\label{item:LemmaNConeBottom}
Projection onto `bottom facet' of $\SetKSmall$: $\pProj \in (0,+\infty) \times (0,+\infty) \times \{0\}$, then
$\nCone_\SetKSmall(\pProj) = \{0\} \times \{0\} \times \R^-_0$.
\item 
\label{item:LemmaNConeAxis}
Projection onto coordinate axis: $\pProj = (\pProj_1,0,0)$ for $\pProj_1 \in (0,+\infty)$, then 
\begin{align*}
\nCone_\SetKSmall(\pProj)= \{0\} \times \R^-_0 \times \R^-_0
\cup \left\{ (0,q_2,q_3) \in \{0\} \times \R^-_0 \times (0,+\infty) \,:\, 
(0,-q_2/q_3) \in \supdiff \theta(\pProj_1,0) \right\}.
\end{align*}
Note that $(0,q) \in \supdiff \theta(\pProj_1,0)$ is equivalent to $q \geq \lim_{z \searrow 0} \partial_2 \theta(\pProj_1,z)$ and that $\supdiff \theta(\pProj_1,0)$ is empty if $\lim_{z \searrow 0} \partial_2 \theta(\pProj_1,z)=\infty$.
The analogous representation holds for the second axis.
\item \label{item:LemmaNConeOrigin}
Projection onto origin: $\pProj = (0,0,0)$, then
\[
\nCone_\SetKSmall(\pProj)= (\R^-_0)^3 \cup \{ (q_1,q_2,q_3) \in  \R^-_0 \times \R^-_0 \times (0,+\infty) \,:\,
(q_1/q_3,q_2/q_3) \in -\supdiff \theta(0) \}\,.
\]
\item 
\label{item:LemmaNConeTop}
Projection onto `upper surface' of $\SetKSmall$: $\pProj = (\pProj_1,\pProj_2,\theta(\pProj_1,\pProj_2))$ for $(\pProj_1,\pProj_2) \in (0,+\infty)^2$, then
\[
\nCone_\SetKSmall(\pProj)=\{ \lambda \cdot (-\partial_1 \theta(\pProj_1,\pProj_2),-\partial_2 \theta(\pProj_1,\pProj_2),1) \,:\, \lambda \in \R^+\}\,.
\]
\end{enumerate}
\end{Lemma}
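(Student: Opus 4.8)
The plan is to exploit the product-of-constraints structure of $\SetKSmall$ by writing it as an intersection $\SetKSmall = A \cap B$ of the half-space $A := \{p \in \R^3 : p_3 \geq 0\}$ with $B := \{p \in \R^3 : p_3 \leq \theta(p_1,p_2)\}$. Because of the convention $\theta = -\infty$ off the first quadrant, $B$ is exactly the hypograph of the concave upper semicontinuous function $\theta$, so $B$ is convex and contained in $\{p_1 \geq 0,\ p_2 \geq 0\}\times\R$. Since $(1,1,\tfrac12) \in \inter A \cap \inter B$, the standard sum rule for normal cones of intersections of convex sets (cf.~\cite{ConvexFunctionalAnalysis-11}) gives $\nCone_\SetKSmall(\pProj) = N_A(\pProj) + N_B(\pProj)$ for every $\pProj \in \SetKSmall$. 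The normal cone of the half-space is immediate, $N_A(\pProj) = \{0\}$ if $\pProj_3 > 0$ and $N_A(\pProj) = \{0\}\times\{0\}\times\R^-_0$ if $\pProj_3 = 0$, so the task reduces to computing $N_B(\pProj)$ in each geometric configuration and adding.

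First I would dispose of the interior of $B$: if $\pProj_3 < \theta(\pProj_1,\pProj_2)$ then $\pProj_3 \geq 0$ forces $\theta(\pProj_1,\pProj_2) > 0$, hence $\pProj_1,\pProj_2 > 0$, hence $\pProj \in \inter B$ and $N_B(\pProj) = \{0\}$; adding $N_A$ settles cases \ref{item:LemmaNConeTrivial} and \ref{item:LemmaNConeBottom} simultaneously. If instead $\pProj_3 = \theta(\pProj_1,\pProj_2)$, the point is on $\partial B$, and reading off the hypograph normal cone straight from its definition — let $p_3 \to -\infty$ among competitors, which forces the third component of any normal to be nonnegative, then split according to whether it is positive (giving, after normalisation, the supergradient condition on the first two components) or zero (giving a normal to $\mathrm{dom}\,\theta = \R^+_0\times\R^+_0$) — yields
\[
N_B(\pProj) = \{\lambda\,(-\zeta_1,-\zeta_2,1) : \lambda > 0,\ (\zeta_1,\zeta_2) \in \supdiff\theta(\pProj_1,\pProj_2)\}\ \cup\ \big(N_{\mathrm{dom}\,\theta}(\pProj_1,\pProj_2)\times\{0\}\big).
\]
When $\pProj_1,\pProj_2 > 0$ the function $\theta$ is smooth, $\supdiff\theta = \{\nabla\theta\}$ and $N_{\mathrm{dom}\,\theta} = \{0\}$, so $N_B(\pProj)$ is the single ray of case \ref{item:LemmaNConeTop}; as there $\pProj_3 = \theta > 0$ makes $N_A = \{0\}$, this already is $\nCone_\SetKSmall(\pProj)$.

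The heart of the proof is cases \ref{item:LemmaNConeAxis} and \ref{item:LemmaNConeOrigin}, where $\pProj$ sits on the graph of $\theta$ and simultaneously on $\partial(\mathrm{dom}\,\theta)$. For $\pProj = (\pProj_1,0,0)$ with $\pProj_1 > 0$ I would first show that any supergradient there has vanishing first component: testing the supergradient inequality against $(q_1,0)$ and using $\theta(q_1,0) = 0$ together with the monotonicity of $s\mapsto\theta(s,t)$ pins $\zeta_1 = 0$; and by $1$-homogeneity $\partial_2\theta(s,z) = \partial_2\theta(1,z/s)$, so the limiting slope $L := \lim_{z\searrow 0}\partial_2\theta(\pProj_1,z)$ does not depend on $\pProj_1$ and $\supdiff\theta(\pProj_1,0) = \{0\}\times[L,\infty)$, empty exactly when $L = +\infty$. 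Inserting $N_{\mathrm{dom}\,\theta}(\pProj_1,0) = \{0\}\times\R^-_0$ and this superdifferential into the hypograph formula, adding $N_A(\pProj) = \{0\}\times\{0\}\times\R^-_0$, and performing a short case split on the sign of the third coordinate reproduces the union in \ref{item:LemmaNConeAxis}; the second axis is identical, and the origin $\pProj = (0,0,0)$ runs the same way with $N_{\mathrm{dom}\,\theta}(0,0) = (\R^-_0)^2$ and $\supdiff\theta(0,0)$ the set of supergradients at $0$ of the $1$-homogeneous $\theta$ (nonempty since $\theta(s,t)\leq\tfrac{s+t}{2}$ by symmetry and concavity, as in the proof of Lemma~\ref{Lemma:L2}).

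The main obstacle is exactly this last part: $B$ encodes the domain restriction $p_1,p_2\geq0$ only implicitly through $\theta$, so at the axis and origin points its normal cone genuinely mixes ``upper-surface'' (superdifferential) directions with ``domain-boundary'' directions, and one must check carefully that the Minkowski sum with $N_A$ collapses to exactly the stated closed cones — neither more nor less. It is worth noting that for the means of interest $\theta_{\log}$ and $\theta_{\mathrm{geo}}$ one has $L = +\infty$, so the extra component in \ref{item:LemmaNConeAxis} and \ref{item:LemmaNConeOrigin} is void in practice, although the statement is formulated for a general admissible $\theta$.
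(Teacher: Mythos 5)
Your proof is correct, but it is organised quite differently from the paper's. The paper works directly on $\SetKSmall$ and identifies its supporting planes case by case (e.g.\ for the axis point it first traps $\nCone_\SetKSmall(\pProj)$ inside $\{0\}\times\R^-_0\times\R$ by exhibiting nearby points of $\SetKSmall$, then assembles the cone from the two coordinate supporting planes and the tangent planes coming from $\supdiff f(0)$ for $f(t)=\theta(\pProj_1,t)$). You instead factor $\SetKSmall=A\cap B$ into the half-space $\{p_3\ge 0\}$ and the hypograph of the concave u.s.c.\ extension of $\theta$, invoke the exact sum rule $\nCone_{A\cap B}=\nCone_A+\nCone_B$ (justified by the Slater point $(1,1,\tfrac12)$), and then apply the classical hypograph normal-cone formula $\nCone_B(\pProj)=\{\lambda(-\zeta,1):\lambda>0,\ \zeta\in\supdiff\theta(\pProj_1,\pProj_2)\}\cup\bigl(\nCone_{\mathrm{dom}\,\theta}(\pProj_1,\pProj_2)\times\{0\}\bigr)$. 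What this buys is uniformity: all five cases become instances of one formula, and the only case-specific work is identifying $\supdiff\theta$ and $\nCone_{\mathrm{dom}\,\theta}$ at the base point and collapsing the Minkowski sum with $\nCone_A$; in particular the mixing of ``upper-surface'' and ``domain-boundary'' normals at the axis and origin, which the paper treats by ad hoc supporting-plane arguments, is systematised by the $\nCone_{\mathrm{dom}\,\theta}\times\{0\}$ term. The price is reliance on two external facts (the intersection rule under a constraint qualification, and the hypograph formula including the horizontal part), whereas the paper's argument is self-contained; also, your Minkowski-sum collapse at the origin silently uses that $\supdiff\theta(0)\subset(\R^+_0)^2$ is stable under scaling by factors $\ge 1$ (equivalently $\supdiff\theta(0)+(\R^+_0)^2=\supdiff\theta(0)$, which follows from Lemma~\ref{lem:ProjKSupDiff}) — worth making explicit, but not a gap. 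Your identification of $\supdiff\theta(\pProj_1,0)=\{0\}\times[L,\infty)$ coincides with the paper's auxiliary-function argument.
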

\begin{proof}
For $\pProj \in \inter \SetKSmall$ one finds $\nCone_\SetKSmall(\pProj)=\{0\}$ and thus $\pProj=p$, which implies \ref{item:LemmaNConeTrivial}.

In case \ref{item:LemmaNConeBottom} the set $\R \times \R \times \{0\}$ is obviously the only supporting plane of $\SetKSmall$ that contains $\pProj$. Thus the normal cone is just the ray
in direction $(0,0,-1)$.

Assume $\pProj=(\pProj_1,0,0)$, $\pProj_1>0$. Then there is some $\varepsilon>0$ such that $\{(\pProj_1+\varepsilon,0,0),(\pProj_1-\varepsilon,0,0),(\pProj_1,\varepsilon,0)\} \subset \SetKSmall$.
Therefore $\nCone_\SetKSmall(\pProj) \subset \{0\} \times \R^-_0 \times \R$.
Since $\R \times \{0\} \times \R$ and $\R \times \R \times \{0\}$ are supporting planes of $\SetKSmall$ that contain $\pProj$, one must have $\{0\} \times \R^-_0 \times \R^-_0 \subset \nCone_\SetKSmall(\pProj)$.
Moreover, for $\lim_{z \searrow 0} \partial_2 \theta(\pProj_1,z)<\infty$ 
let $z=(z_1,z_2) \in \supdiff \theta(\pProj_1,0)$. One must have $z_1=0$ and $z_2 \in \supdiff f(0)$ with auxiliary function $f : t \mapsto \theta(\pProj_1,t)$.
Then $\{ q \in \R^3 \,:\, \langle q-\pProj,(0,-z_2,1)\rangle = 0\}$ is a supporting plane of $\SetKSmall$ and consequently $(0,-z_2,1) \in \nCone_\SetKSmall(\pProj)$. Conversely, from $z_2 \notin \supdiff f(0)$ follows $(0,-z_2,1) \notin \nCone_\SetKSmall(\pProj)$.
So \[\nCone_\SetKSmall(\pProj) = \{0\} \times \R^-_0 \times \R^-_0 \cup \{(0,-\lambda \cdot z, \lambda)\,:\, z \in \supdiff f(0), \lambda \in (0,+\infty) \}.\]
The auxiliary function $f$ is concave and by monotonicity of the super-differential we find $\supdiff f(0) = [\lim_{z \searrow 0} \partial_2 \theta(\pProj_1,z), +\infty)$. 
With this characterization we arrive at the expression for $\nCone_\SetKSmall(\pProj)$ as given in \ref{item:LemmaNConeAxis}. 
The proof for the second axis is analogous.

For $\pProj = (0,0,0)$ we find $(\R^-_0)^3 \subset \nCone_\SetKSmall(0) \subset \R^-_0 \times \R^-_0 \times \R$ 
with arguments analogous to those in case \ref{item:LemmaNConeAxis}. 
For every $z =(z_1,z_2) \in \supdiff \theta(0)$ a supporting plane through $0$ is given by $\{q \in \R^3 : \langle q, (-z_1,-z_2,1) \rangle = 0 \}$ and hence $(-z_1,-z_2,1) \in \nCone_\SetKSmall(0)$. Conversely, $z=(z_1,z_2) \notin \supdiff \theta(0)$ implies $(-z_1,-z_2,1) \notin \nCone_\SetKSmall(0)$. With this, one obtains the expression for $\nCone_\SetKSmall(0)$ given in \ref{item:LemmaNConeOrigin}.

Finally, we consider $\pProj = (\pProj_1,\pProj_2,\theta(\pProj_1,\pProj_2))$ with $(\pProj_1,\pProj_2) \in (0,+\infty)^2$. 
In a neighbourhood of $\pProj$, $\SetKSmall$ is the subgraph of a concave, differentiable function. 
The unique supporting plane of $\SetKSmall$ through $\pProj$ is given by 
$\{ q \in \R^3 : \langle q-\pProj, (-\partial_1 \theta(\pProj_1,\pProj_2),-\partial_2 \theta(\pProj_1,\pProj_2),1) \rangle = 0\}$
and $(-\partial_1 \theta(\pProj_1,\pProj_2),-\partial_2 \theta(\pProj_1,\pProj_2),1)$ is the unique associated outer normal as stated in \ref{item:LemmaNConeTop}.
\end{proof}

Using Lemma \ref{lem:ProjKNCones} one can devise an algorithm for the projection onto $\SetKSmall$.
For $p=(p_1,p_2,p_3) \in \R^3$ the projection $\pProj=\proj_{\SetKSmall}(p)$ can be determined as follows:
\bigskip

\hspace{10ex} \begin{minipage}{0.8\linewidth}
\begin{algorithmic}
\Function{Project$\SetKSmall$}{$p_1$,$p_2$,$p_3$}
\LineIf{$0 \leq p_3 \leq \theta(p_1,p_2)$} \Return $(p_1,p_2,p_3)$
\LineIf{$p_3 \leq 0$} \Return $(\max\{p_1,0\},\max\{p_2,0\},0)$
\If{$(p_1 > 0) \wedge (p_2 \leq 0)$} 
\LineIf{$-p_2/p_3 \geq \lim_{z \searrow 0} \partial_2 \theta(p_1,z)$}
\Return $(p_1,0,0)$
\EndIf
\If{$(p_1 \leq 0) \wedge (p_2 > 0)$}
\LineIf{$-p_1/p_3 \geq \lim_{z \searrow 0} \partial_1 \theta(z,p_2)$}
\Return $(0,p_2,0)$
\EndIf 
\If{$(p_1 \leq 0) \wedge (p_2 \leq 0)$}
\LineIf{$(-p_1/p_3,-p_2/p_3) \in \supdiff \theta(0)$}
\Return $(0,0,0)$
\EndIf
\State \Return \Call{Project$\SetKSmall$Top}{$p_1$,$p_2$,$p_3$}
\EndFunction
\end{algorithmic}
\end{minipage}
\bigskip

The function \Call{Project$\SetKSmall$Top}{$p_1$,$p_2$,$p_3$}  in the above algorithm corresponds to case \ref{item:LemmaNConeTop} of Lemma \ref{lem:ProjKNCones}, where $\pProj$ lies on the `upper surface' of $\SetKSmall$, defined by the graph surface of  $\theta$. It will be described in more detail below.
In the following we will occasionally use the curve $c: (0,\infty) \to \R^2; \; q \mapsto (q^{-1/2},q^{1/2})$ to parametrize orientations in $(0,\infty)^2$. Due to the 1-homogeneity of $\theta$, often it suffices to look at its values at $\theta(c(q))$. Alternative choices for $c$ are feasible as well.

\begin{Lemma}[Projection onto `upper surface' of $\SetKSmall$]\label{lem:SetKSmall}
Let $p \in \R^3$ with projection on $\SetKSmall$ given by $\pProj = (\pProj_1,\pProj_2,\theta(\pProj_1,\pProj_2))$ with $(\pProj_1,\pProj_2) \in (0,+\infty)^2$.
Further, let $w(q) = (q^{1/2},q^{-1/2},\theta(q^{1/2},q^{-1/2}))$ be a parametrized curve on the `upper surface' 
and $n(q) = (-\partial_1 \theta(q^{1/2},q^{-1/2}), -\partial_2 \theta(q^{1/2},q^{-1/2}), 1)$ be the corresponding normal.
Then there exists a unique $(q,\tau) \in (0,\infty)^2$ s.t. $\pProj = \tau w(q)$. We have that $q$ is the unique root of $q \mapsto \langle p  , w(q)  \times  n(q)\rangle$
and $\tau = \langle p , \frac{w(q)}{\|w(q)\|^2}\rangle$.
\end{Lemma}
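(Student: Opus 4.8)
The plan is to exploit the conical structure of the ``upper surface'' $\Sigma^{+}:=\{(s,t,\theta(s,t)):s,t>0\}$ together with the normal-cone description of the projection from Lemma~\ref{lem:ProjKNCones}\ref{item:LemmaNConeTop}. First I would observe that $(q,\tau)\mapsto\tau w(q)$ is a bijection from $(0,\infty)^{2}$ onto $\Sigma^{+}$: for $(q,\tau)\in(0,\infty)^{2}$ the $1$-homogeneity of $\theta$ gives $\tau w(q)=(\tau q^{1/2},\tau q^{-1/2},\theta(\tau q^{1/2},\tau q^{-1/2}))\in\Sigma^{+}$, while conversely the equations $\tau q^{1/2}=\pProj_{1}$, $\tau q^{-1/2}=\pProj_{2}$ force $q=\pProj_{1}/\pProj_{2}$ and $\tau=\sqrt{\pProj_{1}\pProj_{2}}$, and then the third coordinate matches automatically, again by $1$-homogeneity. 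Since $\pProj\in\Sigma^{+}$ by hypothesis and $\pProj_{1},\pProj_{2}>0$, this already yields the asserted unique $(q,\tau)\in(0,\infty)^{2}$ with $\pProj=\tau w(q)$, with $\tau>0$.

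Next I would use the projection characterization. As $(\pProj_{1},\pProj_{2})\in(0,\infty)^{2}$, Lemma~\ref{lem:ProjKNCones}\ref{item:LemmaNConeTop} and \eqref{def:nCone} give $p-\pProj=\lambda\,(-\partial_{1}\theta(\pProj_{1},\pProj_{2}),-\partial_{2}\theta(\pProj_{1},\pProj_{2}),1)$ for some $\lambda\ge0$. Differentiating $\theta(\mu s,\mu t)=\mu\,\theta(s,t)$ shows $\partial_{i}\theta$ is $0$-homogeneous, so $\partial_{i}\theta(\pProj_{1},\pProj_{2})=\partial_{i}\theta(q^{1/2},q^{-1/2})$ and hence
\begin{align*}
p=\tau\,w(q)+\lambda\,n(q),\qquad \lambda\ge 0.
\end{align*}
Euler's relation for the $1$-homogeneous function $\theta$ reads $q^{1/2}\partial_{1}\theta(q^{1/2},q^{-1/2})+q^{-1/2}\partial_{2}\theta(q^{1/2},q^{-1/2})=\theta(q^{1/2},q^{-1/2})$, i.e.\ $\langle w(q),n(q)\rangle=0$. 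Taking the inner product of the displayed identity with $w(q)$ and with $n(q)$ and using $w(q)\perp n(q)$ gives $\tau=\langle p,w(q)\rangle/\|w(q)\|^{2}$ and $\lambda=\langle p,n(q)\rangle/\|n(q)\|^{2}\ge0$; and since $w(q)\times n(q)$ is orthogonal to both $w(q)$ and $n(q)$, the same identity gives $\langle p,w(q)\times n(q)\rangle=0$, so $q$ is a root of $g(q):=\langle p,w(q)\times n(q)\rangle$.

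For uniqueness I would argue in reverse. If $q'\in(0,\infty)$ satisfies $g(q')=0$, then because $\langle w(q'),n(q')\rangle=0$ and both vectors are nonzero, $\{w(q'),n(q'),w(q')\times n(q')\}$ is an orthogonal basis of $\R^{3}$, and $g(q')=0$ means the $w(q')\times n(q')$-component of $p$ vanishes; hence $p=\tau'w(q')+\lambda'n(q')$ with $\tau'=\langle p,w(q')\rangle/\|w(q')\|^{2}$, $\lambda'=\langle p,n(q')\rangle/\|n(q')\|^{2}$. If in addition $\tau'>0$ and $\lambda'\ge0$, then $\tau'w(q')\in\Sigma^{+}$ by the first paragraph and $p-\tau'w(q')=\lambda'n(q')\in\nCone_{\SetKSmall}(\tau'w(q'))$ by Lemma~\ref{lem:ProjKNCones}\ref{item:LemmaNConeTop} (once more using $0$-homogeneity of $\partial_{i}\theta$), so $\tau'w(q')=\proj_{\SetKSmall}(p)=\pProj$, and the bijection forces $(q',\tau')=(q,\tau)$. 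Thus the root of $g$ corresponding to a genuine foot point on $\Sigma^{+}$ — equivalently the one with $\langle p,w(q')\rangle>0$ and $\langle p,n(q')\rangle\ge0$ — is unique and equals $q$; describing \textsc{Project$\SetKSmall$Top} then only amounts to implementing a scalar root-finder on $g$ inside this admissible range and evaluating $\tau$ by the stated formula.

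The main point requiring care is this uniqueness statement: $g$ is a transcendental function and may a priori have spurious roots for which $\tau'\le0$ or $\lambda'<0$, which correspond to no point of $\Sigma^{+}$ and must be discarded; the sign conditions above (a nonempty regime, as it contains $q$) remove them, and genuine uniqueness then reduces to the uniqueness of $\proj_{\SetKSmall}(p)$, which is automatic for a closed convex set. Apart from this bookkeeping, the only substantive ingredient is the correct transport of the normal cone in Lemma~\ref{lem:ProjKNCones}\ref{item:LemmaNConeTop} along the scaling $\pProj=\tau w(q)$, i.e.\ the $0$-homogeneity of $\nabla\theta$; everything else (the bijection, the formula for $\tau$, and the fact that $q$ is a root) is a direct computation from $1$-homogeneity and Euler's identity.
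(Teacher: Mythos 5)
Your proposal is correct and follows essentially the same route as the paper's proof: both reduce to the unique factorization $\pProj=\tau w(q)$ via $1$-homogeneity, observe that $p$ must lie in the plane spanned by $w(q)$ and $n(q)$ (equivalently $\langle p, w(q)\times n(q)\rangle=0$), and recover $\tau$ from the one-dimensional projection onto the ray. You are in fact somewhat more careful than the paper on the uniqueness of the root — the paper simply infers it from uniqueness of $\pProj$, whereas you correctly note that a priori spurious roots with $\tau'\leq 0$ or $\lambda'<0$ must be excluded and that uniqueness is then guaranteed by the uniqueness of the projection onto the closed convex set $\SetKSmall$.
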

\begin{proof}
Since $\theta$ is 1-homogeneous, any $\pProj$ of the form $(\pProj_1,\pProj_2,\theta(\pProj_1,\pProj_2))$, $(\pProj_1,\pProj_2) \in (0,+\infty)^2$, can be written as $\pProj = \tau \cdot w(q)$ for unique $q \in (0,+\infty)$ and $\tau \in (0,+\infty)$. In explicit, $q= \pProj_1/\pProj_2$ and $\tau=(\pProj_1 \cdot \pProj_2)^\frac12$. 
Now, $n(q)$ is orthogonal on the graph of $\theta$ and outward pointing.
Hence, $p$ lies in the plane spanned by $w(q)$ and $n(q)$.
This is equivalent to $\langle p, w(q) \times n(q) \rangle = 0$.
Since $\pProj$ is unique, this must be the unique root of $q \mapsto \langle p, w(q) \times n(q) \rangle$.
Once $q$ is determined, we know the ray on which $\pProj$ lies. To find $\tau$, one must solve the remaining one-dimensional projection onto the ray. 
Consequently, $\tau$ is the unique minimizer of $\tau \mapsto \frac{1}{2} \|p - \tau \cdot w(q)\|^2$, which concludes the proof.
\end{proof}
For case \ref{item:LemmaNConeOrigin} of Lemma \ref{lem:ProjKNCones}  we need to characterize the super-differential of $\theta$ at the origin.
\begin{Lemma} \label{lem:ProjKSupDiff}
The super-differential of $\theta$ at the origin is given by
\[\supdiff \theta(0) = \ol{\{ \nabla \theta(q^{-1/2},q^{1/2}) \,:\, q \in (0,\infty) \} } +  (\R^+_0)^2\,.\]
\end{Lemma}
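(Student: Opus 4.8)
The plan is to compute $\supdiff\theta(0)$ directly from the definition of the superdifferential of a concave, $1$-homogeneous function, exploiting the fact that for such a function the supergradient inequality at the origin reduces to a global linear upper bound. Recall $z\in\supdiff\theta(0)$ means $\theta(s)\leq\theta(0)+\langle z,s\rangle=\langle z,s\rangle$ for all $s\in\R^2$; since $\theta=-\infty$ off the first quadrant this is only a constraint for $s\in(\R^+_0)^2$, where by $1$-homogeneity and continuity it suffices to test $s$ on the unit circle, or equivalently on the curve $q\mapsto c(q)=(q^{-1/2},q^{1/2})$ together with the two limiting rays along the axes. So $z\in\supdiff\theta(0)$ iff $\langle z,c(q)\rangle\geq\theta(c(q))$ for all $q\in(0,\infty)$ and additionally $z_1\geq 0$, $z_2\geq 0$ (the axis limits, using $\theta(1,0)=\theta(0,1)=0$).

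First I would show the inclusion ``$\supseteq$''. For each fixed $q$, the gradient $\nabla\theta(c(q))$ exists since $\theta$ is $C^\infty$ on $(0,\infty)^2$, and by concavity $\theta(s)\leq\theta(c(q))+\langle\nabla\theta(c(q)),s-c(q)\rangle$ for all $s$ in the quadrant; Euler's identity for the $1$-homogeneous $\theta$ gives $\langle\nabla\theta(c(q)),c(q)\rangle=\theta(c(q))$, so this collapses to $\theta(s)\leq\langle\nabla\theta(c(q)),s\rangle$, i.e. $\nabla\theta(c(q))\in\supdiff\theta(0)$. Adding any vector in $(\R^+_0)^2$ only increases the right-hand side on the quadrant (where $s\geq 0$), so $\nabla\theta(c(q))+(\R^+_0)^2\subseteq\supdiff\theta(0)$; finally $\supdiff\theta(0)$ is closed (it is an intersection of closed half-spaces), which gives the closure of the union as well, establishing ``$\supseteq$''.

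For the reverse inclusion ``$\subseteq$'', take $z\in\supdiff\theta(0)$. From testing against the axis directions we get $z\in(\R^+_0)^2$. The key geometric point is that the epigraph-type set $\{(s,t)\in(\R^+_0)^2\times\R: t\leq\theta(s)\}$ has the ray through $(c(q),\theta(c(q)))$ on its boundary with outward normal proportional to $(-\nabla\theta(c(q)),1)$, and as $q$ ranges over $(0,\infty)$ these normals, together with the two axis-limit normals, sweep out all the supporting hyperplanes of the conical region $\{s\geq 0,\ t\leq\theta(s)\}$ at the origin. Concretely, I would argue that if $z\notin\ol{\{\nabla\theta(c(q)):q\in(0,\infty)\}}+(\R^+_0)^2$, then there is some direction $s_0\in(\R^+_0)^2$ with $\langle z,s_0\rangle<\theta(s_0)$: one separates $z$ from the closed convex(?) set on the right — here one must be slightly careful, since $\{\nabla\theta(c(q)):q\}$ need not itself be convex, but the relevant object is really the intersection of half-spaces $\bigcap_{q}\{w:\langle w,c(q)\rangle\geq\theta(c(q))\}\cap(\R^+_0)^2$, and one shows this intersection equals the claimed set by checking that each extreme supporting functional of the quadrant-cone-under-$\theta$ is attained either at some $c(q)$ or in the limit along an axis. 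This monotone-normal sweep is the crux: I expect the main obstacle to be the careful bookkeeping at the two axes, where $\partial_i\theta$ may blow up (as in the logarithmic mean), so that the supporting normal ``at the axis'' degenerates and must be recovered as a limit $q\to 0$ or $q\to\infty$; the $(\R^+_0)^2$ summand is precisely what absorbs these limiting (possibly infinite-slope) normals, and verifying that it captures exactly the missing supergradients — no more, no less — is where the proof needs the most attention.
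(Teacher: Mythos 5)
Your proposal is correct and follows essentially the same route as the paper: the inclusion $\supseteq$ via concavity together with Euler's identity for the $1$-homogeneous $\theta$ (the paper phrases this equivalently as the $0$-homogeneity of $\nabla\theta$, so that all tangent planes along a ray coincide and pass through the origin), absorption of the nonnegative orthant, and closedness of the superdifferential; and the inclusion $\subseteq$ by showing that any $z$ outside the candidate set violates the supergradient inequality $\langle z,s\rangle\geq\theta(s)$ in some direction $s$ of the quadrant, with the axis limits handled by the $(\R^+_0)^2$ summand. The step you flag as needing the most care (the monotone sweep of normals in the converse inclusion) is treated no more explicitly in the paper's own proof, so your sketch is at the same level of detail.
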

\begin{proof}
Due to the 1-homogeneity  of $\theta$
\begin{align*}
\langle \nabla \theta(\lambda p), \lambda r \rangle = \lim_{\epsilon \to 0} \frac{\theta(\lambda (p+\epsilon r))-\theta(\lambda p)}{\epsilon} 
= \lambda \lim_{\epsilon \to 0} \frac{\theta(p+\epsilon r)- \theta(p)}{\epsilon}  = \lambda\langle \nabla \theta( p), r \rangle
\end{align*}
for $p \in (0,+\infty)^2$, $\lambda >0$, and all $r\in \R^2$, which leads to $\nabla \theta(\lambda p) = \nabla \theta(p)$ for $p \in (0,+\infty)^2$ and $\lambda >0$.
Thus, for the curve $c: (0,\infty) \to \R^2; \; q \mapsto (q^{-1/2},q^{1/2})$ the set of tangent planes at $(c(q),\theta(c(q)))$ spanned by $(\nabla \theta(c(q)),1)$ and $(c(s),\theta(c(q)))$ for $q\in (0,\infty)$
is already the complete set of affine tangent planes to the graph of $\theta$ over $(0,\infty)^2$. Thus, by continuity of $\theta$ on $[0,\infty)^2$ 
we get $\theta(0) + \langle r,p\rangle \geq \theta(p)$ for $r \in \{ \nabla \theta(c(q)) \,:\, q \in (0,\infty) \}$. From this we deduce that
$\supdiff \theta(0) \supset  \{ \nabla \theta(c(q)) \,:\, q \in (0,\infty) \}  +  (\R^+_0)^2\,$.
Since $\supdiff \theta(0)$ is a closed set \cite[Prop.~16.3]{ConvexFunctionalAnalysis-11}, this implies
\[\supdiff \theta(0) \supset \ol{ \{ \nabla \theta(c(s)) \,:\, q \in (0,\infty) \} } + (\R^+_0)^2\,.\]
Furthermore, for any $w\in \R^2\setminus \{0\}$ with $w_1,\, w_2 \leq 0$ there exists a $p'$ with  
$\theta(0) + \langle (r+w),p\rangle < \theta(p')$
Since $\theta(z)=0$ for $z \in (\{0\} \times \R^+_0) \cup (\R^+_0 \times \{0\})$ and $\theta(z)=-\infty$ outside $[0,\infty)^2$ we finally obtain
that $\theta(0) + \langle r,p\rangle \geq \theta(p)$ if and only if $r \in \ol{ \{ \nabla \theta(c(q)) \,:\, q \in (0,\infty) \} } + (\R^+_0)^2$, which proves the claim.
\end{proof}

\paragraph*{Logarithmic Mean.}
Now, we turn to the specific case when $\theta=\theta_{\tn{log}}$ is the logarithmic mean \eqref{def:means}.
For $s>0$ $\lim_{t \searrow 0} \partial_1 \theta(t,s)= \lim_{t \searrow 0} \partial_2 \theta(s,t) = +\infty$. That is, $\nCone_\SetKSmall(s,0,0)=\{0\} \times \R^-_0 \times \R^-_0$ and analogous  
$\nCone_\SetKSmall(0,s,0)=  \R^-_0 \times \{0\} \times \R^-_0$. Consequently, the algorithm simplifies as follows:
\bigskip

\hspace{10ex} \begin{minipage}{0.8\linewidth}
\begin{algorithmic}
\Function{Project$\SetKSmall$}{$p_1$,$p_2$,$p_3$}
\LineIf{$0 \leq p_3 \leq \theta(p_1,p_2)$} \Return $(p_1,p_2,p_3)$
\LineIf{$p_3 \leq 0$} \Return $(\max\{p_1,0\},\max\{p_2,0\},0)$
\LineIf{$(p_1 \leq 0) \wedge (p_2 \leq 0) \wedge (-p_1/p_3,-p_2/p_3) \in \supdiff \theta(0)$}
\Return $(0,0,0)$
\State \Return \Call{Project$\SetKSmall$Top}{$p_1$,$p_2$,$p_3$}
\EndFunction
\end{algorithmic}
\end{minipage}
\bigskip

The inclusion in $\supdiff \theta(0)$  can be tested as follows.
\begin{Lemma}
\label{lem:ProjKLogSupDiff}
Let $z=(z_1,z_2) \in \R^2$. If $\min\{z_1,z_2\} \leq 0$ then $z \notin \supdiff \theta(0)$. Otherwise, there is a unique $q_1 \in (0,+\infty)$ such that $\partial_1 \theta(q_1^{-1/2},q_1^{1/2})=z_1$ and then
$z \in \supdiff \theta(0)$ if and only if  $z_2 \geq \partial_2 \theta(q^{-1/2},q^{1/2})$.
\end{Lemma}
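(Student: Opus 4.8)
The plan is to reduce the question to a single scalar function and then read the answer off the description of $\supdiff\theta(0)$ in Lemma~\ref{lem:ProjKSupDiff}. First I would make the gradient of the logarithmic mean along the parametrizing curve $c(q)=(q^{-1/2},q^{1/2})$ explicit: writing $\theta_{\tn{log}}(s,t)=(t-s)/(\log t-\log s)$ and differentiating, on $c(q)$ one has $\log t-\log s=\log q$, $(t-s)/s=q-1$ and $(t-s)/t=(q-1)/q$, so that
\[
  g(q):=\partial_1\theta(q^{-1/2},q^{1/2})=\frac{q-1-\log q}{(\log q)^2},\qquad
  \partial_2\theta(q^{-1/2},q^{1/2})=\frac{\log q-(q-1)/q}{(\log q)^2}=g(1/q),
\]
all expressions being extended continuously by $1/2$ at $q=1$. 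Since $\log q<q-1$ for $q\neq1$, both $g(q)$ and $g(1/q)$ are strictly positive for every $q>0$. By Lemma~\ref{lem:ProjKSupDiff}, every element of $\supdiff\theta(0)$ has the form $(g(q),g(1/q))+(a,b)$ with $a,b\geq0$ and hence has strictly positive components; this already proves the first assertion, namely that $z\notin\supdiff\theta(0)$ whenever $\min\{z_1,z_2\}\leq0$.

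The core step is to show that $g\colon(0,\infty)\to(0,\infty)$ is a strictly increasing bijection. A direct differentiation gives $g'(q)=\bigl((q+1)\log q-2(q-1)\bigr)/\bigl(q(\log q)^3\bigr)$, so it suffices to control the sign of $h(q):=(q+1)\log q-2(q-1)$. Here I would use that $h(1)=h'(1)=0$ and $h''(q)=(q-1)/q^2$: thus $h'$ has a strict minimum $0$ at $q=1$, so $h'>0$ off $q=1$ and $h$ is strictly increasing; since $h(1)=0$, $h(q)$ has the same sign as $\log q$, and therefore $g'>0$ on all of $(0,\infty)$. Writing $g(q)=(q-1)/(\log q)^2-1/\log q$ shows $g(q)\to0$ as $q\to0^+$ and $g(q)\to+\infty$ as $q\to+\infty$, so $g$ is onto. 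In particular, for any $z_1>0$ there is a unique $q_1\in(0,\infty)$ with $\partial_1\theta(q_1^{-1/2},q_1^{1/2})=z_1$, which is the existence-and-uniqueness part of the claim.

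For the inclusion criterion I would first note that the curve $C:=\{(g(q),g(1/q)):q>0\}$ is already closed in $\R^2$: if $(g(q_n),g(1/q_n))$ converges in $\R^2$, then (since $g$ is increasing with $g(q)\to\infty$ at $+\infty$) the $q_n$ stay in a compact subset of $(0,\infty)$, and continuity of $g$ produces a limit point on $C$. Hence Lemma~\ref{lem:ProjKSupDiff} simplifies to $\supdiff\theta(0)=C+(\R^+_0)^2$. Now fix $z$ with $z_1,z_2>0$ and let $q_1$ be as above. If $z_2\geq g(1/q_1)$ then $z=(g(q_1),g(1/q_1))+(0,z_2-g(1/q_1))\in\supdiff\theta(0)$. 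Conversely, if $z=(g(q),g(1/q))+(a,b)$ with $a,b\geq0$, then $g(q)\leq z_1=g(q_1)$ forces $q\leq q_1$ by monotonicity of $g$, hence $1/q\geq1/q_1$ and $z_2=g(1/q)+b\geq g(1/q)\geq g(1/q_1)=\partial_2\theta(q_1^{-1/2},q_1^{1/2})$, which is exactly the stated criterion.

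The only genuinely non-routine point is the monotonicity of $g$ in the second step; the two-derivative sign analysis of $h$ above is the cleanest argument I see, and everything else is bookkeeping built on Lemma~\ref{lem:ProjKSupDiff}. A small subtlety to be handled carefully is the closedness of the gradient curve $C$, needed so that the closure in Lemma~\ref{lem:ProjKSupDiff} can be dropped; this follows from the boundary behaviour of $g$ established along the way.
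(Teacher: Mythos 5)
Your proof is correct and follows essentially the same route as the paper's: explicit formulas for $\nabla\theta$ along the curve $c(q)$, strict monotonicity of $q\mapsto\partial_1\theta(q^{-1/2},q^{1/2})$ via the sign of $(q+1)\log q-2(q-1)$, the limits $0$ and $+\infty$ at the ends of $(0,\infty)$, and the final monotonicity comparison based on Lemma~\ref{lem:ProjKSupDiff}. You are in fact somewhat more careful than the paper in two places the paper glosses over, namely the two-derivative verification that the numerator of $g'$ changes sign exactly at $q=1$ and the closedness of the gradient curve, which justifies dropping the closure appearing in Lemma~\ref{lem:ProjKSupDiff}.
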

\begin{proof}
Note that for the logarithmic mean
$\supdiff \theta(0) \subset (0,+\infty)^2$
and therefore $z \notin \supdiff \theta(0)$ if $\min\{z_1,z_2)\leq 0$.
One finds that \[\partial_1 \theta(q^{-1/2},q^{1/2})=\frac{q-1-\log(q)}{\log^2(q)}\] is monotone increasing with $\partial_1 \theta(q^{-1/2},q^{1/2}) \to 0$ as $q \to 0$ and $\partial_1 \theta(q^{-1/2},q^{1/2}) \to +\infty$ as $q \to +\infty$.
Indeed,  for $\beta(q)=\partial_1 \theta(q^{-1/2},q^{1/2})$ with $\beta(1):=\frac12$ we obtain a continuous extension on $(0,\infty)$. Furthermore, we consider 
$\beta'(q) = \frac{2(1-q) + \log(q)(1+q)}{q \log^3(q)}$ with continuous extension $\frac16$ for $q=1$ and verify that $2(1-q) + \log(q)(1+q)$ is negative for $q<1$ and 
positive for  and $q>1$. This implies that $\beta'(q) >0$.
Furthermore, by symmetry we obtain that $\partial_2 \theta(q^{-1/2},q^{1/2})$ is monotone decreasing with $\partial_2 \theta(q^{-1/2},q^{1/2}) \to +\infty$ as $q \to 0$ and $\partial_2 \theta(q^{-1/2},q^{1/2}) \to 0$ as $q \to +\infty$.  
By Lemma \ref{lem:ProjKSupDiff} 
\begin{align*}
\supdiff \theta(0) = \{ \nabla \theta(q^{-1/2},q^{1/2}) \,:\, q \in (0,+\infty) \} +  (\R_+)^2\,.
\end{align*}	
Thus, for every $z\in (0,+\infty)^2$ there is a unique $q_1\in (0,+\infty)$
such that $\partial_1 \theta(q_1^{-1/2},q_1^{1/2})=z_1$ and $z_1 \geq \partial_1 \theta(q^{-1/2},q^{1/2})$ if and only if $q\leq q_1$.
Furthermore, there is a unique $q_2\in (0,+\infty)$
such that $\partial_2 \theta(q_2^{-1/2},q_2^{1/2})=z_2$ and $z_2 \geq \partial_2 \theta(q^{-1/2},q^{1/2})$ if and only if $q\geq q_2$.
Hence, $z\in \supdiff \theta(0)$ if and only if  $q_2 \leq q_1$, which is equivalent to $z_2 \geq \partial_2 \theta(q_1^{-1/2},q_1^{1/2})$.
\end{proof}
\begin{Remark}[Comments on Numerical Implementation]
The sought-after $q$ in Lemma \ref{lem:ProjKLogSupDiff} can be determined with a one-dimensional Newton iteration.
The function $q \mapsto \partial_1 \theta(q^{-1/2},q^{1/2})$ becomes increasingly steep as $q \to 0$ which leads to increasingly unstable Newton iterations as $z_1$ approaches $0$. On $q \in [1,+\infty)$ the function is rather flat and easy to invert numerically.
To avoid these numerical problems, note that the roles of $z_1$ and $z_2$ in Lemma \ref{lem:ProjKLogSupDiff} can easily be swapped which corresponds to the transformation $q \leftrightarrow q^{-1}$.
Moreover, for $\max\{z_1,z_2\} < \frac{1}{2}$ one has $z \notin \supdiff \theta(0)$.
With this rule and by swapping the values of $z_1$ and $z_2$ if $z_1 < z_2$ one can always remain in the regime $q \in [1,+\infty)$.
Additionally, we recommend to replace the function $\theta(s,t)$ and its derivatives by a local Taylor expansion near the numerically unstable diagonal $s=t$.
\end{Remark}

\paragraph*{Geometric Mean.}
Furthermre, let us consider the case where $\theta=\theta_{\tn{geo}}$ is the geometric mean \eqref{def:means}.
For $s>0$ we again find $\lim_{t \searrow 0} \partial_1 \theta(t,s)= \lim_{t \searrow 0} \partial_2 \theta(s,t) = +\infty$ and consequently the same simplification of the algorithm applies as in the case of the logarithmic mean.
For the test of the inclusion $z=(z_1,z_2) \in \supdiff \theta(0)$, we argue as in the proof of Lemma \ref{lem:ProjKLogSupDiff}.
The functions $\partial_1 \theta(q^{-1/2},q^{1/2}) = \frac{1}{2} q^\frac12$  and  
$\partial_2 \theta(q^{-1/2},q^{1/2}) = \frac{1}{2} q^{-\frac12}$ have the same monotonicity properties as for the logarithmic mean.
Therefore, if $\min\{z_1,z_2\} \leq 0$ then $z \notin \supdiff \theta(0)$.
Otherwise, $q_1=4\,z_1^2$ and thus the condition $\partial_2 \theta(q_1^{-1/2},q_1^{1/2}) \leq z_2$ is equivalent to $z_1 \cdot z_2 \geq \frac14$.
To summarize, we have obtained
\begin{align*} 
\supdiff \theta(0) = \left\{ z\in \R^2 \,\colon\, 
z_1 \cdot z_2 \geq \tfrac{1}{4}  \wedge \min\{z_1,z_2\}>0 \right\}\,.
\end{align*}

\subsection[Proximal Mapping of I*J+/-]{Proximal Mapping of ${\indicatorFct^{\ast}_{\SetJRho}}$}
Note that $\indicatorFct_\SetJRho$ is a 1-homogeneous function. Hence, ${\indicatorFct^\ast_{\SetJRho}}$ will once again be an indicator function and $\prox_{\indicatorFct_\SetJRho^\ast}$ a projection. 
Consequently, the proximal mapping is independent of the step size $\sigma$, i.e.~$\prox_{\sigma\,\indicatorFct_\SetJRho^\ast}=\prox_{\indicatorFct_\SetJRho^\ast}$.
To compute $\prox_{\indicatorFct_\SetJRho^\ast}$ we use Moreau's decomposition \cite[Thm.~14.3]{ConvexFunctionalAnalysis-11} that implies
\begin{align}
\label{eq:ProjJDualMoreau}
\prox_{\indicatorFct_\SetJRho^\ast}=\id-\prox_{\indicatorFct_\SetJRho}=\id -\proj_\SetJRho
\end{align}
where $\id$ is the identity map on $\FESpaceZeroNodes \times (\FESpaceZeroEdges)^2$. To compute 
$\proj_\SetJRho(\massNodes, \massVX, \massVY )$ for a point 
$(\massNodes, \massVX, \massVY ) \in \FESpaceZeroNodes \times (\FESpaceZeroEdges)^2$ one has to find the minimizer 
$(\massNodes^\pr, {\massVX}^\pr, {\massVY}^\pr) \in \SetJRho$ of
\begin{align*}
\sum_{i=0}^{N-1} \NormV{\massNodes^\pr(t_i,\cdot)-\massNodes(t_i,\cdot)}^2 + \NormE{{\massVX}^\pr(t_i,\cdot,\cdot)-\massVX(t_i,\cdot,\cdot)}^2
+ \NormE{{\massVY}^\pr(t_i,\cdot,\cdot)-\massVY(t_i,\cdot,\cdot)}^2\,.
\end{align*}
Recall that for any $\massNodes^\pr \in \FESpaceZeroNodes$ there is precisely one pair $({\massVX}^\pr,{\massVY}^\pr) \in (\FESpaceZeroEdges)^2$ such that $(\massNodes^\pr,{\massVX}^\pr,{\massVY}^\pr) \in \SetJRho$, see \eqref{def:SetJ}. Therefore, one has to find $\massNodes^\pr \in \FESpaceZeroNodes$ which minimizes
\begin{align*}
&\sum_{i=0}^{N-1} \sum_{x \in \SetNodes} |\massNodes^\pr(t_i,x) - \massNodes(t_i,x)|^2 \pi(x)
 + \frac{1}{2}\!\! \sum_{(x,y) \in \SetNodes^2} |\massNodes^\pr(t_i,x) - \massVX(t_i,x,y)|^2 Q(x,y) \pi(x) \\
& + \frac{1}{2}\!\! \sum_{(x,y) \in \SetNodes^2} |\massNodes^\pr(t_i,y) - \massVY(t_i,x,y)|^2 Q(x,y) \pi(x)\,.
\end{align*}
The optimality condition in $\massNodes^\pr$ in combination with the reversibility $Q(x,y) \pi(x) = Q(y,x) \pi(y)$ yields for $i=0, \ldots, N-1$, $x \in \SetNodes$
\begin{align*}
\massNodes^\pr(t_i,x) = \frac{1}{2} \left( \massNodes(t_i,x) + \frac{1}{2} \sum_{y \in \SetNodes} (\massVX(t_i,x,y) + \massVY(t_i,y,x) ) Q(x,y) \right)
\end{align*}
and subsequently ${\massVX}^\pr(t_i,x,y)=\massNodes^\pr(t_i,x)$, ${\massVY}^\pr(t_i,x,y)=\massNodes^\pr(t_i,y)$ for $(x,y) \in \SetNodes \times \SetNodes$.
Finally, for  $(\massNodes^\pr,{\massVX}^\pr,{\massVY}^\pr)=\proj_{\SetJRho}(\massNodes,\massVX,\massVY)$ using \eqref{eq:ProjJDualMoreau} one gets 
$\prox_{\indicatorFct_{\SetJRho}^\ast}(\massNodes,\massVX,\massVY)=(\massNodes,\massVX,\massVY)-(\massNodes^\pr,{\massVX}^\pr,{\massVY}^\pr)$.

\subsection[Proximal Mapping of I*Javg]{Proximal Mapping of ${\indicatorFct^{\ast}_{\SetJavg}}$}
Once more, we use Moreau's decomposition, \eqref{eq:ProjJDualMoreau}, to compute the proximal mapping of $\indicatorFct^{\ast}_{\SetJavg}$ via the projection onto $\SetJavg$.
Note that the original problem \eqref{def:metricWithUAndV} does not change if we add the constraint $\massNodes_h(t_0,\cdot)=\massNodes_A$ and $\massNodes_h(t_N,\cdot)=\massNodes_B$ to the set $\SetJavg$. That is, we consider the projection onto the set
\begin{align*}
\SetJavgMod =\left\{ (\massNodes_h,\massElement_h \in \SetJavg \,\colon\,
\massNodes_h(t_0,\cdot)=\massNodes_A,\,
\massNodes_h(t_N,\cdot)=\massNodes_B \right\}.
\end{align*}
To compute the projection we have to solve
\begin{align*}
  \argmin_{ ( \massNodes_h^\pr, \massElement_h^\pr ) \in \SetJavgMod   } 
  \frac{1}{2} \sum_{i=0}^N  \sum_{x \in \SetNodes} |\massNodes_h^\pr(t_i,x) \!-\! \massNodes_h(t_i,x)|^2 \pi(x) 
  + \frac{1}{2} \sum_{i=0}^{N-1}  \sum_{x \in \SetNodes} |\massElement_h^\pr(t_i,x) \!-\! \massElement_h(t_i,x)|^2 \pi(x) \, . 
\end{align*}
Thus, we introduce a Lagrange multiplier $\lambda_h \in \FESpaceZeroNodes$ and define the corresponding Lagrangian
\begin{align*}
 L( \massNodes_h^\pr, \massElement_h^\pr, \lambda_h ) 
 & =  \frac{1}{2} \sum_{i=0}^N  \sum_{x \in \SetNodes} |\massNodes_h^\pr(t,x) \!-\! \massNodes_h(t,x)|^2 \pi(x) 
  + \frac{1}{2} \sum_{i=0}^{N-1}  \sum_{x \in \SetNodes} |\massElement_h^\pr(t,x) \!-\! \massElement_h(t,x)|^2 \pi(x) \\
 & - \sum_{i=0}^{N-1} \sum_{x \in \SetNodes} \lambda_h(t_i,x) \left( \avg_h \massNodes_h^\pr(t_i,x) - \massElement_h^\pr(t_i,x) \right) \pi(x)
 \, .
\end{align*}
We know directly from the added boundary constraints that
\begin{align*}
\massNodes_h^\pr(t_0,x) & = \massNodes_A, &
\massNodes_h^\pr(t_N,x) & = \massNodes_B.
\end{align*}
The optimality condition in $\massNodes_h^\pr$ for all $x \in \SetNodes$ and for interior time steps $i=1,\ldots,N-1$ reads as
\begin{align}\label{eq:AverageMassProjMassNodes}
 \massNodes_h^\pr(t_i,x) = \massNodes_h(t_i,x) + \tfrac{1}{2} (\lambda_h(t_{i-1},x) + \lambda_h(t_i,x) ) \, .
\end{align}
Further, the optimality condition in $\massElement_h^\pr$ implies that on each interval 
\begin{align}\label{eq:AverageMassProjMassElement}
 \massElement_h^\pr(t_i,x) = \massElement_h(t_i,x) - \lambda_h(t_i,x) \, .
\end{align}
Combining both with the constraint $\avg_h \massNodes_h^\pr(t_i,x) = \massElement_h^\pr(t_i,x)$, we obtain
\begin{align*}
 \massElement_h(t_i,x) - \lambda_h(t_i,x) 
 &= \massElement_h^\pr(t_i,x)
 = \avg_h \massNodes_h^\pr(t_i,x)\\
 &= \avg \massNodes_h(t_i,x) + \tfrac{1}{4} (\lambda_h(t_{i-1},x) + 2 \lambda_h(t_i,x) + \lambda_h(t_{i+1},x) ) 
\end{align*}
for all interior elements $\interval{i}$ with $i=1,\ldots,N-2$ and for all $x \in \SetNodes$.
Analogously, using the boundary conditions we get
\begin{align*}
 & \massElement_h(t_0,x) - \lambda_h(t_0,x) = \tfrac{1}{2} (\massNodes_A(x) + \massNodes_h(t_1,x) ) + \tfrac{1}{4} ( \lambda_h(t_0,x) + \lambda_h(t_1,x) ) \\
 & \massElement_h(t_{N-1},x) - \lambda_h(t_{N-1},x) = \tfrac{1}{2} (\massNodes_B(x) + \massNodes_h(t_{N-1},x) ) + \tfrac{1}{4} ( \lambda_h(t_{N-2},x) + \lambda_h(t_{N-1},x) ) \, .
\end{align*}
Thus, for each $x \in \SetNodes$ the Lagrange multiplier $\lambda_h$ satisfies the linear system of equations
\begin{align*}
 & \tfrac{1}{4} ( 5 \lambda_h(t_0,x) + \lambda_h(t_1,x) ) = \massElement_h(t_0,x) - \tfrac{1}{2} ( \massNodes_A(x) + \massNodes_h(t_1,x) ) \\
 & \tfrac{1}{4} ( \lambda_h(t_{i-1},x) + 6 \lambda_h(t_i,x) + \lambda_h(t_{i+1},x) ) = \massElement_h(t_i,x) - \tfrac{1}{2} ( \massNodes_h(t_{i+1},x) + \massNodes_h(t_i,x) ) \quad \forall i=1,\ldots,N-2 \\
 & \tfrac{1}{4} ( \lambda_h(t_{N-2},x) + 5 \lambda_h(t_{N-1},x) ) = \massElement_h(t_{N-1},x) - \tfrac{1}{2} ( \massNodes_B(x) + \massNodes_h(t_{N-1},x) )
\end{align*}
This system is solvable, since the corresponding matrix with diagonal $(5,6,\ldots,6,5)$ and off-diagonal $1$ is strictly diagonal dominant. 
Then, given the Lagrange multiplier $\lambda_h$, the solution of the projection problem is given by \eqref{eq:AverageMassProjMassNodes} and \eqref{eq:AverageMassProjMassElement}.
Finally, the proximal map of $\indicatorFct_{\SetJavg}^{\ast}$ can be computed by Moreau's identity, \eqref{eq:ProjJDualMoreau}.
Thus, to compute the proximal mapping of ${\indicatorFct^{\ast}_{\SetJavgMod}}$ one must solve a sparse system in time for each graph node separately. Since the involved matrix is constant, it can be pre-factored.

\subsection[Proximal Mapping of I*J=]{Proximal Mapping of $\indicatorFct_{\SetJequal}$}
The proximal map of $\indicatorFct_{\SetJequal}$ is  given by the projection
\begin{align*}
 \proj_{\SetJequal}( \massElement_h, \massElementSlack_h )
& = \argmin_{ ( \massElement_h^{\pr}, \massElementSlack_h^{\pr}) \in \FESpaceZeroEdges \times \FESpaceZeroEdges \; : \; \massElement_h^\pr = \massElementSlack_h^\pr } 
 \quad \frac{1}{2} h \sum_{i=0}^{N-1} \sum_{x,y \in \SetNodes}  \left( |\massElement_h - \massElement_h^\pr|^2 + |\massElementSlack_h - \massElementSlack_h^\pr|^2 \right) Q(x,y) \pi(x)\\
 &= \frac{1}{2} ( \massElement_h + \massElementSlack_h, \massElement_h + \massElementSlack_h ) \, .
\end{align*}

\section{Numerical Results}\label{sec:numerics}
In what follows we compare the numerical solution based on our
discretization with the explicitly known solution for a simple model
with just two nodes.  Furthermore, we apply our method to a set of
characteristic test cases to study the qualitative and quantitative
behaviour of the discrete transportation distance.
\paragraph*{Comparison with the exact solution for the 2-node case.}
Consider a two point graph $\SetNodes = \{a,b\}$ with Markov chain and stationary distribution
\begin{align*}
 Q = \begin{pmatrix}
      0 & p \\
      q & 0
     \end{pmatrix} \, ,
  \quad 
  \pi = \begin{pmatrix}
         \frac{q}{p+q} \\
         \frac{p}{p+q}
        \end{pmatrix} \, ,
\end{align*}
where $p,q \in (0,1]$.
For this case, Maas \cite{Ma11} constructed an explicit solution for the geodesic from 
$\massNodes_A = \left( \frac{p+q}{q}, 0 \right)$ to $\massNodes_B = \left( 0, \frac{p+q}{p} \right)$. 
Note that every probability measures on $\SetNodes$ can be described by a single parameter $r \in [-1,1]$ via
\begin{align*}
  \massNodes( r ) = ( \massNodes_a(r), \massNodes_b(r) ) = \left( \frac{p+q}{q} \frac{1-r}{2} \, , \frac{p+q}{p} \frac{1+r}{2} \right) .
\end{align*}
Especially, we have $\massNodes_A = \massNodes(-1)$ and $\massNodes_B = \massNodes(1)$.
Using this representation, Maas showed that for $-1 \leq \alpha \leq \beta \leq 1$
the optimal transport distance is given by
\begin{align}\label{eq:Distance2Point}
 \Wc( \massNodes(\alpha), \massNodes(\beta) ) = \frac{1}{2} \sqrt{\frac{1}{p} + \frac{1}{q}} \int_\alpha^\beta \frac{1}{\sqrt{\theta(\massNodes_a(r), \massNodes_b(r) )}} \d r
\end{align}
and the optimal transport geodesic from $\massNodes(\alpha)$ to $\massNodes(\beta)$ is given by $\massNodes(\gamma(t))$ for $t \in [0,1]$, 
where $\gamma$ satisfies the differential equation
\begin{align}\label{eq:ODE2Point}
 \gamma'(t) = 2 (\beta - \alpha) \Wc( \massNodes(\alpha), \massNodes(\beta) ) \sqrt{\frac{pq}{p+q} \theta( \massNodes(\gamma_a(t)), \massNodes_b(\gamma(t)) } \, .
\end{align}
For the special case, where $\theta$ is the logarithmic mean
$\theta_{\text{log}}$ and $p=q$, one obtains that
$\theta_{\text{log}} ( \massNodes_a(r), \massNodes_b(r) ) =
\frac{r}{\arctanh(r)} \, .$
and consequently the discrete transport distance is given by
$ \Wc( \massNodes(\alpha), \massNodes(\beta) ) = \frac{1}{\sqrt{2p}}
\int_\alpha^\beta \sqrt{ \frac{\arctanh(r)}{r} } \d r\,.  $
Furthermore, the optimal transport geodesic from $\massNodes(\alpha)$
to $\massNodes(\beta)$ is given by $\massNodes(\gamma(t))$ for
$t \in [0,1]$, where $\gamma$ satisfies the differential equation
$ \gamma'(t) = \sqrt{2p} (\beta - \alpha) \Wc( \massNodes(\alpha),
\massNodes(\beta) ) \sqrt{ \frac{\gamma(t)}{\arctanh(\gamma(t))} } \,
.  $
For this two point graph we numerically compute the optimal transport
geodesic. This allows us to evaluate directly the distance $\Wc$,
which we can compare with a numerical quadrature of
\eqref{eq:Distance2Point}.  Using the approximation of $\Wc$, we use
an explicit Euler scheme to compute the solution
$\massNodes_h^{\text{ODE}}$ of the ODE \eqref{eq:ODE2Point}.  For the
case $p=q=1$ we compare our numerical solution to the Euler
approximation for the ODE for $N=2000$ in Fig.~\ref{fig:testone}.
\begin{figure}

\centering
\begin{minipage}{0.8\textwidth}
\resizebox{1.0\textwidth}{!}{
\begin{tikzpicture}[x=\textwidth,y=\textwidth]
 \node at (0.0, 0){\includegraphics[scale=1.0]{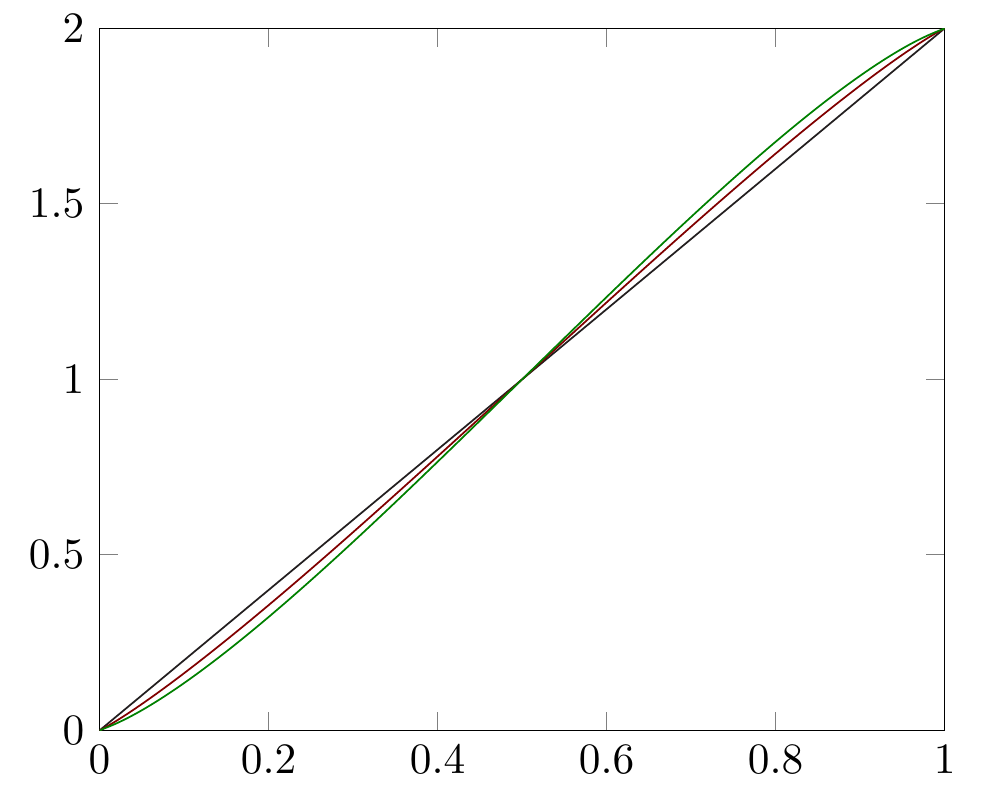}};
 \node at (1.0, 0){\includegraphics[scale=1.0]{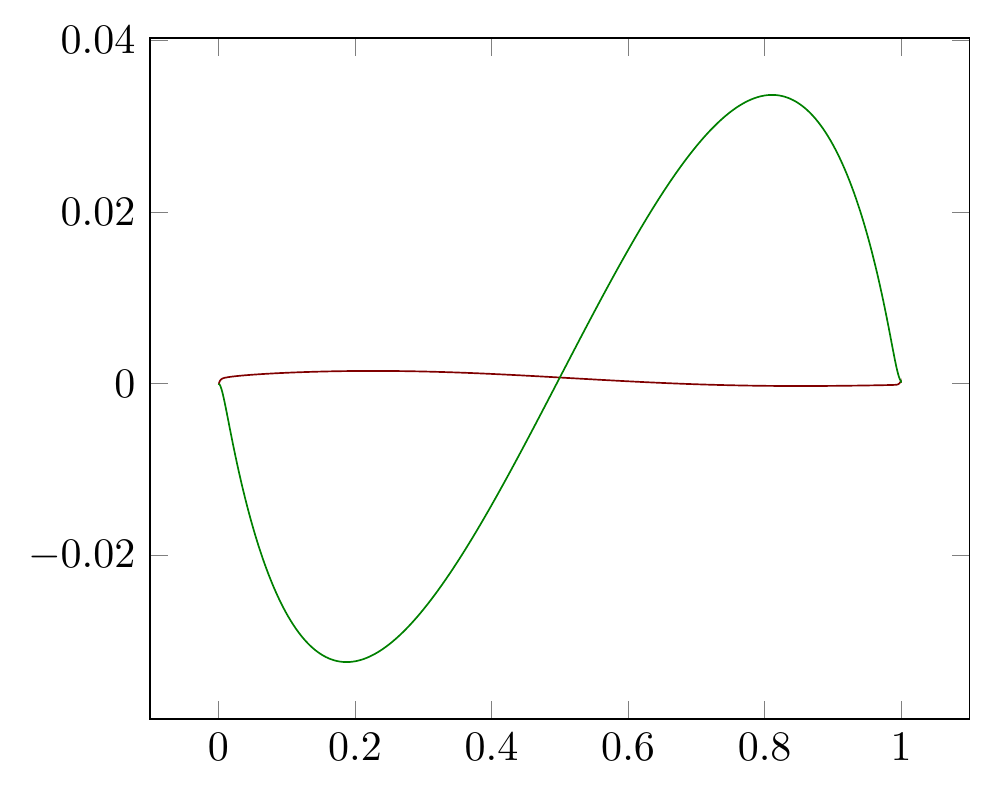}};
\end{tikzpicture}
}
\end{minipage}
\caption{The mass distribution at $b$ is plotted over time $t\in [0,1]$.
Left: Numerical solution for a 2-point graph $\SetNodes = \{a,b\}$ for the logarithmic (red) and geometric (green). The black line represents the diagonal, which 
is the solution in the case of the  (non admissible) arithmetic averaging. 
Right: Difference of the numerical solution for the logarithmic (red) and geometric (green) mean with the Euler scheme solution $\massNodes_h^{\text{ODE}}$ for the logarithmic mean. 
}
\label{fig:testone}
\end{figure}
\paragraph*{Geodesics on some selected graphs.}
Let us consider four different graphs whose nodes and edges form a triangle, the $3\times 3$ lattice, a cube, and a hypercube, respectively.
Figure~\ref{fig:labels} depicts these graphs with labeled nodes and edges. 
In all cases, we set for each node $x$ with $m$ outgoing edges $\pi(x) = \frac{m}{|E|}$ and $Q(x,y) = \frac{1}{\pi(x)|E|}$.  
Figure~\ref{fig:simple} shows numerically computed geodesic paths. The underlying time step size is $h=\frac1{100}$.
\begin{figure}
\centering
\resizebox{0.8\textwidth}{!}{
\begin{tikzpicture}[x=\textwidth,y=\textwidth]
 \node at (0, 0.45){\includegraphics[page=3, height=0.225\textwidth]{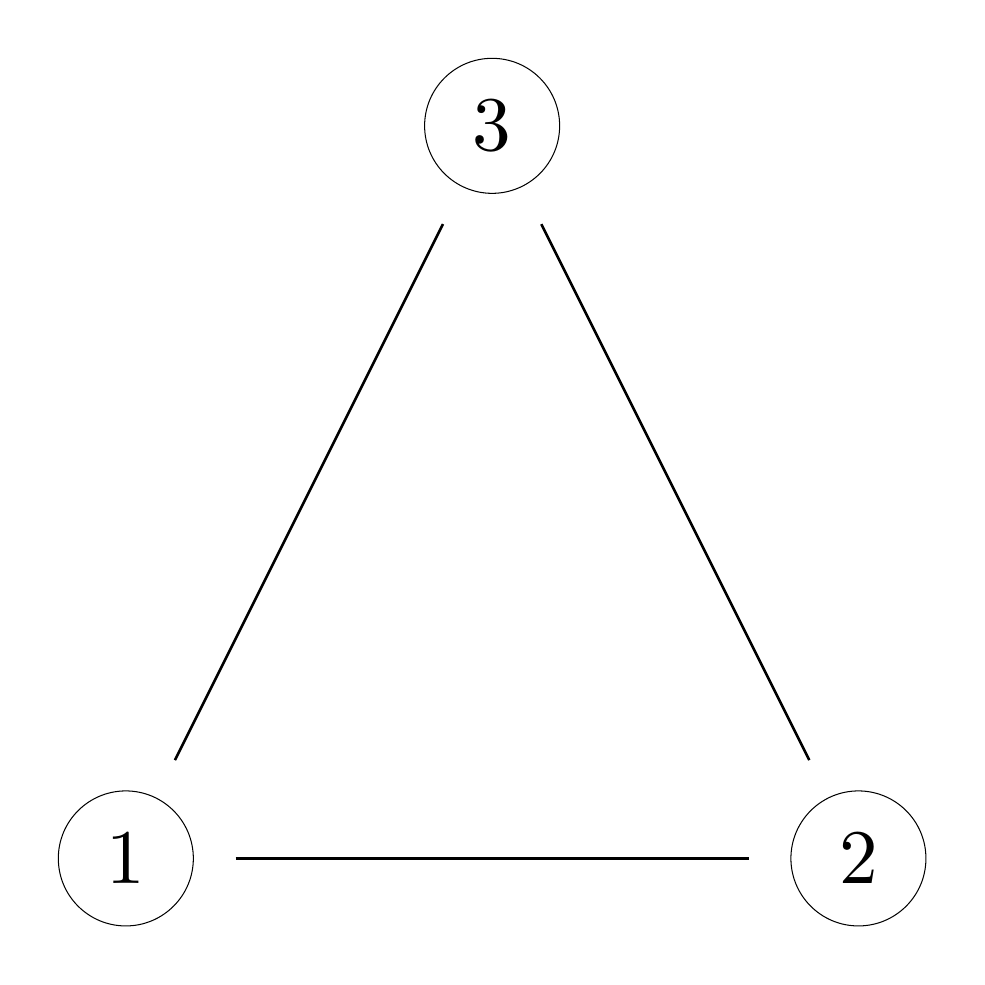}};
 \node at (0, 0.24){\includegraphics[page=3, height=0.225\textwidth]{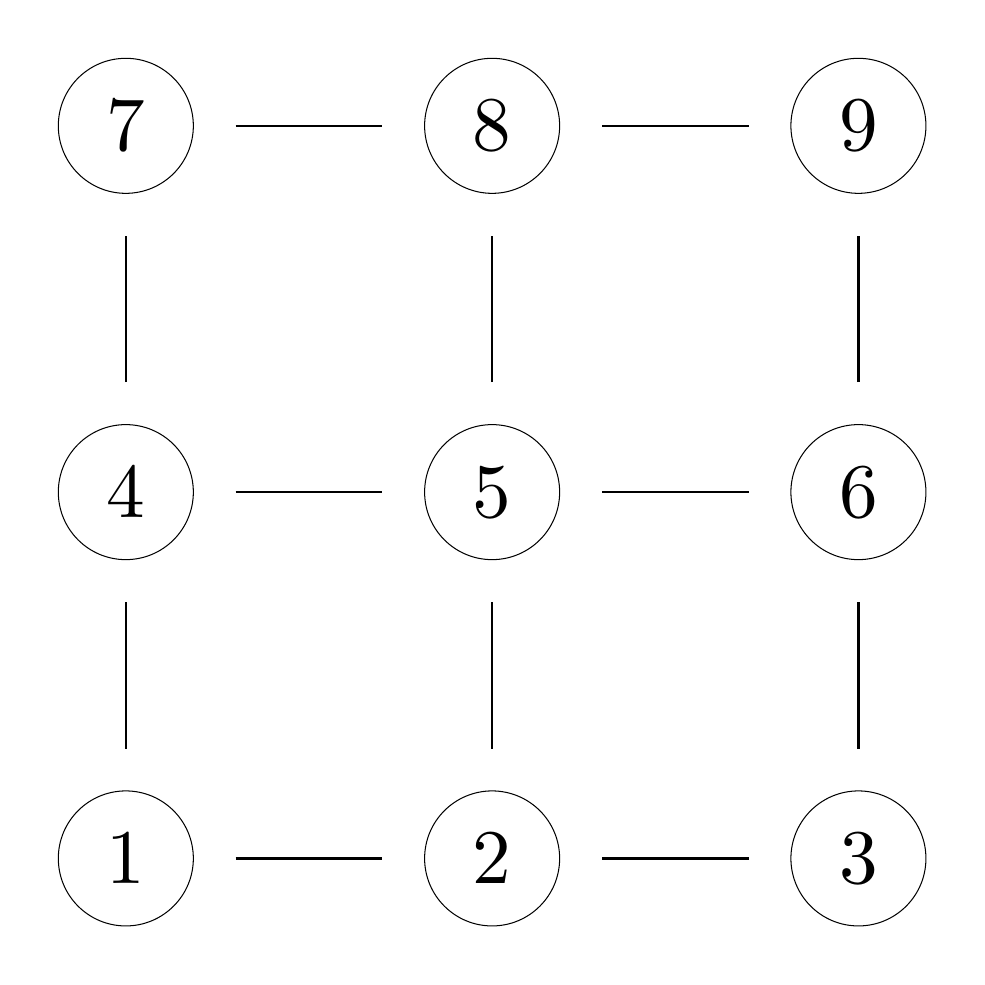}};
 \node at (0, 0){\includegraphics[page=3, height=0.3\textwidth]{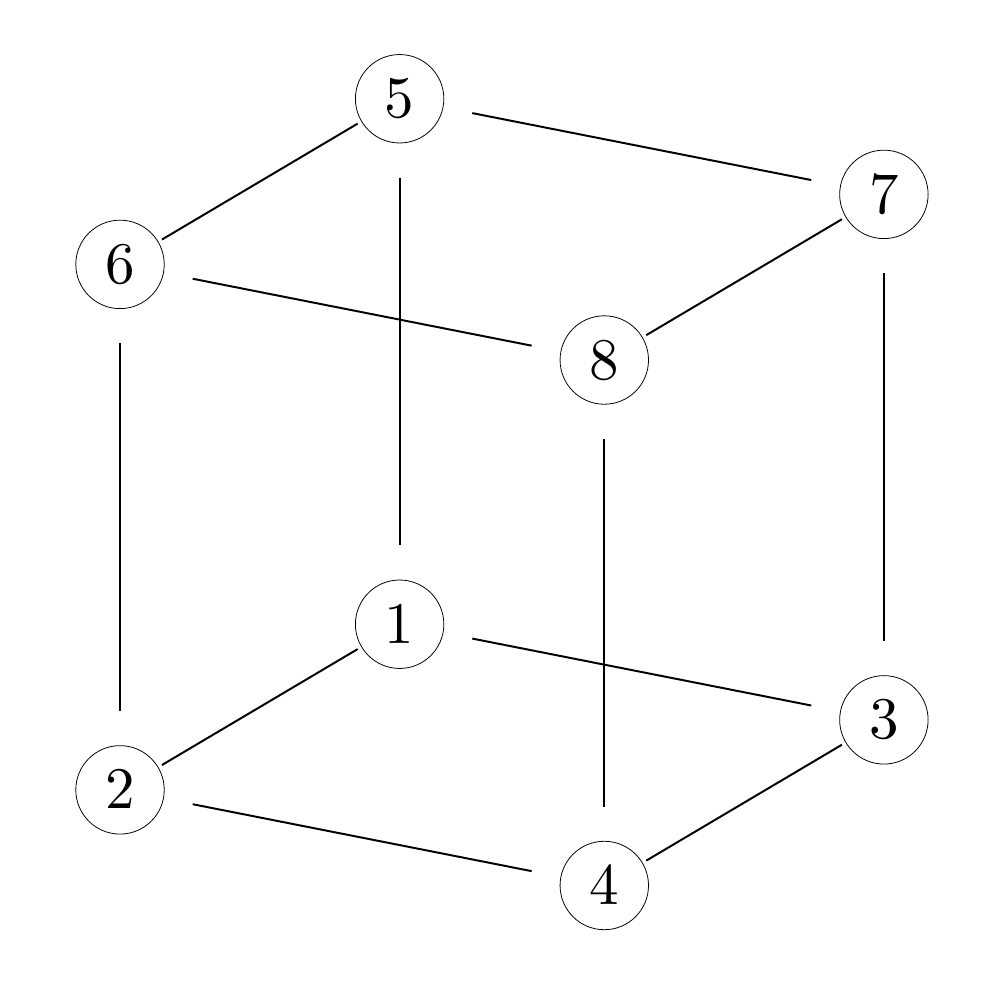}};
  \node at (0.45, 0.2){\includegraphics[page=3, height=0.6\textwidth]{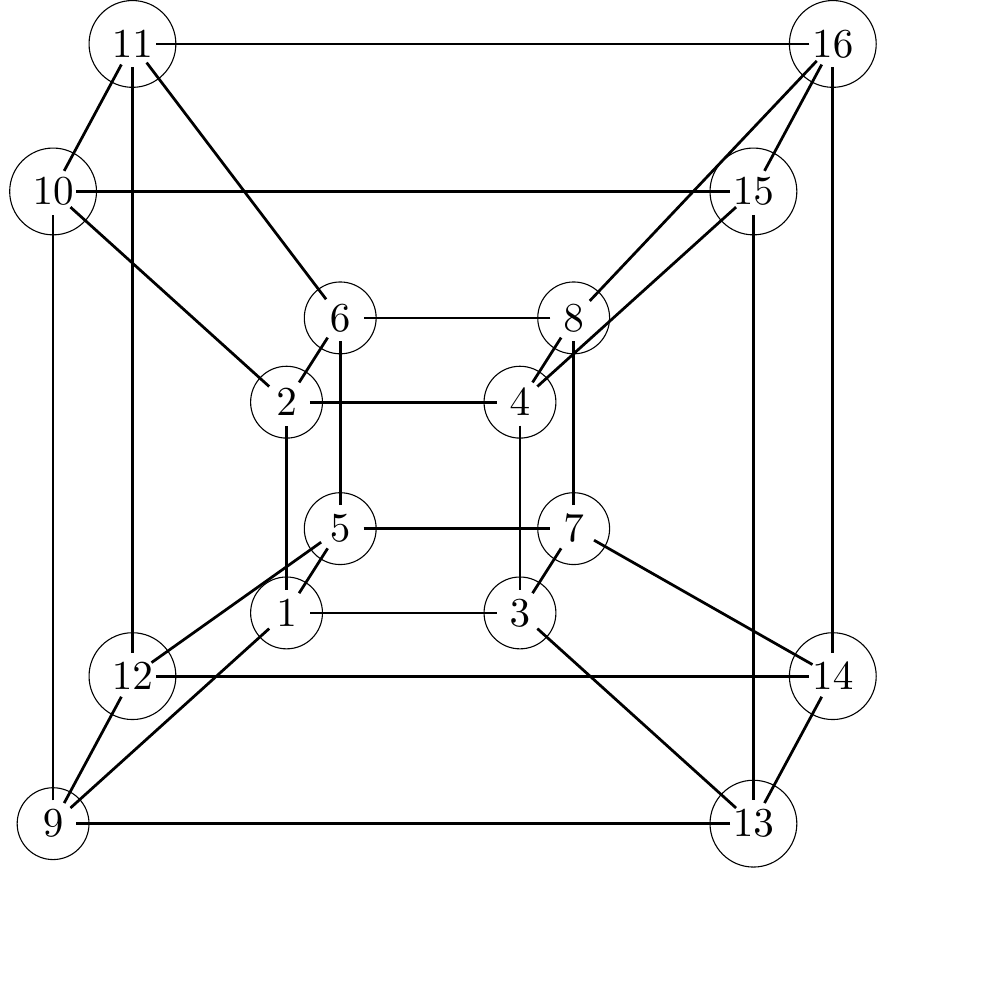}};
\end{tikzpicture}
}

\caption{Labeling of nodes and edges for four different graphs: a triangle, the $3x3$ lattice, a cube, and a hypercube.}
\label{fig:labels}
\end{figure}
The solution $(\massNodes, \momentum)$ is displayed at intermediate
time steps indicated on the arrow in the first row.  For each of these
time steps, blue discs and red arrows superimposed over the graph
display mass and momentum at nodes and on edges, respectively.  The
area of a disc is proportional to the mass $\massNodes(x) \pi(x)$.  A
red arrow connecting nodes $x$ and $y$ renders the momentum
$\momentum(x,y)$.  The direction of the arrow indicates the direction
of the flow, i.e.~it points from $x$ to $y$ if
$\momentum(x,y) = - \momentum(y,x) > 0$
(cf. Lemma~\ref{Lem:AntisymmetryM}).  The thickness of an arrow is
proportional to $|\momentum(x,y)|Q(x,y)\pi(x)$.  Underneath these
graph drawings both, mass and the momentum on nodes and edges, are
plotted in histograms.  The numbering of the columns in these plots
refers to the numbering of nodes and edges in Figure~\ref{fig:labels}.
The plots associated with $t=0$ and $t=1$ show the prescribed boundary
conditions in time.  As the stopping criteria for the iterative
algorithm in \eqref{eq:ChambollePock} we choose
$\int_0^1\NormV{\massNodes^{k+1} - \massNodes^k}^2\d t$ with threshold $10^{-10}$,
where $k$ denotes the iteration step.  Figure~\ref{fig:hypercube}
visualizes in the same fashion an optimal transport path on the graph
of the hypercube.  Note that for the cube, the hypercube, and the
$3\times 3$ lattice the computed solutions are symmetric. In explicit,
mass and momentum at time $t$ equal the mass and the momentum at time
$1-t$ on point reflected nodes and edges, respectively.  Furthermore,
for the cube and the hypercube the distribution of mass is constant on
all nodes at time $t=\frac12$.  Finally, in
Figure~\ref{fig:MChangesSign} we depict an example of graph with four
nodes, which shows that the sign of the momentum variable on a fixed
edge may change along a geodesic path.

\begin{figure}
\resizebox{0.9\textwidth}{!}{
\begin{tikzpicture}[x=\paperwidth,y=\paperwidth]
 \node at (0, 0){\includegraphics[page=1, scale=1]{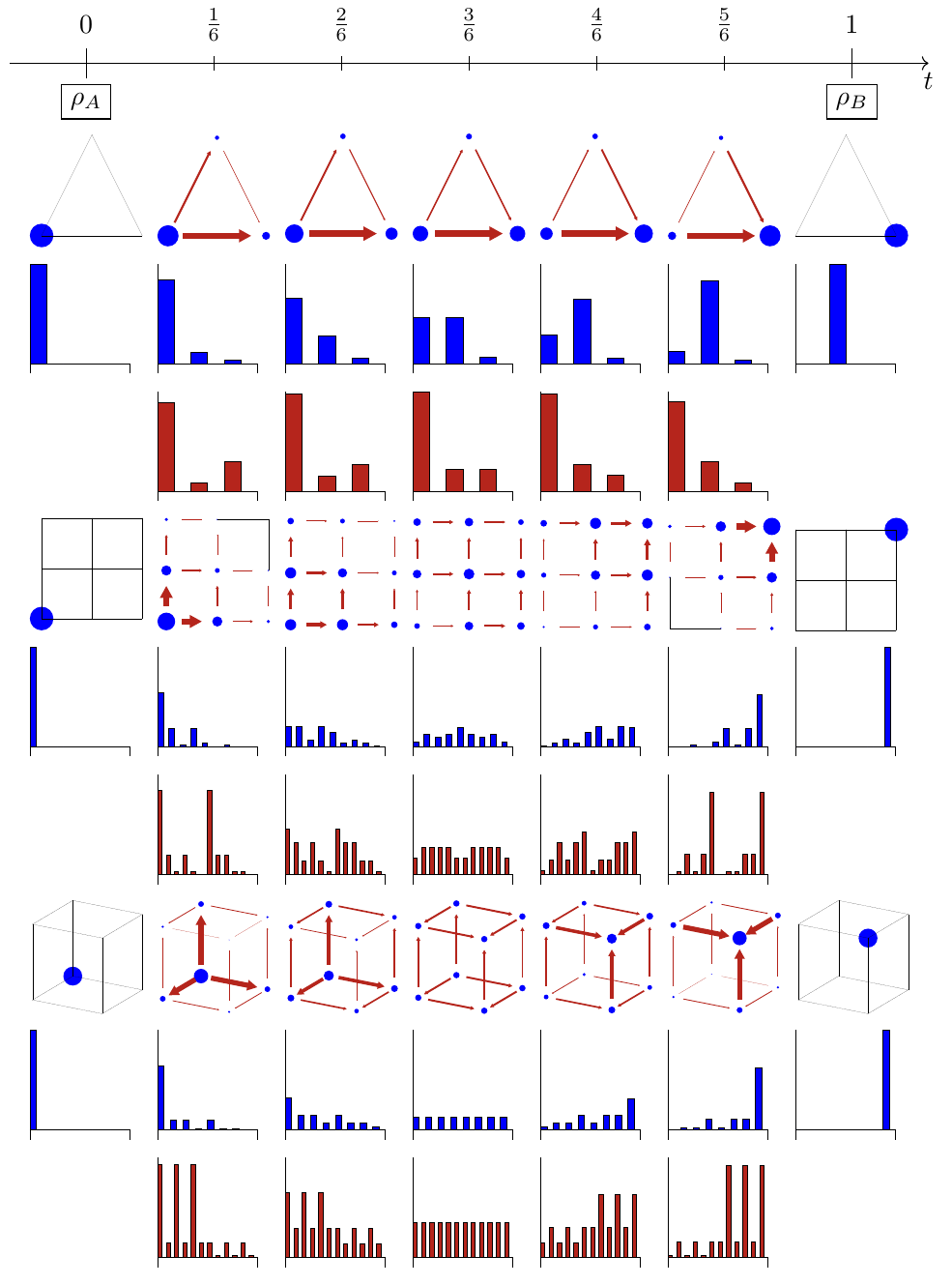}};
\end{tikzpicture}
}
\caption{Numerically computed geodesics on a triangle, a square
  lattice and a cube for prescribed boundary conditions at time $0$
  and $1$. Note in particular the symmetry under time reversal and the spreading of mass at intermediate
  times (equidistribution at $t=\frac12$ for the cube).}
\label{fig:simple}
\end{figure}

\begin{figure}
\centering
\resizebox{1.0\textwidth}{!}{
\begin{tikzpicture}[x=\paperwidth,y=\paperwidth]
 \node at (0, 0){\includegraphics[page=1, scale=0.8]{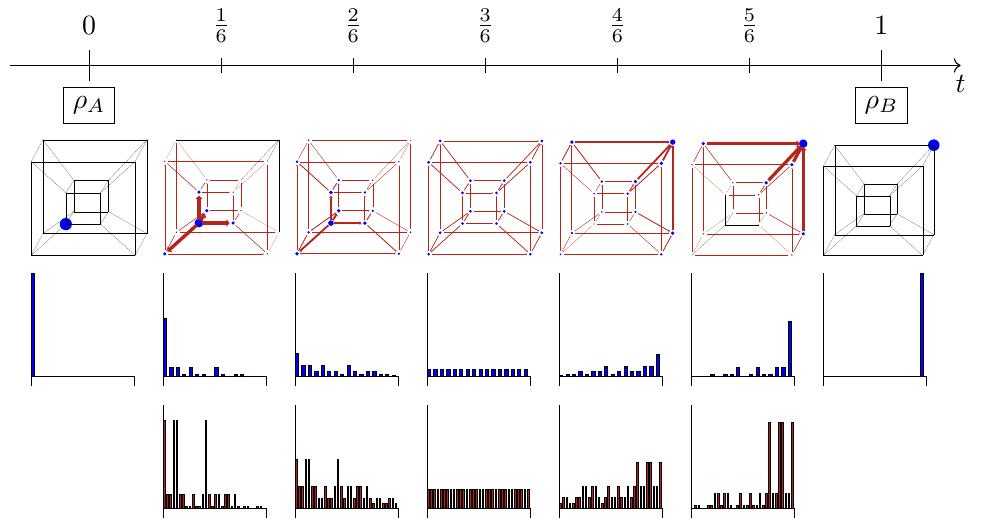}};
\end{tikzpicture}
}
\caption{Top: Numerically computed geodesic on a hypercube. Bottom: Distribution of mass and momentum, note again the symmetry under time reversal and the spreading of mass, with equidistribution at time $t=\tfrac12$.}
\label{fig:hypercube}
\end{figure}

\begin{figure}
\centering
\resizebox{1.0\textwidth}{!}{
\begin{tikzpicture}[x=\paperwidth,y=\paperwidth]
 \node at (0, 0){\includegraphics[page=1, scale=0.8]{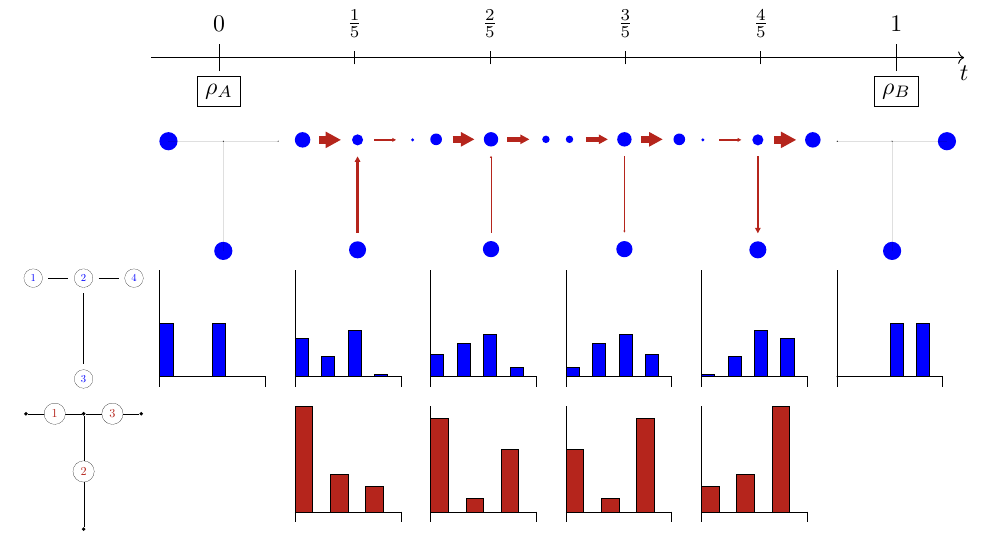}};
\end{tikzpicture}
}
\caption{Numerically computed geodesic on a graph with four nodes. Note that the sign of $\momentum$ for edge $2$ changes (cf.~$t=\frac{1}{5}$ and $t=\frac{4}{5}$).}
\label{fig:MChangesSign}
\end{figure}

\paragraph*{Experimental results related to the Gromov-Hausdorff convergence for simple graphs.} 
In \cite{GiMa13} it was shown that for the $d$-dimensional torus
$\mathbf{T}^d$ the discrete transportation distance $\Wc$ on a
discretized torus $\mathbf{T}^d_M$ with uniform mesh size
$\frac{1}{M}$ converges in the Gromov-Hausdorff metric to the
classical $L^2$-Wasserstein distance on $\mathbf{T}^d$.  In fact, the
optimal transport with respect to the classical $L^2$-Wasserstein
distance between two point masses is a point mass travelling along the
connecting straight line.  Concerning the expected concentration of
the transport along this line we perform the following numerical
experiments for $d=1,\,2$.  We first consider for $d=1$ the unit
interval $I = [0,1]$ and a sequence of space discretizations
$\SetNodes_M = \{ x_0, \ldots, x_{M} \}$ with uniform mesh size
$\frac1M$ with $M\in \N$.  The corresponding Markov kernel $Q_M$ for
$\SetNodes_M$ is defined by
$Q_M(x_i, x_{i+1}) = Q_M(x_i, x_{i-1}) = \frac{1}{2}$ for
$i=1,\ldots, x_{M-1}$ and $Q_M(x_0,x_1) = 1 = Q_M(x_{M},x_{M-1})$.
The continuous $L^2$-Wasserstein geodesic connecting
$\massNodes_A = \delta_0$ and $\massNodes_B = \delta_1$ is given by
the transport of the Dirac measure with constant speed:
\begin{align*}
 \massNodes(t,x) = \delta_t(x) \, .
\end{align*}
In Figure~\ref{fig:GromovHausdorff} we plot the density distribution of the discrete optimal transport geodesic at time $t=\frac{1}{2}$ for different grid sizes 
$\frac1M$. One observes the onset of mass concentration in space at that time at the location $x=\frac12$ for increasing $M$.
\begin{figure}
\centering
\resizebox{0.5\textwidth}{!}{
\begin{tikzpicture}[x=\textwidth,y=\textwidth]
 \node at (0, 0){\includegraphics[scale=3]{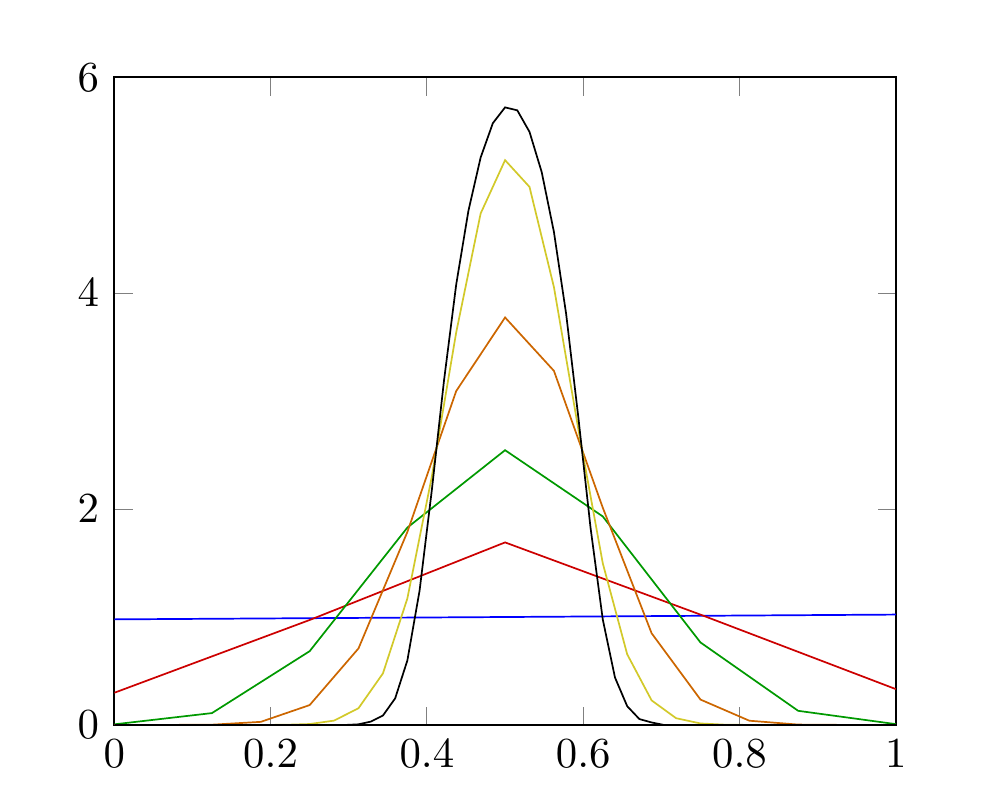}};
\end{tikzpicture}
}
\caption{Linearly interpolated densities for the $\Wc$ geodesic on a one dimensional chain graph between a Dirac mass at the beginning and the end, at $t=0.5$ with $M=2$ (blue), $4$ (red), $8$ (green), $16$ (orange), $32$ (yellow), and $64$ (black).}
\label{fig:GromovHausdorff}
\end{figure}
For $d=2$ we consider a square lattice of uniform grid size $\frac1M$ with $M\in \N$ and nodes $\SetNodes_M = \{ (i/M,j/M) \; : \; i,j \in (0,\ldots,M) \}$. 
The weights of the Markov kernel $Q$ are proportional to the number of adjacent edges.
Now, we investigate a discrete geodesic connecting the Dirac masses $\delta_{(0,0)}$ and $\delta_{(1,1)}$. One expects that for increasing $M$ mass on bands parallel to the space diagonal will decrease.  
In Figure \ref{fig:GromovHausdorffSquare} we plot for decreasing mesh size $\frac{1}{M}$ the in time accumulated density values along the diagonal and the off-diagonals bands of nodes.
More precisely, we define the bands of nodes $l^i_M = \{ (x_1,x_2) \in \SetNodes_M \times \SetNodes_M \; : \; x_2 = x_1 + \frac{i}{M} \}$ ($i=0$ being the diagonal) and compare the values $\int_0^1 \sum_{x \in l^i_M} \massNodes(t,x) \pi(x) \d t$.
\begin{figure}
\centering
\resizebox{0.9\textwidth}{!}{
\begin{tikzpicture}[x=\textwidth,y=\textwidth]
 \node at (0, 0){\includegraphics[scale=1]{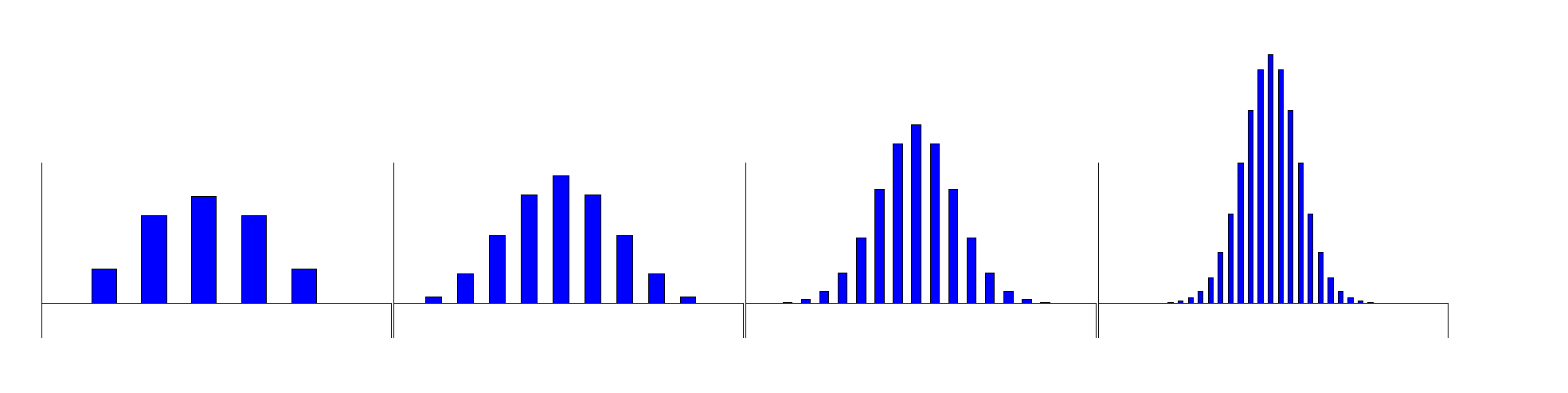}};
\end{tikzpicture}
}
\caption{
 Geodesics in the distance $\Wc$ on a two dimensional grid graph between Dirac masses at diagonally opposite ends. We show accumulated densities along the diagonal and the off-diagonals (see text for details).
From left to right: $M=4$, $8$, $16$, $32$. 
The width of the bars is scaled with the number of lines.
}
\label{fig:GromovHausdorffSquare}
\end{figure}
\paragraph*{Discrete geodesics on an internet network of Europe.}
In Figure~\ref{fig:Internet} we apply the investigated optimal
transport model to a coarse scale internet network of Europe and show
experimental results with masses (data packages) transported from
Dublin, Lisbon, and Madrid to Athens, Stockholm, and Kiev.  Also here,
we set for each node $x$ with $m$ outgoing edges
$\pi(x) = \frac{m}{|E|}$ and $Q(x,y) = \frac{1}{\pi(x)|E|}$, with
$|E|$ the total number of (directed) edges.

\begin{figure}
\resizebox{0.95\textwidth}{!}{
\begin{tikzpicture}[x=\paperwidth,y=\paperwidth]
 \node at (0, 0){\includegraphics[scale=0.6]{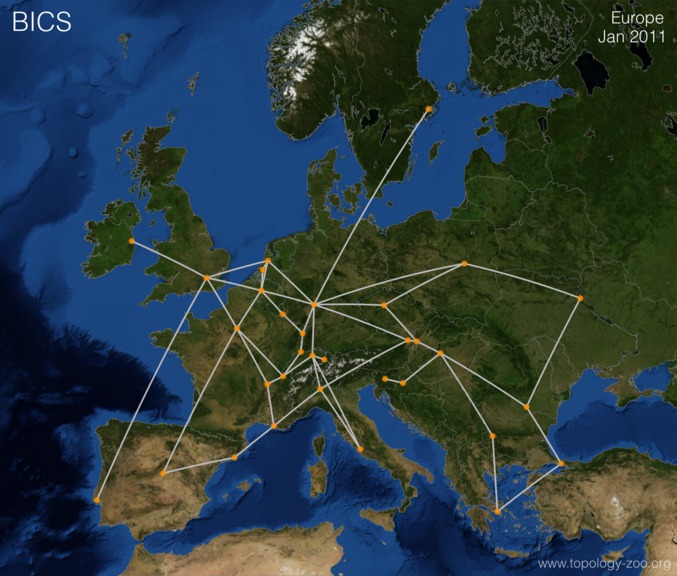}};
 \node at (0.3, 0){\includegraphics[page=1, scale=1]{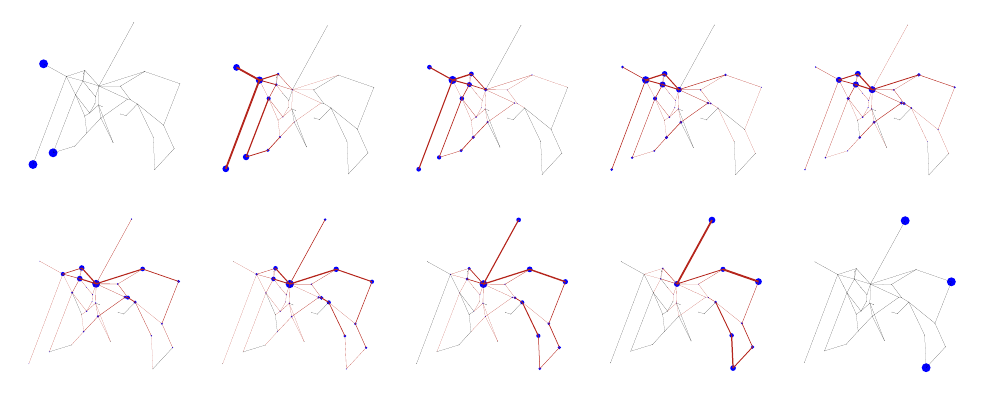}};
\end{tikzpicture}
}
\caption{Extraction of a discrete optimal transport geodesic.}
\label{fig:Internet}
\end{figure}

\section{Simulation of the gradient flow of the entropy}\label{sec:gradflow}

The entropy functional on $\Pc(\SetNodes)$ is given by
\begin{align*}
 \Entropy(\massNodes) = \sum_{x \in \SetNodes} \massNodes(x) \log(\massNodes(x)) \pi(x) \, .
\end{align*}
with the usual convention `$0\log0 = 0$'.  Maas \cite{Ma11} proved
that for the logarithmic mean $\theta_{\tn{log}}(\cdot,\cdot)$ and
$\massNodes \in \Pc(\SetNodes)$ the heat flow
$t \mapsto e^{t \triangle_\SetNodes} \massNodes$ is a gradient flow
trajectory for the entropy $\Entropy(\massNodes)$ with respect to the
discrete transportation distance $\Wc$.  In \cite{MaasDiscretePME2014}
it was shown that a similar result holds true for the Renyi entropy
\begin{align*}
 \Entropy_m(\massNodes) = \frac{1}{m-1} \sum_{x \in \SetNodes} \massNodes(x)^m \pi(x) \, . 
\end{align*}
In fact, for $m=\tfrac12$ and the gradient flow of $\Entropy_m$ with
respect to the metric $\Wc$ constructed with $\theta$ being the
geometric mean $\theta_{\tn{geom}}(\cdot,\cdot)$ is given by the
Fokker-Planck equation
$\partial_t \massNodes = \triangle_\SetNodes \massNodes^m$.

To verify this property numerically, we consider a line of five points with stationary distribution $\pi = \frac{1}{5}(1,2,2,2,1)$, Markov kernel $Q(x,y) = \frac{1}{10 \pi(x)}$ for $x,y$ adjacent, 
and initial mass $\massNodes = \frac{1}{10}(1,1,5,1,1)$.
Following \cite{JKO1998,AmGiSa08}, for an initial density $\massNodes_0 \in \Pc(\SetNodes)$ and a time step size $\tau > 0$ an implicit time-discrete gradient flow scheme for $\Entropy$ can be defined by
\begin{align}\label{eq:GradStep}
 \massNodes_{k+1} = \argmin_{\massNodes_B} \frac{1}{2} \W_h(\massNodes_{k},\massNodes_B)^2 + \tau \cdot  \Entropy(\massNodes_B)
\end{align}
with an inner time step size $h$ appearing in the discretization $\W_h$ of $\W$.
To minimize this functional numerically, we simultaneously carry out the external optimization over $\massNodes$ and the internal optimization within $\W_h$ . 
To this end, we define a discrete continuity equation with one free endpoint. For initial datum $\massNodes_A \in \Pc(\SetNodes)$ let
\begin{align*}
 \CE_h(\massNodes_A) = \left\{ (\massNodes_h, \momentum_h, \massNodes_B) \in  \FESpaceOneNodes \times \FESpaceZeroEdges \times \R^{\SetNodes} \,:\, (\massNodes_h,\momentum_h) \in \CE_h(\massNodes_A,\massNodes_B) \right\}\,.
\end{align*}
Analogous to \eqref{def:metricWithUAndV}, problem \eqref{eq:GradStep} can be written as
\begin{multline*}
 \min
 \; \{ \F(\massNodes_h,\momentum_h,\massEdges_h,\massVX_h,\massVY_h,\massElement_h, \massElementSlack_h,\massNodes_B) + 
 	\G(\massNodes_h,\momentum_h,\massEdges_h,\massVX_h,\massVY_h,\massElement_h, \massElementSlack_h,\massNodes_B)
 \,:\,  \\
 (\massNodes_h,\momentum_h,\massEdges_h,\massVX_h,\massVY_h,\massElement_h, \massElementSlack_h,\massNodes_B) \in \FESpaceOneNodes \times (\FESpaceZeroEdges)^4 \times (\FESpaceZeroNodes)^2 \times \R^{\SetNodes}
 \} 
\end{multline*}
with
\begin{align*}
 \F(\massNodes_h,\momentum_h,\massEdges_h,\massVX_h,\massVY_h,\massElement_h, \massElementSlack_h,\massNodes_B)  := &
  \ActionU(\massEdges_h, \momentum_h) 
  + \indicatorFct_{\SetJRho}(\massElementSlack_h,\massVX_h, \massVY_h)
  + \indicatorFct_{\SetJavg}(\massNodes_h,\massElement_h)
 + 2\,\tau \cdot \Entropy(\massNodes_B)\,, \\
  \G(\massNodes_h,\momentum_h,\massEdges_h,\massVX_h,\massVY_h,\massElement_h, \massElementSlack_h,\massNodes_B)   := &
  \indicatorFct_{\CE_h(\massNodes_k)}(\massNodes_h, \momentum_h, \massNodes_B)
  + \indicatorFct_{\SetK}(\massVX_h,\massVY_h,\massEdges_h )
  + \indicatorFct_{\SetJequal}(\massElement_h,\massElementSlack_h)\,.
\end{align*}
Again, this is amenable for algorithm \eqref{eq:ChambollePock}. 
We extend the space $H$ by a factor $\R^{\SetNodes}$ and adapt the scalar product on $H$ \eqref{eq:ScalarProductDiscrete} adding the term $h\,\InProdV{\massNodes_{B,1}(\cdot)}{\massNodes_{B,2}(\cdot)}$ with respect or the additional variable $\massNodes_B$. 
The proximal step of $\F^\ast$ then entails an additional proximal step of $(2\,\tau \cdot \Entropy)^\ast$ with respect to $h \NormV{\cdot}$
and in the proximal step of $G$ the projection onto $\CE_h(\massNodes_A,\massNodes_B)$ is replaced by a projection onto $\CE_h(\massNodes_k)$.
Next, we detail these modifications.

\newcommand{\EnergyWeight}{\gamma}
Let us recall that the proximal mapping of $(\EnergyWeight \cdot \Entropy)^\ast$ and $\EnergyWeight \cdot \Entropy$ are linked by Moreau's decomposition, cf.~\eqref{eq:ProjJDualMoreau}.
The computation of the the proximal mapping for $\EnergyWeight \cdot  \Entropy$ decouples in space and the resulting one dimensional problem can be solved via Newton's method.
This decoupling is possible since we do not enforce the constraint $\massNodes_B \in \Pc(\SetNodes)$ in the formulation of $\Entropy$ but enforce it via the discrete continuity equation constraint.

To implement the projection 
\begin{multline}
\proj_{\CE_h(\massNodes_A)}(\massNodes,\momentum,\massNodes_B) = 
  \argmin_{(\massNodes^\pr,\momentum^\pr,\massNodes^\pr_B) \in \CE_h(\massNodes_A)}
\frac{h}{2} \sum_{i=0}^N \NormV{\massNodes^\pr_h(t_i,\cdot)-\massNodes_h(t_i,\cdot)}^2 \\
+ \frac{h}{2} \sum_{i=0}^{N-1} \NormE{\momentum^\pr_h(t_i,\cdot)-\momentum_h(t_i,\cdot)}^2
+ \frac{h}{2} \NormV{\massNodes^\pr_B-\massNodes_B}^2
\label{eq:ProjCEFree}
\end{multline}
onto the set $\CE_h(\massNodes_A)$ of solutions of the discrete continuity equation with initial data $\massNodes_A$
the following modifications apply.
Analogous to Proposition \ref{prop:ProjCEDual}, a space time discrete elliptic equation
\begin{align*}\label{eq:ProjCELMFree}
 &\frac{ \LagrangeMultiplierCE_h(t_1,x) - \LagrangeMultiplierCE_h(t_{0},x) }{h^2} + \triangle_\SetNodes \LagrangeMultiplierCE_h(t_0,x)
  =  - \left( \frac{\massNodes_h(t_{1},x) - \massNodes_A(x)}{h} + \div \momentum_h(t_0,x) \right) \; , \\
 &\frac{ - \tfrac{3}{2} \LagrangeMultiplierCE_h(t_{N-1},x) - \LagrangeMultiplierCE_h(t_{N-2},x)}{h^2} + \triangle_\SetNodes \LagrangeMultiplierCE_h(t_{N-1},x)  \\
& \; \qquad\qquad\qquad\qquad \qquad\qquad\qquad = - \left( ( \frac{ \tfrac{1}{2}(\massNodes_B(x) + \massNodes_h(t_N,x)) - \massNodes_h(t_{N-1},x)}{h} + \div \momentum_h(t_{N-1},x) \right) \; ,\\
& \frac{\LagrangeMultiplierCE_h(t_{i+1},x) - 2 \LagrangeMultiplierCE_h(t_i,x) + \LagrangeMultiplierCE_h(t_{i-1},x) }{h^2} + \triangle_\SetNodes \LagrangeMultiplierCE_h(t_i,x)\\
&  \; \qquad\qquad\qquad\qquad \qquad\qquad\qquad  = - \left( \frac{\massNodes_h(t_{i+1},x) - \massNodes_h(t_i,x)}{h} + \div \momentum_h(t_i,x) \right)  \\
\end{align*}
with $i=1,\ldots,N-2$ and $x \in \SetNodes$
has to be solved for the Lagrange multiplier $\LagrangeMultiplierCE_h \in \FESpaceZeroNodes$.
Note that this system is no longer degenerate due to the additional freedom of $\massNodes_B$ and thus no regularization as before is required.
Then the solution $(\massNodes^\pr,\momentum^\pr,\massNodes^\pr_B)$ to \eqref{eq:ProjCEFree} is given by
\begin{align*}
 \massNodes_B^\pr(x) &= \frac{1}{2} \left( \massNodes_h(t_N,x) + \massNodes_B(x) - \frac{\LagrangeMultiplierCE_h(t_{N-1},x)}{h}  \right) \, , \\
 \massNodes^\pr_h(t_i,x)  & = \massNodes_h(t_i,x) + \frac{\LagrangeMultiplierCE_h(t_i,x) - \LagrangeMultiplierCE_h(t_{i-1},x)}{h}   \, , \\
 \massNodes^\pr_h(t_0,x)  & = \massNodes_A(x) \, , \; \massNodes^\pr_h(t_N,x) = \massNodes_B^\pr(x) \, , \\
 \momentum^\pr_h(t_i,x,y) &= \momentum_h(t_i,x,y) + \nabla_\SetNodes \LagrangeMultiplierCE_h(t_i,x,y)    
\end{align*}
for all  $i=1,\ldots,N-2$ and $x,y \in \SetNodes$.
\begin{figure}
\resizebox{1.0\textwidth}{!}{
\begin{tikzpicture}[x=\paperwidth,y=0.3\paperwidth]
\node at (0, 0){\includegraphics[page=1, scale=1]{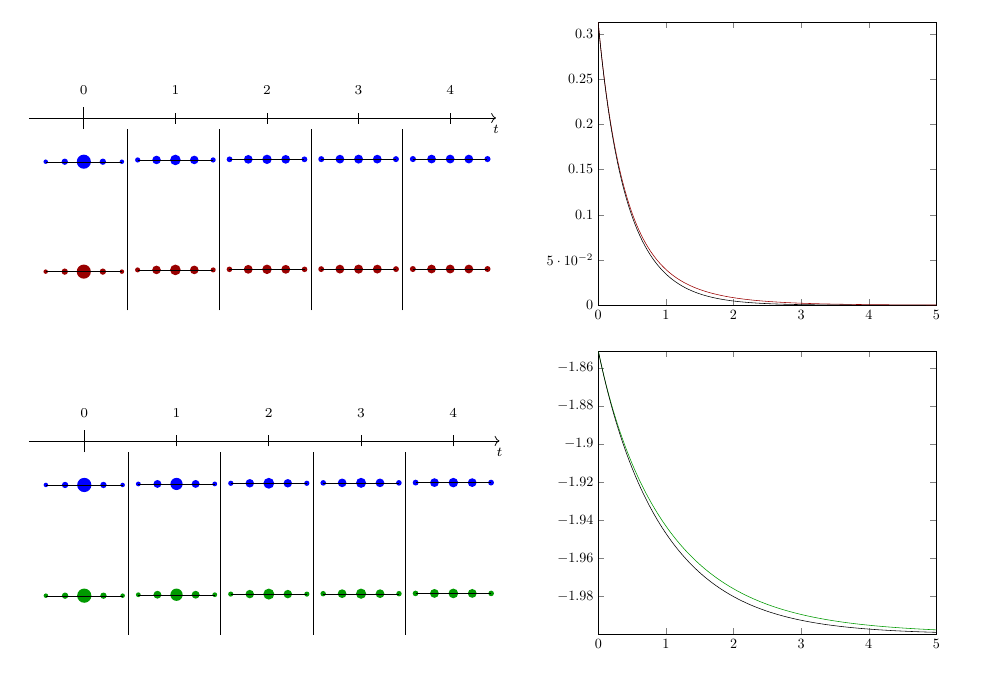}};
\end{tikzpicture}
}
\caption{Numerical solution of the heat flow (top) and  the Fokker-Planck equation (bottom) based on an explicit Euler scheme (blue) with time step size $10^{-3}$
and for the gradient flow of the associated entropy using the logarithmic mean (red) and the geometric mean (green), respectively, with $\tau=10^{-3}$ and $h=100$.
Panels on the left show the mass distributions on the graph at different times, panels on the right show the values of the entropies over time. }
\label{fig:heatLine}
\end{figure}
In Figure~\ref{fig:heatLine} we compare the numerical results for this natural discretization of the gradient flow of the entropy to 
the flow computed numerically with a simple explicit Euler discretization applied to the heat equation and the Fokker-Planck equation, respectively, with respect to the underlying Markov kernel.

\bibliographystyle{alpha}
\bibliography{all,own,library,local}

\end{document}